\newtheorem{theorem}[equation]{Theorem}
\newtheorem{proposition}[equation]{Proposition}
\newtheorem{lemma}[equation]{Lemma}
\theoremstyle{definition}
\newtheorem{definition}[equation]{Definition}
\theoremstyle{remark}
\newtheorem{remark}[equation]{Remark}
\makeatletter\@addtoreset{equation}{section} \makeatother
\newtheoremstyle{dotless}{}{}{\rm}{}{\sc}{}{ }{}
\theoremstyle{dotless}
\author{Karim R\'ega}
\title{Moduli of Anti-Invariant Higgs Bundles}
\address{\emph{Karim R\'ega}
\newline
\textnormal{School of Mathematics and Maxwell Institute, The University of Edinburgh,  Edinburgh EH9 3JZ, UK.}
\newline
\textnormal{\texttt{karim.rega@ed.ac.uk}}}
\newcommand{\A}{\mathbb{A}}
\newcommand{\AC}{\mathbb{A}^1 \times C}
\newcommand{\C}{\mathbb{C}}
\newcommand{\Z}{\mathbb{Z}}
\newcommand{\Gm}{\mathbb{G}_m}
\newcommand{\End}{\text{End}}
\newcommand{\Ann}{\text{Ann}}
\newcommand{\Spec}{\text{Spec}}
\newcommand{\Bunantisym}{\text{Bun}^{\sigma, +}_{\text{SL}_{n}(C)}}
\newcommand{\HBunantisym}{\text{HBun}^{\sigma, +}_{\text{SL}_n(C)}}
\newcommand{\HBunantisymss}{\text{HBun}^{\sigma, + ; \text{ss}}_{\text{SL}_n(C)}}
\newcommand{\Bunantialt}{\text{Bun}^{\sigma, \tau }_{\text{SL}_n(C)}}
\newcommand{\HBunantialt}{\text{HBun}^{\sigma, \tau}_{\text{SL}_n(C)}}
\newcommand{\HBunantialtss}{\text{HBun}^{\sigma, \tau ; \text{ss}}_{\text{SL}_n(C)}}
\newcommand{\HBun}{\text{HBun}(C)}
\newcommand{\HBunrd}{\text{HBun}_{\text{r,d}}(C)}
\newcommand{\HBunrdss}{\text{HBun}_{\text{r,d}}^{ss}(C)}
\newcommand{\HBunsl}{\text{HBun}_{\text{SL}_n}(C)}
\newcommand{\HBunslss}{\text{HBun}^{\text{ss}}_{\text{SL}_n}(C)}
\newcommand{\gmssym}{M^{\sigma, +}_n}
\newcommand{\gmsalt}{M^{\sigma, \tau}_n}
\newcommand{\Bun}{\text{Bun}(C)}
\newcommand{\Bunsl}{\text{Bun}_{\text{SL}_n}(C)}
\newcommand{\HCoh}{\text{HCoh}(C)}
\newcommand{\Coh}{\text{Coh}(C)}
\newcommand{\wt}{\text{wt}_{\mathbb{G}_m}}
\newcommand{\Ldet}{\mathcal{L}_{\text{det}}}
\newcommand{\QCoh}{\text{QCoh}}
\begin{document}

\begin{abstract}
We study the moduli of anti-invariant Higgs bundles as introduced by Zelaci. Using recent existence results of Alper, Halpern-Leistner and Heinloth we establish the existence of a separated good moduli space for semistable anti-invariant Higgs bundles. Along the way this produces a non-GIT proof of the existence of a separated good moduli space for semistable Higgs bundles. We also prove the properness of the Hitchin system in this setting.
\end{abstract}

\maketitle

\section{Introduction}

\subsection{Motivation}

Higgs bundles were introduced by Hitchin in \cite{Hitchin1} \cite{Hitchin2}, where it was shown that there exists a moduli space of stable Higgs bundles, and that this admits a morphism, often called the Hitchin system, to an affine space. A Higgs bundle consists of a vector bundle $E$ on a curve $c$ and a section $\varphi \in H^0(C, \End(E) \times K_C)$. The Hitchin system can be interpreted as giving the coefficients of the characteristic polynomial of $\varphi$. It was furthermore proven that this morphism is projective (if the rank and degree are coprime), and with a natural symplectic structure on $M^s$, forms a completely algebraically integrable system. While the results of Hitchin could be interpreted in terms of algebraic geometry, this construction was gauge-theoretic in nature.

Nitsure \cite{nitsure1991moduli} and Simpson \cite{Simpson1} \cite{Simpson2} extended this futher, by constructing a moduli space of semistable Higgs bundles with algebro-geometric techniques, which ensures that the Hitchin fibration is always projective. Nitsure moreover extended this construction to the case where $\varphi \in H^0(C, \End(E) \otimes L)$ where $L$ is an arbitrary line bundle on $C$. 

This story can be generalised in many directions, one of which is to look at parabolic Higgs bundles as introduced by Simpson \cite{Simpson3}. Parabolic bundles were introduced by Mehta and Seshadri in \cite{MehtaSeshadri}. Given a reduced divisor $D$, a parabolic bundle consists of a vector bundle $E$ together with a flag in the fiber $E_p$ for $p \in D$. This can be extended to Higgs bundles by also considering a Higgs field $\varphi \in H^0(C, \End(E) \otimes L(D))$ that is compatible with the flags at points of $D$. This compatibility can be understood in two ways, and this leads to either a symplectic or a Poisson structure respectively. The first is a Higgs field nilpotent with respect to the flag, often called a strongly parabolic Higgs field, studied in \cite{Konno}. The second case is preserving the flag, often called the weakly parabolic Higgs field, and these are studied in \cite{Yokogawa} and \cite{LogaresMartens}. There is a natural principal bundle analog, where one considers a principal $G$-bundle for a reductive group $G$ together with reductions to parabolic subgroups of a specific type at $p \in D$, and the Higgs field is a section of the adjoint bundle (tensored with $L(D)$) satisfying some compatibility.

This perspective leads to a generalisation to parahoric group schemes. Parahoric groups are certain subgroups of $G(K)$ for $K$ the fraction field of a DVR $R$, that behave similarly to parabolic subgroups of reductive groups over algebraically closed fields. One example is the inverse image of a parabolic subgroup under the reduction morphism $G(R) \rightarrow G(k)$, where $k$ is the residue field of $R$. These were studied in \cite{BruhatTits1} and \cite{BruhatTits2} and in particular, it was shown that for any parahoric group $P$, there exists a group scheme $\mathcal{G}$ over $\Spec R$ such that $\mathcal{G}(R) = P$ and $\mathcal{G}|_K = G \times \Spec K$. 

Using this, one can construct group schemes $\mathcal{G}$ over the curve $C$ such that the generic fiber is $G$, but at points $p \in D$, the group scheme over a formal neighborhood of $p$ is of the type prescribe above. In the case where all parahoric subgroups are derived from parabolic ones, torsors for these group schemes correspond to parabolic bundles. These group schemes and their torsors were considered in \cite{pappas2010some} and \cite{heinloth2010uniformization}. There is distinction between split parahoric group schemes and non-split ones, for the former one has $\mathcal{G}|_{C \backslash D} \cong G \times (C \backslash D)$. These group schemes and their torsors were considered in \cite{pappas2010some} and \cite{heinloth2010uniformization}.

Given such group schemes, one can then consider again Higgs bundles  for them, given by a torsor $\mathcal{E}$ for these group schemes and a section $\varphi \in H^0(C, \text{ad}(\mathcal{E}) \otimes L(D))$. Studying these parahoric Higgs bundles form the main motivation for this work. These were studied before in the strong case in \cite{BaragliaKamgarpourVarma}, where it was shown that the Hitchin system here forms a completely integrable system. The weak case was considered in \cite{kydonakissunZhao} where a moduli space of semistable parahoric Higgs bundles was constructed using GIT and it was shown that this moduli space has a Poisson structure. Both these results deal only with the case of split parahoric group schemes, whereas the objects of interest in this paper correspond to a non-split parahoric group scheme. Our approach in this paper differs from the above results, as will be further expanded below, by using stack-theoretic results from \cite{alper2023existence} to prove existence and properness of these moduli spaces. Even in the case of ordinary Higgs bundles, this is new to the literature.

In \cite{balaji2010moduli}, it is shown that given a split parahoric group scheme $\mathcal{G}$ with generic fiber $G$, there exists a cover $\tilde{C} \rightarrow C$ with Galois group $\Gamma$ such that the ramification divisor is $D$, and an action of $\Gamma $ on $G$ such that $\mathcal{G}$-bundles correspond to $\Gamma$-equivariant $G$-bundles on $\tilde{C}$. This was extended to non-split schemes in \cite{DamioliniHong}. This allows to translate the study of parahoric Higgs bundles to simpler objects on the cover.

One example of this is anti-invariant bundles, which correspond to a $\Gamma = \Z / 2Z$-cover and the action on $\text{SL}_n$ corresponding to $g \mapsto ^{t}{g^{-1}}$. These objects were studied in detail by Zelaci in \cite{zelaci2019moduli} \cite{zelaci2017moduli} \cite{hacen2022hitchin} and in the arithmetic setting by Laumon and Ng\^{o} in \cite{LaumonNgo}. This notion also generalises the Prym variety from line bundles to higher rank vector bundles.

If $C$ is a curve with an involution $\sigma$, then an anti-invariant Higgs bundle is a triple $(E, \psi, \varphi)$ consisting of a vector bundle $E$ (of trivial determinant), an isomorphism $\psi : \sigma^* E \rightarrow E^*$ and a Higgs field $\varphi \in H^0(E, \End(E) \otimes L)$, where $L$ is an arbitrary anti-invariant line bundle on $C$, i.e. $\sigma^* L \cong L^*$. The isomorphism $\psi$ and the Higgs field $\varphi$ are also required to satisfy the compatibility condition of Diagram \ref{eq:antiinvHiggs}.

Another parallel motivation for this work, is as an application of the beyond GIT formalism developed in \cite{halpern2014structure} and \cite{alper2023existence}. In the latter, criteria are given for when a proper good moduli space of a stack exists, as introduced in \cite{alper2013good}. This reduces to the verification of two valuative criteria, which both look like a filling condition of a stack of the form $[\Spec S / \Gm]$ for over a codimension two point $O$ which is fixed for the $\Gm$-action. In the first case, we have $S = R[t]$ for $R$ a DVR and weight 1 on $t$. In the second case, $S = R[x,y]/ (xy - \pi)$, where $\pi$ is a uniformiser of $R$, and $x,y$ have weights 1 and -1 respectively.

\subsection{Main results}

Our first main result, Theorem \ref{th:gmsexists} shows the existence of a good moduli space for semistable anti-invariant Higgs bundles. This is done with the techniques of \cite{alper2023existence}. Our strategy consists of first showing that these criteria hold for ordinary Higgs bundles, and then showing that given this, we can also extend the isomorphism. Along the way, this also produces a stacky proof of the existence of the good moduli space of semistable Higgs bundles, without resorting to GIT, Theorem \ref{th:gmsH}, which is new to the literature as far as the author is aware. 

The second main result, Theorem \ref{th:Hitchinproper}, shows that the Hitchin system for this setting is a proper morphism. This is also done using results of \cite{alper2023existence} that allow us to reduce to showing that the morphism from the stack of anti-invariant Higgs bundles to the Hitchin base $B$ satisfies the existence part of the valuative criterion of properness. This is then shown by essentially combining the known corresponding result for the stacks of Higgs bundles and anti-invariant bundles.    

\subsection{Outline}

In Section 2 and 3 of this paper, we review the definitions of (stacks of) Higgs bundles and their anti-invariant counterpart, and establish some of the basic results about them. Section 4 is concerned with analysing filtrations of (anti-invariant) Higgs bundles, or maps from $[\A^1 / \Gm]$. This serves the dual purpose of motivating the definition of semistability for anti-invariant Higgs bundles and constructing a $\Theta$-stratification of the stack of anti-invariant Higgs bundles, which is later used to simplify the proof of properness of the Hitchin system. Section 5 summarizes the existence result of \cite{alper2023existence}, together with the valuative criteria introduced there. These are applied in Section 6 to the stack of semistable Higgs bundles and in Section 7 to the anti-invariant case. Finally, the properness of the Hitchin system is proven in Section 8.

\subsection{Notation}

Throughout the paper, $C$ will always denote a smooth projective curve of genus $g \geq 2$ that comes equipped with an involution $\sigma$, and $L$ is a line bundle on $C$ with $\deg(L) \geq 2g -2 $, which apart from in Section 2 and 6, is assumed to be anti-invariant, i.e. $\sigma^*L \cong L^*$.

\subsection{Acknowledgements}

This work has benefited from discussions with Jarod Alper and Jochen Heinloth. I also thank Johan Martens for introducing me to this topic, and advising me throughout. 

\section{Ordinary Higgs bundles}

\subsection{Definition + necessary properties}

 Given a smooth projective curve $C$, a \textbf{Higgs bundle} on $C$ is a vector bundle $E$ on $C$ together with an endomorphism $\varphi \colon E \rightarrow E \otimes L$. The endomorphism $\varphi$ is called the \textbf{Higgs field}. We  will mostly be interested in the case of Higgs bundles with trivial determinant or $\text{SL}_r$-Higgs bundles. These are given by a Higgs bundle $(E, \varphi)$ together with a trivialisation of the determinant $\delta \colon \Lambda^r E \rightarrow \mathcal{O}_C$ such that $\varphi$ is trace-free.

\begin{definition}
    For a scheme $S$, we define $\HBun(S)$ as the groupoid of coherent sheaves $\mathcal{E}$ on $C \times S$ such that for each $s \in S$ the fiber $\mathcal{E}_s$ is a vector bundle on $C$ together with a morphism $\varphi \colon \mathcal{E} \rightarrow \mathcal{E} \otimes \pi^*L$, where $\pi: C \times S \rightarrow C$ is the projection onto the first factor. This defines the \textbf{stack of Higgs bundles on $C$}. By fixing the rank $r$ and degree $d$ in this definition, we obtain a connected stack that we denote by $\HBunrd$.

    Considering the groupoid of Higgs bundles together with a trivialisation of the determinant of $\mathcal{E}$ and a trace free Higgs field defines the \textbf{stack of Higgs bundles with trivial determinant} $\HBunsl$.
\end{definition}

These are all algebraic stacks, locally of finite type, see for example \cite[Theorem 7.18]{casalaina2018introduction}. 

\begin{proposition}
\label{prop:bunaffdiag}
    The stacks $\HBun \text{ and } \HBunsl$ have affine diagonal.
\end{proposition}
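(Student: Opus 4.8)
The plan is to reduce the affineness of the diagonal of $\HBun$ to the well-known fact that the moduli stack of coherent sheaves (or vector bundles) $\Coh(C)$ has affine diagonal, and then to handle the Higgs field as an extra affine condition. Concretely, the diagonal of $\HBun$ is the map $\HBun \to \HBun \times \HBun$, and its fibre over a point corresponding to a pair $((E_1,\varphi_1),(E_2,\varphi_2))$ over a base $S$ is the scheme of isomorphisms $f\colon E_1 \xrightarrow{\sim} E_2$ that intertwine the Higgs fields, i.e. $(f\otimes \mathrm{id}_L)\circ \varphi_1 = \varphi_2 \circ f$. So I would first recall that $\underline{\mathrm{Isom}}_S(E_1,E_2)$ is affine over $S$: it is an open subscheme of the linear scheme $\underline{\mathrm{Hom}}_S(E_1,E_2) = \mathbf{V}(\pi_{S*}\mathcal{H}om(E_1,E_2))$ cut out by invertibility of the determinant — more precisely it is the complement of a Cartier-type closed condition, but one checks it is in fact affine, being the vanishing locus inside $\underline{\mathrm{Hom}}(E_1,E_2)\times_S\underline{\mathrm{Hom}}(E_2,E_1)$ of the two equations $gf = \mathrm{id}$, $fg=\mathrm{id}$, a closed subscheme of an affine $S$-scheme. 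This gives affine diagonal for $\Coh(C)$ and hence for $\mathrm{Bun}_{r,d}(C)$.

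Next I would impose the Higgs compatibility. Inside $\underline{\mathrm{Isom}}_S(E_1,E_2)$, the locus where $(f\otimes\mathrm{id}_L)\circ\varphi_1 = \varphi_2\circ f$ is a closed subscheme: it is the preimage of the zero section under the morphism $\underline{\mathrm{Isom}}_S(E_1,E_2)\to \underline{\mathrm{Hom}}_S(E_1,E_2\otimes L)$, $f\mapsto (f\otimes\mathrm{id}_L)\circ\varphi_1-\varphi_2\circ f$, and the target is a linear, hence separated, $S$-scheme, so the zero section is a closed immersion and its preimage is closed in an affine $S$-scheme, therefore affine over $S$. This shows that the diagonal of $\HBun$ is representable by affine morphisms. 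The argument is entirely parallel for $\HBunsl$: one further restricts to isomorphisms compatible with the fixed trivialisations of the determinants, which is again a closed condition (the induced map $\Lambda^r E_1 \to \Lambda^r E_2$ must equal the prescribed isomorphism), cutting out a closed subscheme of the affine scheme already produced.

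The only genuinely delicate point is the existence and affineness of $\underline{\mathrm{Hom}}_S(E_1,E_2)$ as a scheme over $S$ when $E_1, E_2$ are merely $S$-flat families of coherent sheaves on $C$ that are vector bundles on each fibre — i.e. the representability of the Hom functor. This is standard: since $\pi_S\colon C\times S\to S$ is proper and flat and $\mathcal{H}om(E_1,E_2)$ is $S$-flat with proper support, one can use cohomology and base change / the theory of the "linear scheme'' $\mathbf{V}(\mathcal{F})=\mathrm{Spec}\,\mathrm{Sym}\,\mathcal{F}$ attached to a coherent sheaf representing the pushforward functor, as in Grothendieck's construction of $\underline{\mathrm{Hom}}$; one may also simply cite the corresponding statement for $\Coh(C)$, whose diagonal is affine by \cite[Theorem 7.18]{casalaina2018introduction} or standard references. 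I expect that in the write-up this will be dispatched by a citation, and the new content is purely the observation that the Higgs-compatibility and trace/determinant conditions are closed, hence preserve affineness.
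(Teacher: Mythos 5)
Your proposal is correct and follows essentially the same route as the paper: both reduce to the affineness of the isomorphism scheme of the underlying bundles (i.e.\ the affine diagonal of $\Bun$) and then observe that compatibility with the Higgs fields, and with the determinant trivialisations in the $\text{SL}_r$ case, cuts out a closed subscheme of that affine $S$-scheme. The only difference is that you spell out why the Isom scheme is affine and why the compatibility locus is closed (as the preimage of the zero section of a linear scheme), details the paper dispatches by citing the affine diagonal of $\Bun$ and by asserting closedness directly.
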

\begin{proof}
    We need to show that for a morphism $(a,b) \colon S \rightarrow \HBun\times \HBun$ the algebraic space $\underline{\text{Isom}}_{\HBun(S)}(a,b)$ is a scheme affine over $S$. The morphisms $a,b$ correspond to Higgs bundles $(E_a, \varphi_a), (E_b, \varphi_b)$ on $C \times S$ and elements of $\underline{\text{Isom}}_{\HBun(S)}(a,b)$ correspond to vector bundle isomorphisms $\psi \colon E_a \rightarrow E_b$ that commute with the Higgs fields, i.e. the following diagram is commutative 

\begin{equation} \label{eq:isohiggs}
    \begin{tikzcd}
	{E_a} && {E_a \otimes \pi^* L} \\
	\\
	{E_b} && {E_b \otimes \pi^* L.}
	\arrow["{\varphi_a}", from=1-1, to=1-3]
	\arrow["{\varphi_b}", from=3-1, to=3-3]
	\arrow["\psi"', from=1-1, to=3-1]
	\arrow["{\psi \otimes \text{id}_{\pi^*L}}", from=1-3, to=3-3]
\end{tikzcd}
\end{equation}
        
    There  is a natural forgetful morphism $\HBun \rightarrow \Bun$ from the stack of Higgs bundles on $C$ to the stack of vector bundles on $C$ forgetting the Higgs field. This gives rise to a morphism $(a',b'): S \rightarrow \Bun \times \Bun$, which corresponds to the vector bundles $E_a,E_b$. Elements of $\underline{\text{Isom}}_{\Bun(S)}(a',b')$ are vector bundle isomorphisms $f: E_a \rightarrow E_b$. 
    
    Because $\Bun$ has affine diagonal, we know that $\underline{\text{Isom}}_{\Bun(S)}(a',b')$ is a scheme affine over $S$, and it is easy to see from the above interpretation of its elements that $\underline{\text{Isom}}_{\HBun(S)}(a,b)$ is a subscheme. The condition of commuting in Diagram \ref{eq:isohiggs} with the Higgs fields is a closed one, so it follows that $\underline{\text{Isom}}_{\HBun(S)}(a,b)$ is an affine scheme over $S$ and $\HBun$ has affine diagonal.

    For $\HBunsl$ the result follows using a similar description of the isomorphisms as vector bundle isomorphisms that commute with both the Higgs fields and the respective trivialisations of the determinant.
\end{proof}

We will make use of the following enlargement of the stack of Higgs bundles.

\begin{definition}
For a scheme $S$, we define $\HCoh(S)$ as the groupoid of coherent sheaves $\mathcal{E}$ on $C \times S$ together with a morphism $\varphi \colon\mathcal{E} \rightarrow \mathcal{E} \otimes \pi^*L$. This defines the \textbf{stack of Higgs sheaves on $C$}, denoted by \HCoh.
\end{definition}

The stack $\HCoh$ is algebraic, locally of finite type. There is a natural forgetful morphism $\HCoh \rightarrow \Coh$ to the stack of coherent sheaves on $C$ forgetting the Higgs field. Because $\HBun$ is the preimage of $\Bun$ under this morphism, it follows that $\HCoh$ contains $\HBun$ as an open substack.

\begin{proposition}\label{prop:Hcohaff}
    $\HCoh$ is affine over $\Coh$.
\end{proposition}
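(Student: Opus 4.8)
The plan is to exhibit $\HCoh$ as a relative affine scheme over $\Coh$ by identifying it with the relative spectrum of a quasi-coherent sheaf of algebras built from the data of the Higgs field. The key observation is that for a fixed coherent sheaf $\mathcal{E}$ on $C \times S$, the datum of a Higgs field $\varphi \colon \mathcal{E} \to \mathcal{E} \otimes \pi^* L$ is simply an element of the $\mathcal{O}_S$-module $\mathrm{Hom}_{C \times S}(\mathcal{E}, \mathcal{E} \otimes \pi^* L)$, with no further conditions imposed (in contrast to $\HBun$, there is no open condition singling out vector bundles, and no equations beyond $\mathbb{C}$-linearity of $\varphi$). So morally $\HCoh \to \Coh$ is a "vector bundle stack" — a torsor-free affine morphism whose fibers are affine spaces (of varying, possibly infinite-type, dimension).

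First I would recall the universal coherent sheaf $\mathcal{E}^{\mathrm{univ}}$ on $C \times \Coh$ and consider the functor on $\Coh$-schemes $T \to \Coh$ (corresponding to a coherent sheaf $\mathcal{E}_T$ on $C \times T$ flat over $T$ with vector-bundle fibers replaced by the general coherent condition as in the definition) sending $T$ to the set of morphisms $\varphi \colon \mathcal{E}_T \to \mathcal{E}_T \otimes \pi^* L$. By adjunction this is $\mathrm{Hom}_{\mathcal{O}_T}\big(\mathcal{O}_T, \, (p_T)_* \mathcal{H}om_{C \times T}(\mathcal{E}_T, \mathcal{E}_T \otimes \pi^* L)\big)$, where $p_T \colon C \times T \to T$ is the projection. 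To make this representable by an affine $T$-scheme I would invoke the standard fact that for a morphism $p_T$ that is projective and a coherent sheaf $\mathcal{F}$ on the source which is $T$-flat, the functor $T' \mapsto \Gamma(C \times T', \mathcal{F}_{T'})$ on $T$-schemes is representable by an affine scheme over $T$ — concretely, writing $R^0(p_T)_* \mathcal{F}$ via a two-term complex $[\,M^0 \to M^1\,]$ of vector bundles (cohomology and base change), the functor of global sections is the kernel, an affine scheme, in fact a closed subscheme of the total space $\mathbb{V}(M^0)$ cut out by linear equations. Applying this with $\mathcal{F} = \mathcal{H}om(\mathcal{E}_T, \mathcal{E}_T \otimes \pi^* L)$ (which is $T$-flat since $\mathcal{E}_T$ is, as its fibers are torsion-free or at worst have bounded local structure — here one uses that the fibers $\mathcal{E}_s$ are vector bundles in the $\HBun$ case, but for $\HCoh$ one works with the full universal family and checks flatness of the $\mathcal{H}om$ sheaf by the local criterion) gives the result.

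Assembling this over a smooth atlas of $\Coh$ and checking that the construction is compatible with base change (which it is, because cohomology and base change applies and the two-term-complex presentation is functorial), one obtains that $\HCoh \to \Coh$ is representable by affine morphisms, i.e. $\HCoh = \mathrm{Spec}_{\Coh}(\mathcal{A})$ for a quasi-coherent sheaf of $\mathcal{O}_{\Coh}$-algebras $\mathcal{A}$ (the symmetric algebra on the dual of the relevant pushforward, modulo the linear relations). Alternatively, and perhaps more cleanly, one can phrase the whole argument as: $\HCoh$ is the total space of the (derived, but here honestly a two-term) complex $R\,\mathcal{H}om_{p}(\mathcal{E}^{\mathrm{univ}}, \mathcal{E}^{\mathrm{univ}} \otimes \pi^* L)$ over $\Coh$, truncated in degree $0$, which is an affine morphism.

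The main obstacle I anticipate is the flatness bookkeeping: to apply cohomology-and-base-change one needs $\mathcal{H}om_{C \times T}(\mathcal{E}_T, \mathcal{E}_T \otimes \pi^* L)$ to be flat over $T$, and for a family of genuinely coherent (not locally free) sheaves this is not automatic and the formation of $\mathcal{H}om$ need not commute with base change. The clean fix is to first reduce to the locally free case relevant to $\HBun$ — where $\mathcal{E}_T$ is $T$-flat with vector-bundle fibers, so $\mathcal{H}om(\mathcal{E}_T, \mathcal{E}_T \otimes \pi^* L) \cong \mathcal{E}_T^\vee \otimes \mathcal{E}_T \otimes \pi^* L$ is visibly $T$-flat and compatible with base change — and then, for the statement about $\HCoh$ in full generality, to either (a) stratify $\Coh$ so that on each stratum one has enough regularity, or (b) observe that one may resolve $\mathcal{E}_T$ by a finite complex of vector bundles on $C \times T$ and compute $R\mathcal{H}om$ through that resolution, reducing again to the locally free computation. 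Either route turns the potential difficulty into a routine, if slightly tedious, verification.
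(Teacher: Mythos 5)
Your proposal is correct and follows essentially the same route as the paper, whose entire proof is the observation that the fibre of $\HCoh \to \Coh$ over a family $\mathcal{E}$ on $C \times S$ is the space of sections $H^0(\End(\mathcal{E}) \otimes \pi^* L)$, hence affine over $S$. You supply the representability and base-change details (two-term presentation by vector bundles, and the resolution trick to handle non-locally-free fibres where $\mathcal{H}om$ need not commute with base change) that the paper leaves entirely implicit.
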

\begin{proof}
    If $S \rightarrow \Coh$ is a morphism from a scheme $S$ corresponding to a family of coherent sheaves $\mathcal{E}$, the fiber product of this morphism with $\HCoh$ is given by $H^0(\End(\mathcal{E}) \otimes \pi^* L)$. It follows that the morphism $\HCoh \rightarrow \Coh$ is affine.
\end{proof}

\section{Anti-invariant Higgs Bundles}

From now on we assume $C$ is equipped with an involution $\sigma$. Anti-invariant vector bundles on such a curve were studied by Zelaci in \cite{zelaci2019moduli}, \cite{zelaci2017moduli} and \cite{hacen2022hitchin}. 

\begin{definition}
An \textbf{anti-invariant vector bundle} $(E, \psi, \delta)$ is a vector bundle $E$ on $C$ together with a trivialisation of the determinant $\delta$ and an isomorphism
\begin{equation*}
    \psi \colon \sigma^* E \rightarrow E^*.
\end{equation*}
The isomorphism $\psi$ is said to be
\begin{enumerate}
    \item \textbf{$\sigma$-symmetric} if $^{t}{\sigma^* \psi} = \psi.$
    \item  \textbf{$\sigma$-alternating} if $^{t}{\sigma^* \psi} = -\psi.$ The \textbf{type} $\tau$ of a $\sigma$-alternating vector bundle is defined as 
    \begin{equation*}
        \tau = (\tau_p)_{p \in R} \mod \pm 1
    \end{equation*}
    where $R$ is the ramification divisor of $\sigma$ and $\tau_p \in \lbrace \pm 1 \rbrace$ is the Pfaffian of the isomorphism $\psi \colon (\sigma^* E)_p \rightarrow (E^*)_p$.
\end{enumerate}
\end{definition} 

\begin{remark}\ 
\begin{enumerate}
    \item As outlined in \cite[Section 3.2.1]{hacen2022hitchin}, if $E$ is a stable vector bundle, then the isomorphism $\psi$ is unique up to scalar multiplication. This implies that $^{t}{\sigma^*\psi} = \pm \psi$, so such bundles are always either $\sigma$-symmetric or $\sigma$-alternating.
    \item We are mainly interested in the $\sigma$-symmetric and $\sigma$-alternating cases, but the results discussed throughout the rest of this paper hold without any restrictions on the isomorphism $\psi$.
    \item From now on, we will not always write out the trivialisation $\delta$, but it will always be implicitly assumed.
\end{enumerate}
\end{remark}

\begin{definition}
    We define the \textbf{stack of $\sigma$-symmetric anti-invariant bundles} $\Bunantisym$ (resp. \textbf{$\sigma$-alternating anti-invariant bundles} $\Bunantialt$) by assigning to  a scheme $S$ the groupoid consisting of a coherent sheaf $\mathcal{E}$ on $C \times S$ such that for each $s \in S$ the sheaf $\mathcal{E}_s$ is a vector bundle, a trivialisation $\delta$ of the determinant of $\mathcal{E}$ and a $\sigma$-symmetric (resp. $\sigma$-alternating of type $\tau$) isomorphism $\psi: \sigma^* \mathcal{E} \rightarrow \mathcal{E}^*$.
\end{definition}

Let $\mathcal{G}$ be a group scheme on $C$ with a lift of the involution $\sigma$. A $(\mathcal{\sigma}, \mathcal{G})$-bundle as considered in \cite[Definition 2.2.3]{balaji2010moduli} is given by a $\mathcal{G}$-torsor $E$ on $C$ and a lift of the action of $\sigma$ to $E$ such that for $p \in E$ and $g \in \mathcal{G}$ we have $\sigma(g \cdot p) = \sigma(g) \cdot \sigma(p).$ It is shown in \cite[Proposition 2.4]{zelaci2017moduli} that anti-invariant vector bundles on $C$ correspond to $(\sigma, \mathcal{G})$-bundles for various group schemes $\mathcal{G}.$ For $\sigma$-symmetric bundles the associated group scheme is the constant group scheme $\text{SL}_r \times C$ with action $\sigma(g, x) = (^{t}{g^{-1}})$. For $\sigma$-alternating vector bundles, there is a very analogous construction with a slightly more complicated group scheme (that is still \'etale locally isomorphic to $\text{SL}_r$). 

Furthermore, in \cite[Theorem 4.1.5]{balaji2010moduli} it is shown that $(\sigma, \mathcal{G})$-bundles correspond to torsors for the group scheme $\mathcal{H} = \text{Res}_{C / C'}(\mathcal{G})^\sigma$ of $\sigma$-invariants in the Weil restriction of the group scheme to the quotient $C' = C / \sigma$. In the case of anti-invariant bundles, it is also shown in \cite[Proposition 2.4]{zelaci2017moduli} that this Weil restriction is a parahoric group scheme as defined in \cite{pappas2010some}. It subsequently follows from \cite[Proposition 2.1]{heinloth2010uniformization} that $\Bunantisym$ and $\Bunantialt$ are smooth algebraic stacks, locally of finite type. 

\begin{remark}
    Anti-invariant vector bundles exhibit some analogies with symplectic and orthogonal vector bundles. If $p$ is a fixed point of $\sigma$, it is shown that the fiber of the parahoric group schemes over $p$ is an extension of $\text{SO}_r$ in the $\sigma$-symmetric case, and $\text{Sp}_r$ for $\sigma$-alternating.  
\end{remark}

From now on, we also assume that the line bundle is anti-invariant with a fixed isomorphism, i.e. $\sigma^*L \cong L$. This assumption can be made if $L$ is the canonical bundle, for example. 

\begin{definition}
    An \textbf{anti-invariant Higgs bundle}  $(E, \psi, \delta, \varphi)$ on $C$ is an anti-invariant vector bundle $(E, \psi)$ together with a  traceless vector bundle morphism $\varphi \colon E \rightarrow E \otimes L$ such that 
    \begin{equation}\label{eq:antiinvHiggs}
\begin{tikzcd}
	{\sigma^*E} && {E^*} \\
	\\
	{\sigma^*E \otimes L} && {E^*\otimes L}
	\arrow["{- {^{t}\varphi}}", from=1-3, to=3-3]
	\arrow["\psi", from=1-1, to=1-3]
	\arrow["{\psi \otimes id_L}"', from=3-1, to=3-3]
	\arrow["{\sigma^* \varphi}"', from=1-1, to=3-1]
\end{tikzcd}
    \end{equation}
    We call an anti-invariant Higgs bundle \textbf{$\sigma$-symmetric} (resp. \textbf{$\sigma$-alternating} ) when the underlying anti-invariant bundle is.
\end{definition}

\begin{remark}
\begin{enumerate}
    \item If we consider $\sigma^*E$ and $E^*$ as Higgs bundles with respective Higgs fields $\sigma^* \varphi$ and $-{^{t}\varphi}$, then this definition is the statement that $\psi$ is an isomorphism of Higgs bundles. The reason for the minus sign is that the involution on the group scheme acts infinitesimally by a minus sign on the Lie algebra. This ensures that anti-invariant Higgs bundles correspond to the Higgs analog of $(\sigma, \mathcal{G})$-bundles and hence also parahoric Higgs bundles.
    \item In the case of parabolic Higgs bundles, there is a distinction between strong and weak parabolic Higgs bundles. The former are nilpotent with respect to the flag at distinguished points, whereas the latter only preserves the flag. This distinction is still important in the parahoric case and could also be made here. It would correspond to either requiring the Higgs field to vanish on the ramification divisor of $C$ in the strong case, or just sticking with our definition above.
\end{enumerate}
\end{remark}

\begin{definition}
    We define the \textbf{stack of $\sigma$-symmetric anti-invariant Higgs bundles} $\HBunantisym$ (resp. \textbf{$\sigma$-alternating anti-invariant bundles} $\HBunantialt$) by assigning to  a scheme $S$ the groupoid consisting of a coherent sheaf $\mathcal{E}$ on $C \times S$ such that for each $s \in S$ $\mathcal{E}_s$ is a vector bundle, a trivialisation $\delta$ of the determinant of $\mathcal{E}$, a $\sigma$-symmetric (resp. $\sigma$-alternating of type $\tau$) isomorphism $\psi \colon \sigma^* \mathcal{E} \rightarrow \mathcal{E}^*$, and an endomorphism $\varphi \colon \mathcal{E} \rightarrow \mathcal{E} \otimes \pi^* L$ such that the analog of Diagram \ref{eq:antiinvHiggs} commutes.
\end{definition}

\begin{remark}
    If we assume that the underlying bundle is stable, it is shown in \cite[Theorem 3.12]{hacen2022hitchin} that an anti-invariant Higgs bundle corresponds to a cotangent vector at the underlying anti-invariant bundle. Similarly, if we allow derived stacks, the above stacks are the truncation of the cotangent stacks to $\Bunantisym$ and $\Bunantialt$.  This shows that these are algebraic stacks locally of finite type. 
\end{remark}

\begin{proposition}
    The stacks $\HBunantisym$ and $\HBunantialt$ have affine diagonal.
\end{proposition}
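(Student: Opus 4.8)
The plan is to mimic the proof of Proposition \ref{prop:bunaffdiag}, using the forgetful morphisms $\HBunantisym \to \HBunsl$ and $\HBunantialt \to \HBunsl$ that discard the anti-invariant structure $\psi$ and remember only the underlying trace-free Higgs bundle together with the determinant trivialisation (recall that in an anti-invariant Higgs bundle $\varphi$ is traceless and $\delta$ trivialises the determinant, so these morphisms are well-defined). Since $\HBunsl$ has affine diagonal by Proposition \ref{prop:bunaffdiag}, it suffices to realise the isomorphism sheaf of the anti-invariant stacks as a closed subscheme of the corresponding isomorphism sheaf over $\HBunsl$.

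Concretely, let $(a,b) \colon S \to \HBunantisym \times \HBunantisym$ be given by anti-invariant Higgs bundles $(\mathcal{E}_a,\psi_a,\delta_a,\varphi_a)$ and $(\mathcal{E}_b,\psi_b,\delta_b,\varphi_b)$ on $C \times S$. An element of $\underline{\text{Isom}}_{\HBunantisym(S)}(a,b)$ is a vector bundle isomorphism $f \colon \mathcal{E}_a \to \mathcal{E}_b$ which (i) commutes with the Higgs fields as in Diagram \ref{eq:isohiggs}, (ii) is compatible with $\delta_a,\delta_b$, and (iii) intertwines the anti-invariant structures, i.e. $f^* \circ \psi_b \circ \sigma^* f = \psi_a$ as morphisms $\sigma^* \mathcal{E}_a \to \mathcal{E}_a^*$. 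Conditions (i) and (ii) say precisely that $f$, viewed after applying the forgetful morphism, lies in $\underline{\text{Isom}}_{\HBunsl(S)}(a',b')$, where $a',b'$ are the images of $a,b$; by Proposition \ref{prop:bunaffdiag} this is a scheme affine over $S$, carrying a tautological isomorphism $f_{\mathrm{univ}}$ of the pulled-back bundles on $C \times \underline{\text{Isom}}_{\HBunsl(S)}(a',b')$.

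It then remains to observe that condition (iii) cuts out a closed subscheme of $\underline{\text{Isom}}_{\HBunsl(S)}(a',b')$: it is the locus where the morphism $f_{\mathrm{univ}}^* \circ \psi_b \circ \sigma^* f_{\mathrm{univ}} - \psi_a$ of coherent sheaves on $C \times \underline{\text{Isom}}_{\HBunsl(S)}(a',b')$ vanishes, and the vanishing locus of a morphism between coherent sheaves flat over the base is closed, exactly as was used for the Higgs-field compatibility in Proposition \ref{prop:bunaffdiag}. Hence $\underline{\text{Isom}}_{\HBunantisym(S)}(a,b)$ is affine over $S$, so $\HBunantisym$ has affine diagonal; the argument for $\HBunantialt$ is identical, the only change being the type condition $\tau$, which is locally constant and imposes no further constraint on isomorphisms within a fixed type.

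I do not expect a serious obstacle here: the only point requiring any care — and it is the same point already glossed in Proposition \ref{prop:bunaffdiag} — is that the $\psi$-compatibility is representable by a closed immersion, which follows from flatness of the relevant coherent sheaves over the base. Alternatively one could factor through $\Bunantisym$, resp. $\Bunantialt$, once one knows these have affine diagonal, but routing through $\HBunsl$ keeps the argument within the results already established above.
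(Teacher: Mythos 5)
Your proposal is correct and is exactly the argument the paper intends: the paper's proof of this proposition consists of the single sentence that it is ``proven analogously to Proposition \ref{prop:bunaffdiag}'', and your write-up is precisely that analogy carried out --- factor through the forgetful morphism to $\HBunsl$, whose Isom schemes are affine over $S$, and observe that compatibility with $\psi$ is a closed condition (by properness of $C$ and flatness, just as for the Higgs-field compatibility). No gaps; your remark about the type $\tau$ being locally constant is also the right way to dispose of the $\sigma$-alternating case.
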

\begin{proof}
    This is proven analogously to Proposition \ref{prop:bunaffdiag}. 
\end{proof}

 The stack of anti-invariant Higgs bundles admits a number of useful forgetful morphisms, collected in the following commutative diagram. 

\begin{equation*}
\begin{tikzcd}
	&& {\text{HBun}^{\text{ai}}_{\text{SL}_n}(C)} \\
	\\
	{\text{Bun}^{\text{ai}}_{\text{SL}_n}(C)} &&&& {\text{HBun}_{\text{SL}_n}(C)} \\
	\\
	&& {\text{Bun}_{\text{SL}_n}(C)}
	\arrow["{\text{Forget}_\varphi}"', from=1-3, to=3-1]
	\arrow["{\text{Forget}_\psi}", from=1-3, to=3-5]
	\arrow["{\text{Forget}_\psi}"', from=3-1, to=5-3]
	\arrow["{\text{Forget}_\varphi}", from=3-5, to=5-3]
\end{tikzcd}
\end{equation*}

In this diagram, the arrows pointing to the left are affine.

\section{Filtrations and Semistability}

\subsection{Filtrations}

Let $\Theta = [\A^1 / \Gm]$. This stack has two geometric points which we will denote by $0$ and $1$. For the purposes of defining the stack of semistable anti-invariant Higgs bundles it will be important to understand what morphisms from $\Theta$ to $\HBunantisym$/$\HBunantialt$ look like. Translating this maps into $\Gm$-equivariant information on $\A^1 \times C$ is the purpose of the following lemma.

\begin{lemma}\label{lem:Gmequivhbunai}
A morphism from $\Theta$ to $\HBunantisym$ (resp. $\HBunantialt$) is equivalent to giving a $\Gm$-equivariant vector bundle $\mathcal{E}$ over $\AC$ together with a $\Gm$-equivariant $\sigma$-symmetric (resp. $\sigma$-alternating of type $\tau$) isomorphism $\psi \colon \sigma^* \mathcal{E} \rightarrow \mathcal{E}^*$\footnote{Here $\sigma$ denotes the induced involution on $\AC$, and $\sigma^* \mathcal{E}$ and $\mathcal{E}^*$ are endowed with the pullback and the dual of the action on $\mathcal{E}$ respectively.} and a $\Gm$-equivariant morphism $\varphi: \mathcal{E} \rightarrow \mathcal{E} \otimes K$ compatible with $\psi$.
\end{lemma}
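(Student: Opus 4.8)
The statement is the Higgs--anti-invariant analogue of the standard translation ``a map $\Theta \to \mathrm{Bun}$ is a $\Gm$-equivariant bundle on $\A^1 \times C$'', and the plan is to bootstrap from that known fact rather than reprove it. First I would recall (or cite, e.g.\ from the theory of the $\Theta$-stack in \cite{halpern2014structure}) that a morphism $\Theta \to \mathrm{Bun}_{\mathrm{SL}_n}(C)$ is the same as a $\Gm$-equivariant vector bundle $\mathcal{E}$ on $\A^1 \times C$ together with a $\Gm$-equivariant trivialisation of $\det \mathcal{E}$; this uses that $\Theta = [\A^1/\Gm]$ and that a sheaf on $\Theta \times C = [(\A^1 \times C)/\Gm]$ is by descent a $\Gm$-equivariant sheaf on $\A^1 \times C$, flatness over $\Theta$ translating into the fibrewise vector bundle condition. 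Then, because $\HBunantisym$ is built from $\mathrm{Bun}_{\mathrm{SL}_n}(C)$ by adding the extra data of $\psi$ and $\varphi$, it suffices to see that each of these two pieces of data, living on $\Theta \times C$, descends to the corresponding $\Gm$-equivariant datum on $\A^1 \times C$, and conversely.

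The key steps, in order: (1) apply the known equivalence to the forgetful morphism $\Theta \to \mathrm{Bun}_{\mathrm{SL}_n}(C)$ to get the $\Gm$-equivariant bundle $\mathcal{E}$ on $\A^1 \times C$ with its equivariant determinant trivialisation; (2) observe that the involution $\sigma$ on $C$ induces one on $\A^1 \times C$ acting trivially on the $\A^1$ factor, hence commuting with the $\Gm$-action, so that $\sigma^*\mathcal{E}$ carries a natural $\Gm$-equivariant structure (pullback of the one on $\mathcal{E}$), and likewise $\mathcal{E}^* = \mathcal{H}om(\mathcal{E}, \O)$ carries the dual $\Gm$-equivariant structure; (3) note that giving the isomorphism $\psi \colon \sigma^*\mathcal{E} \to \mathcal{E}^*$ over $\Theta \times C$ is, by descent along $\A^1 \times C \to \Theta \times C$, the same as giving a $\Gm$-equivariant isomorphism $\sigma^*\mathcal{E} \to \mathcal{E}^*$ on $\A^1 \times C$ — here I would spell out that the compatibility $\psi = {}^{t}\sigma^*\psi$ (resp.\ $= -{}^{t}\sigma^*\psi$), and the type $\tau$, are conditions checked fibrewise over $\A^1 \times C$ and hence are preserved and reflected by descent, because the locus where they hold is $\Gm$-stable; (4) similarly identify $\varphi \colon \mathcal{E} \to \mathcal{E} \otimes \pi^*L$ over $\Theta \times C$ with a $\Gm$-equivariant $\varphi \colon \mathcal{E} \to \mathcal{E} \otimes K$ on $\A^1 \times C$ (writing $K$ for the pullback of $L$), the tracelessness and the commutativity of Diagram \ref{eq:antiinvHiggs} again being $\Gm$-stable closed conditions that descend both ways; (5) assemble: a morphism $\Theta \to \HBunantisym$ is precisely a compatible quadruple, and by (1)--(4) this is precisely the quadruple $(\mathcal{E}, \psi, \varphi)$ with equivariant structures as stated, with the 2-isomorphisms on both sides matching up, giving the asserted equivalence of groupoids (and naturally in $\Theta$-points, i.e.\ an equivalence of categories).

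The main obstacle, such as it is, is bookkeeping rather than mathematics: one must be careful that the $\Gm$-equivariant structures on $\sigma^*\mathcal{E}$ and on $\mathcal{E}^*$ referred to in the statement are exactly the ones induced by descent — this is why the footnote in the statement pins them down — and that the compatibility diagram of Higgs fields, once transported to $\A^1 \times C$, is literally $\Gm$-equivariant (it is, since all the maps involved are pulled back or dualised from $\Gm$-equivariant maps, and $\sigma$ is $\Gm$-trivial). A second point to handle cleanly is that the equivalence should be functorial, i.e.\ an equivalence of groupoids $\mathrm{Hom}(\Theta, \HBunantisym)$ with the groupoid of such equivariant quadruples; this follows because faithfully flat descent along $\A^1 \times C \to [(\A^1\times C)/\Gm] = \Theta \times C$ is an equivalence of categories of (quasi-)coherent sheaves, and all the structure maps are morphisms of sheaves. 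I would close by remarking that the $\sigma$-alternating case is verbatim the same, replacing $\psi = {}^{t}\sigma^*\psi$ by $\psi = -{}^{t}\sigma^*\psi$ and keeping track of the type $\tau$, which is locally constant on $\A^1 \times C$ and hence determined by (and determines) the type of the restriction to any fibre.
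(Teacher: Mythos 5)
Your proposal is correct and follows essentially the same route as the paper: the paper's proof is a two-line observation that pulling back along the quotient morphism $\A^1 \rightarrow \Theta$ yields a family over $\A^1$, and that factoring through the quotient stack means precisely that the bundle and all the additional structure maps are $\Gm$-equivariant. Your write-up is simply a more detailed unpacking of that same descent argument, with the bookkeeping for $\psi$, $\varphi$, and the compatibility conditions made explicit.
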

\begin{proof}
   From the quotient morphism $\A^1 \rightarrow \Theta$, we get a family of anti-invariant Higgs bundles on $C$ over $\A^1$. Factoring through the quotient stack means precisely that the bundle and corresponding sections are $\Gm$-equivariant. 
\end{proof}

It is well-known that $\Gm$-equivariant vector bundles on $\AC$ correspond to filtered vector bundles on $C$. We will show that the two equivariant maps serve to put restrictions on the corresponding filtration of an anti-invariant Higgs bundle on $C$. For this purpose we recall one way of constructing this equivalence, following \cite{asok2006equivariant}. 

Given a vector bundle $E$ with an increasing filtration $(F^p(E))_{p \in \Z}$, we can produce a graded $\mathcal{O}_C[t]$-module given by 
\begin{equation}
    R(E,F^{\bullet}) = \bigoplus\limits_{p \in \Z} x^p F^p(E).
\end{equation}
This is known as the Rees construction. Because $\Gm$-equivariant morphisms of $\Gm$-equivariant vector bundles have to respect the grading, it follows that $\Gm$-equivariant morphisms on $\A^1$ correspond to endomorphisms between the corresponding vector spaces that respect the filtrations. For $\varphi$, this implies that we have to consider filtrations by Higgs subbundles, i.e subbundles $F$ such that $\varphi(F) \subset F \otimes L$. To deduce a similar conclusion for $\psi$, we have to figure out what the filtration $\mathcal{E}^*$ induced by the dual action looks like given a filtration of $E$. This is standard, but we could not find a reference.

In \cite[Section 3]{asok2006equivariant} the equivalence between filtered vector spaces and $\Gm$-equivariant bundles on $\A^1$ is outlined. To recover the  filtration given a $\Gm$-equivariant bundle $\mathcal{E}$ on $\A^1$, for $U = \A^1 \backslash 0$ define the space
\begin{equation}
    E = \Gamma(U, \mathcal{V}|_U)^\Gm
\end{equation}
of $\Gm$-invariant sections away from the origin. This recovers the underlying vector space and is isomorphic to the fiber away from 0. We can define an increasing filtration on this vector space through
\begin{equation}
    F^p(E) = \lbrace s \in E \, | \, x^p s \text{ extends to a global section.} \rbrace
\end{equation}
where $x$ is a coordinate on $\A^1$. A useful fact is that $\Gamma(\A^1, \mathcal{E})$ is generated by the set of all $x^p s$ for $p \in \Z$ and $s \in F^p(E)$. 

To recover the filtration on the dual vector bundle, the following lemma will be helpful.
\begin{lemma}\label{lem:extenddual}
    A section $ \alpha \in \Gamma(U, \mathcal{E}^*)$ extends to $\A^1$ if and only if for every global section $s \in \Gamma(\A^1, \mathcal{E}) \subset \Gamma(U, \mathcal{E}\mid_U)$, the function $\alpha(s) \in \Gamma(U)$ extends to a global function.
\end{lemma}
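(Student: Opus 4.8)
The plan is to prove both directions of the equivalence using the description of global sections of $\mathcal{E}$ recalled just before the lemma, namely that $\Gamma(\A^1,\mathcal{E})$ is generated (as an $\O_C[x]$-module, equivariantly) by the elements $x^p s$ with $p\in\Z$ and $s\in F^p(E)$. The forward direction is the easy one: if $\alpha\in\Gamma(U,\mathcal{E}^*)$ extends to a $\Gm$-equivariant section $\tilde\alpha\in\Gamma(\A^1,\mathcal{E}^*)$, then for any $s\in\Gamma(\A^1,\mathcal{E})$ the pairing $\tilde\alpha(s)$ is a regular function on $\A^1$ extending $\alpha(s)$, since evaluation $\mathcal{E}^*\otimes\mathcal{E}\to\O_{\A^1\times C}$ is a morphism of sheaves and then one pushes forward along the (affine, hence cohomologically trivial in the relevant degree) projection to $\A^1$; concretely, $\Gamma(\A^1\times C,\O)=\Gamma(\A^1,\O_{\A^1})$ because $C$ is proper and connected, so $\tilde\alpha(s)$ lands in $\Gamma(\A^1)$ as required.

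For the converse, suppose $\alpha\in\Gamma(U,\mathcal{E}^*)$ has the property that $\alpha(s)$ extends to a global function for every $s\in\Gamma(\A^1,\mathcal{E})$. I want to produce an extension $\tilde\alpha\in\Gamma(\A^1,\mathcal{E}^*)$. The point is that $\mathcal{E}^*$ is locally free, so an extension exists over $\A^1\times C$ minus a closed subset of codimension $\geq 2$ by Hartogs/normality, and the only potential obstruction to extending across all of $\{0\}\times C$ is a pole along that divisor; such a pole would be detected by pairing against sections of $\mathcal{E}$. More precisely, I would argue as follows: restrict attention to a trivializing affine open $V\subset C$ where $\mathcal{E}|_{\A^1\times V}\cong\O^{\oplus n}$, so $\mathcal{E}^*|_{\A^1\times V}\cong\O^{\oplus n}$ as well and $\alpha|_{U\times V}$ is given by an $n$-tuple of functions on $U\times V$; these are regular functions on $\A^1\times V$ possibly with poles along $\{0\}\times V$, and pairing with the standard basis sections (which, after the identification, are $\O_C[x]$-combinations of the generators $x^p s$) shows each coordinate function is in fact regular, i.e. has no pole. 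Glueing over a cover of $C$ gives $\tilde\alpha$ over $\A^1\times C$; equivariance is automatic because $\alpha$ was defined by an equivariant (invariant-sections) recipe, or can be arranged by averaging/taking the weight-$0$ part.

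The main obstacle I anticipate is making the codimension/Hartogs argument precise in the stacky-equivariant setting and, relatedly, controlling the behaviour purely along the divisor $\{0\}\times C$ rather than over an arbitrary codimension-two locus: one needs that a rational section of a vector bundle on the smooth variety $\A^1\times C$ which is regular on $U\times C$ and whose pairings with a generating set of an $\O$-lattice extend, is itself regular. I would handle this by reducing, via a local trivialization of $\mathcal{E}$ over $\A^1\times V$ for affine $V\subset C$, to the statement that a Laurent polynomial in $x$ over $\Gamma(V,\O_C)$ with no negative-$x$ terms after pairing against the basis has no negative-$x$ terms — which is immediate once one notes the generators $x^p s$, $s\in F^p(E)$, span the lattice $\Gamma(\A^1,\mathcal{E})$ and include enough elements to see every coordinate. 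The rest is bookkeeping with the Rees module and the filtration conventions fixed above.
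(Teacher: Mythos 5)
Your argument is correct and is essentially the paper's proof: the paper simply observes that the hypothesis says the $R[t,t^{-1}]$-linear functional $\alpha$ on $\Gamma(U,\mathcal{E})=\Gamma(\A^1,\mathcal{E})\otimes_{R[t]}R[t,t^{-1}]$ carries the lattice $\Gamma(\A^1,\mathcal{E})$ into $R[t]$, so its restriction to that lattice \emph{is} the desired extension --- which is exactly your coordinate/pairing computation, phrased module-theoretically instead of via a trivialisation. Your Hartogs and equivariance digressions are unneeded (over $\A^1$ the locus $\{0\}$ is a divisor, so the only issue is the pole your pairing argument already rules out, and the lemma asserts nothing about equivariance), but they do not affect the correctness of the core argument.
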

\begin{proof}
    We will prove a slightly more general result. Suppose that $M$ is an $R[t]$-module for some commutative ring $R$. The setup of the lemma corresponds to a morphism $ \alpha \colon N = M \otimes_{R[t]} R[t, t^{-1}] \rightarrow R[t, t^{-1}]$ in the case $R = \C$. We want to show that there exists a morphism $\beta \colon M \rightarrow R[t]$ such that $\alpha = \beta \otimes 1_{R[t, t^{-1}]}.$ Furthermore, we can interpret $M$ as a submodule of $N$. We are moreover given that $\alpha(M) \subset R[t]$. This corresponds to the following commutative diagram
\[\begin{tikzcd}
	N && {R[t,t^{-1}]} \\
	\\
	M && {R[t]}
	\arrow["\alpha", from=1-1, to=1-3]
	\arrow["{\alpha|_M}", from=3-1, to=3-3]
	\arrow[hook, from=3-1, to=1-1]
	\arrow[hook, from=3-3, to=1-3]
\end{tikzcd}\]
It thus follows that $\alpha|_M$ is the desired extension.
\end{proof}
Given this lemma, the filtration of the dual bundle is characterised by the following proposition.
\begin{proposition}\label{prop:dualfiltration}
    Let $\mathcal{E}$ be a $\Gm$-equivariant bundle on $\A^1$ corresponding to a filtered vector space $(E, F^p(E))$. Then the $\Gm$-equivariant vector bundle $\mathcal{E}^*$ corresponds to the filtered vector space $(E^*, \Ann(F^{-p + 1}(E)))$.
\end{proposition}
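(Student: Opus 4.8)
The plan is to unwind the two characterizations in play: (i) the ``extension'' description of the filtration attached to a $\Gm$-equivariant bundle (global sections of $x^p s$ for $s$ in $F^p$), and (ii) the duality recognized in Lemma \ref{lem:extenddual}. So fix $\mathcal{E}$ corresponding to $(E, F^\bullet(E))$ and write $\mathcal{E}^*$ for the dual $\Gm$-equivariant bundle. By the recipe recalled above, the underlying vector space of $\mathcal{E}^*$ is $\Gamma(U, \mathcal{E}^*\mid_U)^{\Gm} = E^*$, and an element $\alpha \in E^*$ lies in $F^q(E^*)$ precisely when $x^q \alpha$, viewed as a section of $\mathcal{E}^*$ over $U$, extends to all of $\A^1$. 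So the whole statement reduces to computing, for a fixed $\alpha \in E^*$, the largest $q$ for which $x^q\alpha$ extends, and checking it equals the largest $q$ for which $\alpha$ annihilates $F^{-q}(E)$; i.e. I must show $x^q\alpha$ extends $\iff \alpha \in \Ann(F^{-q}(E))$.

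The key step is to feed $x^q\alpha$ into Lemma \ref{lem:extenddual}: $x^q\alpha$ extends to $\A^1$ iff for every global section $t \in \Gamma(\A^1,\mathcal{E})$ the function $(x^q\alpha)(t) \in \Gamma(U)$ extends to a global function, i.e. lies in $\C[x]$. Now use the stated generation fact: $\Gamma(\A^1,\mathcal{E})$ is generated over $\C[x]$ by the sections $x^p s$ with $p \in \Z$, $s \in F^p(E)$. Since $\alpha$ is $\Gm$-invariant of the appropriate weight, $(x^q\alpha)(x^p s) = x^{p+q}\,\alpha(s)$, where $\alpha(s) \in \C$ is just the pairing of the fiber vectors. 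Hence the extension condition becomes: for all $p$ and all $s \in F^p(E)$, the exponent $p+q \geq 0$ whenever $\alpha(s) \neq 0$. Equivalently: $\alpha(s) = 0$ for every $s$ lying in some $F^p(E)$ with $p < -q$; since the filtration is increasing, that is exactly $\alpha(F^{-q-1}(E)) = 0$, i.e. $\alpha \in \Ann(F^{-q-1}(E))$. Therefore $\alpha \in F^q(E^*)$ iff $\alpha \in \Ann(F^{-q-1}(E)) = \Ann(F^{-(q+1)+... }$ — being careful with the index, $\Ann(F^{-q-1})$, which matches the claimed $\Ann(F^{-p+1})$ after renaming $q \mapsto p$ and noting $-p-1 = -(p+1)$; I will double-check the convention (whether ``$x^q$ extends'' uses $q$ or $-q$ in the definition of $F^q$) against the paper's display for $F^p(E)$ so the shift comes out as $-p+1$ rather than $-p-1$.

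The main obstacle — and the only real subtlety — is precisely this bookkeeping of signs and the off-by-one in the index: the dual of a $\Gm$-representation of weight $w$ has weight $-w$, and the ``$x^p$'' shift in the definition of $F^p$ interacts with that sign, so one must be scrupulous about whether the filtration is increasing or decreasing and about the $\pm 1$ coming from the strict-versus-non-strict inequality $p + q \geq 0$. I will pin this down by testing the rank-one case ($E = \C$ placed in a single filtration step, say $F^p(E) = E$ for $p \geq p_0$ and $0$ otherwise, corresponding to $\mathcal{O}(p_0)$-type jump), where the dual is the opposite jump, and confirm that the formula $\Ann(F^{-p+1}(E))$ reproduces it. Once the rank-one check is consistent, the general case follows formally from the generation statement as above, so a one-line remark suffices. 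A secondary, purely notational point: the lemma is about $\Gm$-equivariant bundles on $\A^1$ (a single curve, or a point), and the proposition is stated at that level, so no interaction with the curve $C$ or with $\sigma$ enters here — the result will later be applied fiberwise over $C$, but that is not part of this proof.
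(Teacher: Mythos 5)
Your argument is essentially identical to the paper's proof: both feed $x^p\alpha$ into Lemma \ref{lem:extenddual}, evaluate on the generators $x^k v$ with $v \in F^k(E)$, use that $\alpha(v)$ is a $\Gm$-invariant hence constant function, and reduce the extension condition to the inequality $p+k \geq 0$. The index shift you flag as the remaining subtlety is a real discrepancy but it is not yours to resolve by convention-hunting: your $\Ann(F^{-p-1}(E))$ is exactly what the paper's own penultimate step (``$\alpha|_{F^k}=0$ for all $k$ with $-k>p$'', for an increasing filtration) yields, and the passage to $F^{-p+1}$ in the paper's final line does not follow from it, so the stated index appears to be an error in the paper rather than a gap in your argument.
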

\begin{proof}
    Let $\alpha$ be a $\Gm$-invariant section of $\mathcal{E}^*$ over $U$. Using Lemma \ref{lem:extenddual}, we see that for $p \in \Z$ $x^p(\alpha)$ extends to a global section  if and only if for all $k \in Z$ and $v \in F^k(E)$ 
    \begin{equation*}
        x^p \alpha(x^k v) = x^{p + k} \alpha(v)
    \end{equation*} 
    extends to a global function. Because $\alpha$ and $v$ are both $\Gm$-invariant, it follows that $\alpha(v)$ is a $\Gm$-invariant function, hence a constant function. When $\alpha(v) \neq 0$ this extends if and only if 
    \begin{equation*}
        - k \leq p.
    \end{equation*}
    It follows that $x^p \alpha$ extends if and only if for all $k \in \Z$ such that the $-k > p$, we have that 
    \begin{equation*}
        \alpha|_{F^k(V)} = 0.
    \end{equation*}
    because the filtration on $V$ is increasing, this is equivalent to 
    \begin{equation*}
        \alpha|_{F^{-p + 1}(V)} = 0.
    \end{equation*}
    This proves that 
    \begin{equation*}
        F^p(V^*) = \Ann(F^{-p + 1}(V)). 
    \end{equation*}
\end{proof}

To apply this construction to $\Gm$-equivariant vector bundles on $\A^1 \times C$, we define a sheaf $E$ on $C$ through
\begin{equation*}
    E(V) = \Gamma(U \times V, \mathcal{E})^\Gm,
\end{equation*}
for an open $V \subset C$. Because a $\Gm$-invariant section is completely determined by its restriction to the fiber $\lbrace 1 \rbrace \times V$, this sheaf is isomorphic to $\mathcal{E}|_{\lbrace 1 \rbrace \times C}$. This gives us a vector bundle $E$ on $C$. 

We similarly define locally free subsheaves $F^p(E)$ of $E$ through
\begin{equation}
    F^p(E)(U \times V) = \lbrace s \in E(V) \, | \, x^p s \text{ extends to } \A^1 \times V \rbrace.
\end{equation}
The following lemma ensures that these are vector subbundles. 
\begin{lemma}
    The quotient $E/F^p(E)$ is torsion-free for all $p \in \Z$.
\end{lemma}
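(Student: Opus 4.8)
The plan is to show that a local section of $E$ in the fiber over a point $q \in C$ that lies in the image of $F^p(E)$ actually comes from a section of $F^p(E)$ defined in a neighborhood of $q$; equivalently, that if $s$ is a local section of $E$ near $q$ whose value at $q$ extends to a section of $\mathcal{E}$ over $\A^1$ near $q$ after multiplication by $x^p$, then (after possibly shrinking) $x^p s$ extends over all of $\A^1 \times V$. The key point is that the obstruction to $x^p s$ extending is, by the Rees description, a question about a finitely generated module over $\mathcal{O}_{C,q}[x]$, and the failure to extend is detected by torsion in the $x$-direction, which is concentrated away from the generic point of the fiber.

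First I would work locally: fix $q \in C$, let $R = \mathcal{O}_{C,q}$ (or a small affine $V = \Spec A$), and write $\mathcal{E}$ over $\A^1 \times V$ as the Rees module $M = \bigoplus_{p} x^p F^p(E)|_V$, a graded $A[x]$-module, with $E|_V = M \otimes_{A[x]} A[x,x^{-1}]$ in degree $0$, as recalled in the excerpt. Then $F^p(E)|_V = (x^p M \cap E|_V)$ in the appropriate sense — concretely $F^p(E)(V) = \{ s \in E(V) : x^{-p} s \in M \}$ after identifying degrees. Now suppose $s \in E(V)$ and $\bar{s} \in (E/F^p(E))(V)$ is a torsion element, so $a \bar s = 0$ for some nonzero $a \in A$, i.e. $a s \in F^p(E)(V)$, meaning $x^p (a s)$ extends to $\A^1 \times V$. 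I want to conclude $x^p s$ itself extends, at least after shrinking $V$ so that $a$ is invertible away from its vanishing locus; the content is that the vanishing locus of $a$ is a proper closed subset, and extension over $\A^1 \times (V \setminus Z(a))$ plus the fact that $\mathcal E$ is already a vector bundle (hence reflexive / $S_2$) forces extension over $\A^1 \times V$.

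The cleanest way to package this: $x^p s$ is a priori a section of $\mathcal E$ over $\A^1 \times V \setminus (\{0\} \times Z(a))$, since over $U \times V$ it is $s$ scaled, and over $\A^1 \times (V\setminus Z(a))$ it extends by hypothesis (as $a$ is invertible there and $x^p a s$ extends). The locus $\{0\} \times Z(a)$ has codimension $\geq 2$ in the smooth surface $\A^1 \times C$ (it is $0$-dimensional when $C$ is a curve and $a \neq 0$), so by normality of $\A^1 \times V$ and local freeness of $\mathcal E$, Hartogs extension gives a unique extension of $x^p s$ to all of $\A^1 \times V$. Hence $s \in F^p(E)(V)$, so $\bar s = 0$, proving $E/F^p(E)$ is torsion-free. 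I would cite the standard algebraic Hartogs lemma (a reflexive — in particular locally free — sheaf on a normal scheme satisfies $S_2$, so sections extend over closed subsets of codimension $\geq 2$).

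The main obstacle is bookkeeping the passage between the Rees/graded picture on $\A^1 \times V$ and the "extends to a global section" condition defining $F^p$, making sure the degree shifts and the identification $E \cong \mathcal E|_{\{1\}\times C}$ are handled correctly, and confirming that the relevant bad locus really has codimension two rather than one — this is where genus $g \geq 2$ plays no role but the $1$-dimensionality of $C$ does, since it guarantees $Z(a)$ is finite and $\{0\}\times Z(a)$ is codimension $2$ in the surface. Once the codimension count is secured, the extension step is a direct appeal to Hartogs and there is nothing subtle left; the torsion-freeness, and hence that each $F^p(E)$ is a subbundle (its quotient being torsion-free over the curve $C$, hence locally free), follows immediately.
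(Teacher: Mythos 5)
Your proof is correct and is essentially the paper's argument: both reduce to the observation that multiplying by a nonzero function pulled back from $C$ cannot affect extendability across the divisor $\{x=0\}$ except over its zero locus, which meets that divisor in codimension two. The paper phrases this as invariance of the order of vanishing in $x$ and leaves the final extension step implicit, whereas you spell out the codimension count and the Hartogs/$S_2$ extension for the locally free sheaf $\mathcal{E}$ explicitly.
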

\begin{proof}
    Let $\alpha \in E/F^p(E)(V)$ for some open $V$ and suppose that there is a function $f$ on $C$ such that $f \alpha = 0$. If we pick a preimage $\tilde{\alpha} \in E(V)$, then this implies that $x^p f \tilde{\alpha} \in \mathcal{E}(U \times V) $ extends to a section on $\A^1 \times V$, or stated otherwise, the order of vanishing in $x$ of $x^p f \tilde{\alpha}$ is non-negative.  But since $f$ is a function on $C$, multiplying by $f$ cannot change the order of vanishing, and $x^p \tilde{\alpha}$ must also extend to a global section. This implies that $\alpha = 0 \in E/F^p(E)(V)$, and thus the quotient is torsion-free.
\end{proof}
The statement that
\begin{equation}
    \Gamma(\A^1 \times V, \mathcal{E}) = \langle x^p s \, | \, p \in \Z, s \in F^p(E) \rangle
\end{equation}
likewise remains true. 

The proof of Lemma \ref{lem:extenddual} also applies to a similar extension result for $\alpha \in \Gamma(U \times V, \mathcal{E}^*)$ to $A^1 \times V$. 

Putting all of this together, the proof of Proposition \ref{prop:dualfiltration} goes through, which gives the following result.

\begin{proposition}\label{prop:dualfiltrationbundle}
    Let $\mathcal{E}$ be a $\Gm$-equivariant bundle on $\A^1 \times C$ corresponding to a filtered vector bundle $(E, F^p(E))$ on $C$. Then the $\Gm$-equivariant vector bundle $\mathcal{E}^*$ corresponds to the filtered vector bundle $(E^*, \Ann(F^{-p + 1}(E)))$.
\end{proposition}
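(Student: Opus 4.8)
The plan is to carry out the argument of Proposition \ref{prop:dualfiltration} relative to $C$, using the ingredients assembled just above. First, $\mathcal{E}^*$ carries the dual $\Gm$-action, so under the dictionary recalled above it corresponds to a filtered vector bundle $(E^*, G^{\bullet})$ on $C$, where $E^* \cong \mathcal{E}^*|_{\{1\} \times C}$ is the ordinary dual of $E$ and, with $U = \A^1 \backslash 0$ as above,
\[
  G^p(E^*)(U \times V) = \{\alpha \in E^*(V) \mid x^p \alpha \text{ extends to a section over } \A^1 \times V\}.
\]
The goal is the identification $G^p(E^*) = \Ann(F^{-p+1}(E))$. Both sides are subbundles of $E^*$: the annihilator of a subbundle of $E$ is a subbundle of $E^*$, and $G^p(E^*)$ is a subbundle by the torsion-freeness lemma applied to $\mathcal{E}^*$. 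Hence it suffices to prove the equality after restricting to a dense open of $C$, e.g. at the generic point, where it reduces to a statement about finite-dimensional vector spaces over the function field.

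Next, fix $V$ and $\alpha \in E^*(V)$. By the relative form of Lemma \ref{lem:extenddual}, which holds by the same proof as noted above, $x^p \alpha$ extends to $\A^1 \times V$ if and only if $(x^p\alpha)(s)$ extends to a regular function on $\A^1 \times V$ for every $s \in \Gamma(\A^1 \times V, \mathcal{E})$. Since $\Gamma(\A^1 \times V, \mathcal{E})$ is generated by the sections $x^k v$ with $k \in \Z$ and $v \in F^k(E)(V)$, it is enough to test this on $s = x^k v$, giving $(x^p\alpha)(x^k v) = x^{p+k}\alpha(v)$. Here $\alpha(v)$ is a $\Gm$-invariant regular function on $U \times V$, hence pulled back from $V$, so $x^{p+k}\alpha(v)$ extends over $\A^1 \times V$ exactly when $p + k \geq 0$ wherever $\alpha(v) \neq 0$. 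Running the same bookkeeping as in the proof of Proposition \ref{prop:dualfiltration}, and using that $F^{\bullet}$ is increasing, this condition becomes $\alpha|_{F^{-p+1}(E)(V)} = 0$, i.e. $\alpha \in \Ann(F^{-p+1}(E))(V)$, which is the desired equality.

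The numerical part of this is identical to the vector-space case of Proposition \ref{prop:dualfiltration}; the points that need care are sheaf-theoretic. One must check that the reduction to the generators $x^k v$ is legitimate over $\A^1 \times V$ and compatibly with restriction in $V$, and that the extension criterion for $x^{p+k}\alpha(v)$, phrased locally on $C$, upgrades the inclusion of subsheaves to an equality of subbundles. Both are dispatched by the torsion-freeness lemma together with the observation that $G^p(E^*)$ and $\Ann(F^{-p+1}(E))$ are saturated subsheaves of $E^*$, so one may pass to the generic point of $C$, where the statement is exactly the finite-dimensional assertion of Proposition \ref{prop:dualfiltration} and its proof applies verbatim. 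I expect this bookkeeping to be the main, though entirely routine, obstacle; there is no new conceptual input beyond Proposition \ref{prop:dualfiltration}.
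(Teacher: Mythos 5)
Your proposal is correct and follows essentially the same route as the paper, which simply observes that the relative versions of Lemma \ref{lem:extenddual}, the generation statement for $\Gamma(\A^1 \times V, \mathcal{E})$, and the torsion-freeness lemma let the proof of Proposition \ref{prop:dualfiltration} go through verbatim. Your additional remarks about saturation and passing to the generic point are sound bookkeeping but add nothing beyond what the paper's one-line argument already relies on.
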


To see how this analysis imposes conditions on the filtrations, we define an orthogonal complement with respect to $\psi$. This notion is due to \cite[Proof of Proposition 4.2]{zelaci2019moduli}.

\begin{definition}
    Let $F$ be a subbundle of an anti-invariant bundle $(E, \psi)$. We define $F^{\perp_\sigma}$ as the kernel of 
    \begin{equation*}
        E \cong \sigma^* E^* \twoheadrightarrow \sigma^* F^*,
    \end{equation*}
    where the isomorphism is given by $\sigma^*\psi$.
\end{definition}
The isomorphism $\psi$ thus identifies $\sigma^* F^{\perp_\sigma}$ with $\Ann(F)$. Putting this together with Lemma \ref{lem:Gmequivhbunai} and Proposition \ref{prop:dualfiltration}, we obtain the following proposition.

\begin{proposition} \label{prop:filtonanti-invHiggs}
    Morphisms from $\Theta$ into $\HBunantisym$ (resp. $\HBunantialt$) correspond to a $\sigma$-symmetric (resp. $\sigma$-alternating of type $\tau$) anti-invariant Higgs bundle $E$ on $C$ together with a filtration by Higgs subbundles of the form 
    \begin{equation}
        0 = F^0 \subset F^1 \subset \ldots \subset F^k \subseteq {F^k}^{\perp_\sigma} \subset \ldots {F^1}^{\perp_\sigma} \subset {F^0}^{\perp_\sigma} = E.
    \end{equation}
\end{proposition}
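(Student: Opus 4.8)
The plan is to run the Rees-construction dictionary of this section on the three pieces of data supplied by Lemma~\ref{lem:Gmequivhbunai}, treating $\varphi$ and $\psi$ separately. So I would start with a morphism $\Theta \to \HBunantisym$ (the $\sigma$-alternating case being identical) and use Lemma~\ref{lem:Gmequivhbunai} to replace it by a $\Gm$-equivariant vector bundle $\mathcal{E}$ on $\AC$, a $\Gm$-equivariant $\sigma$-symmetric isomorphism $\psi\colon \sigma^*\mathcal{E} \to \mathcal{E}^*$, and a compatible $\Gm$-equivariant Higgs field $\varphi\colon \mathcal{E}\to\mathcal{E}\otimes K$. By the correspondence recalled above, $\mathcal{E}$ is the Rees bundle of a filtered vector bundle $(E, F^\bullet(E))$ on $C$; the $F^p(E)$ are genuine subbundles because $E/F^p(E)$ is torsion-free, and $F^p(E)=0$ for $p\ll 0$, $F^p(E)=E$ for $p\gg 0$ since $\mathcal{E}$ is a bundle. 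Since $\Gm$-equivariant maps of Rees bundles are exactly those respecting the grading, equivariance of $\varphi$ is equivalent to $\varphi(F^p(E))\subseteq F^p(E)\otimes L$ for all $p$, i.e. the filtration is by Higgs subbundles — this part was already noted before the statement.

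The substantive step is to translate $\psi$. Pulling back along $\sigma$ touches only the curve factor, hence commutes with the Rees construction, so $\sigma^*\mathcal{E}$ is the Rees bundle of $(\sigma^*E, \sigma^* F^\bullet(E))$; and by Proposition~\ref{prop:dualfiltrationbundle}, $\mathcal{E}^*$ is the Rees bundle of $(E^*, \Ann(F^{-p+1}(E)))$. A $\Gm$-equivariant isomorphism of Rees bundles is strict for the associated filtrations (its inverse is also equivariant, hence filtered, which forces equality), so $\psi(\sigma^* F^p(E)) = \Ann(F^{-p+1}(E))$ for every $p$. On the other hand, the very definition of $\perp_\sigma$ gives $\psi(\sigma^* F^{\perp_\sigma}) = \Ann(F)$ for any subbundle $F\subseteq E$, as already observed before the statement. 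Taking $F = F^{-p+1}(E)$ and comparing the two identities, and using that $\psi$ and $\sigma^*$ are isomorphisms, yields $F^p(E) = (F^{-p+1}(E))^{\perp_\sigma}$ for all $p$: the filtration is self-dual under the index involution $p\mapsto -p+1$.

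It remains to put this into the displayed shape and prove the converse. Listing the distinct members of the filtration as $0 = V_0\subsetneq V_1\subsetneq\cdots\subsetneq V_r = E$, the self-duality shows $\perp_\sigma$ is an inclusion-reversing involution of $\{V_0,\dots,V_r\}$, so $V_i^{\perp_\sigma}=V_{r-i}$, and $V_i\subseteq V_i^{\perp_\sigma}$ precisely when $2i\le r$. Writing $F^i := V_i$ for $0\le i\le k := \lfloor r/2\rfloor$ rewrites the filtration as $0 = F^0\subset\cdots\subset F^k\subseteq {F^k}^{\perp_\sigma}\subset\cdots\subset {F^0}^{\perp_\sigma}=E$, the middle inclusion being an equality ($V_k = V_k^{\perp_\sigma}$) when $r$ is even and strict when $r$ is odd — which is exactly why the statement is phrased with ``$\subseteq$''. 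For the converse, given an anti-invariant Higgs bundle with such a filtration I would re-index it to a $\Z$-filtration that is self-dual under $p\mapsto -p+1$ (letting the middle term $V_k$, when $V_k = V_k^{\perp_\sigma}$, occupy two consecutive indices), form its Rees bundle $\mathcal{E}$, observe that $\varphi$ extends to a $\Gm$-equivariant Higgs field since it preserves the filtration, and that $\psi$ extends to a $\Gm$-equivariant isomorphism $\sigma^*\mathcal{E}\to\mathcal{E}^*$ precisely because the self-dual shape together with $\psi(\sigma^* F^{\perp_\sigma})=\Ann(F)$ and the involutivity $(F^{\perp_\sigma})^{\perp_\sigma}=F$ (valid since ${}^t\sigma^*\psi=\pm\psi$ makes $\psi$ a perfect pairing with a symmetry) match the dual filtration of Proposition~\ref{prop:dualfiltrationbundle}; the $\sigma$-symmetric (resp. $\sigma$-alternating of type $\tau$) property is inherited verbatim, and Lemma~\ref{lem:Gmequivhbunai} converts the resulting data back into a morphism $\Theta\to\HBunantisym$.

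The step I expect to be the main obstacle is the bookkeeping in the second paragraph: matching up the $\sigma$-pullback filtration, the off-by-one dual filtration $\Ann(F^{-p+1}(E))$ of Proposition~\ref{prop:dualfiltrationbundle}, and the definition of $\perp_\sigma$, and in particular being careful that a $\Gm$-equivariant isomorphism is automatically strict, so that one gets an honest equality $\psi(\sigma^*F^p(E)) = \Ann(F^{-p+1}(E))$ rather than a pair of inclusions. Everything after that is combinatorics of a finite flag, the only trap being to leave room for the self-dual middle term.
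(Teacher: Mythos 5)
Your proposal is correct and follows essentially the same route as the paper: translate the $\Theta$-map into $\Gm$-equivariant Rees data via Lemma~\ref{lem:Gmequivhbunai}, use Proposition~\ref{prop:dualfiltrationbundle} to identify $\psi(\sigma^*F^p)$ with $\Ann(F^{-p+1})$, and conclude $F^p = (F^{-p+1})^{\perp_\sigma}$ from the definition of $\perp_\sigma$. The paper's proof is considerably terser, leaving implicit the strictness of equivariant isomorphisms, the re-indexing combinatorics, and the converse, all of which you spell out correctly.
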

\begin{proof}
    We have already seen that this corresponds to a filtration of $E$ by Higgs subbundles $(F^p_{p \in \Z}$ together with an isomorphism $\psi$ that respects the filtrations induced on $\sigma^* E$ and $E^*$.  This identifies $\sigma^*(F^p)$ with $\Ann(F^{-p + 1})$. By definition, this implies that 
    \begin{equation}
        F^p = {F^{-p + 1}}^{\perp_\sigma},
    \end{equation}
    and this allows us to rewrite the filtration in the desired form.
\end{proof}
\begin{remark}
    Note that the isomorphism $\psi$ induces an isomorphism of $\sigma^*({F^k}^{\perp_\sigma} / F^k)$ with $({F^k}^{\perp_\sigma} / F^k)^*$ and $\sigma^*(F^j/F^{j-1})$ with $({F^{j-1}}^{\perp_\sigma}/{F^j}^{\perp_\sigma})^*$. This ensures that the associated graded of the above filtration is again an anti-invariant Higgs bundle. This should be the case as the image of $0 \in \Theta$ corresponds to the associated graded of the filtration.   
\end{remark}

\subsection{Semistability}

To define an open substack of semistable anti-invariant Higgs bundles, we follow the methods of \cite{heinloth2018hilbert}, \cite{alper2023existence} and \cite{halpern2014structure}.

If $\mathcal{L}$ is a $\Gm$-equivariant line bundle on $\A^1$, then its global sections are given by $H^0(\A^1, \mathcal{L}) = k[x] \cdot e$, where $e$ is a section unique up to a non-zero scalar and $\Gm$ acts on it by $t \cdot e = t^d e$. We call the integer $d$ the \textbf{weight} of $\mathcal{L}$ and denote it by $\wt(\mathcal{L})$.

\begin{definition}\cite[Definition 1.2 and Remark 1.3]{heinloth2018hilbert}
    Let $\mathcal{X}$ be an algebraic stack and $\mathcal{L}$ a line bundle on $\mathcal{X}$. A point $x \in \mathcal{X}(K)$ is \textbf{semistable} with respect to $\mathcal{L}$ if for all morphisms $f \colon 
    \Theta_K \rightarrow \mathcal{X}$ with $f(1) \cong x$ and $f(0)  x$, 
    \begin{equation*}
        \wt(f^* \mathcal{L}) \leq 0
    \end{equation*}
    holds.
\end{definition}

The stack $\Bun$ has a natural line bundle $\Ldet$ whose fiber over a vector bundle $E$ is the determinant of its cohomology
\begin{equation*}
    \mathcal{L}_{\text{det}, E} = \text{det}(H^*(C, \End(E)))^{-1}.
\end{equation*}
A morphism $ f \colon \Theta \rightarrow \Bun$ corresponds to a filtered vector bundle $(E, (E^i)_{i \in \Z})$. The weight $\wt(f^* \Ldet)$ can be calculated in terms of the ranks and degrees of the subbundles $E^i$. In \cite[Section 1.E.c]{heinloth2018hilbert}  it is shown that 
\begin{equation}\label{eq:weightBun}
    \wt(f^* \Ldet) = 2 \sum\limits_{i \in \Z} \text{rk}(E^i)(\text{rk}(E)- \text{rk}(E^i))(\mu(E^i) - \mu(E / E^i)) , 
\end{equation}
where $\mu(E^i) = \frac{\text{deg}(E^i)}{\text{rk}(E^i)}$ is the slope of $E^i$. Analyzing this formula shows that this number is non-positive if and only if it is so for all 1-step filtrations, i.e. a single non-trivial subbundle. This gives the usual criterion for semistability, namely a vector bundle $E$ is semistable if and only if $\mu(F) \leq \mu(E)$ for all non-trivial subbundles $F$. In the case where $\text{deg}(E) = 0$, this reduces to the condition that $\text{deg}(F) \leq 0$.

By pulling back along the forgetful morphism $g: \HBun \rightarrow \Bun$, we get a line bundle $g^* \Ldet$ on the stack of Higgs bundles. A morphism $f: \Theta \rightarrow \HBun$ corresponds to a filtration of the bundle by Higgs subbundles. Composing with the forgetful morphism, we get a morphism $h: \Theta \rightarrow \Bun$. The weight can be computed as 
\begin{equation}
    \wt(f^* g^* \Ldet) = \wt(h^* \Ldet).
\end{equation}

We have
\[\begin{tikzcd}
	{f^*g^* \mathcal{L}_{\det} \cong h^* \mathcal{L}_{\det}} && {g^* \mathcal{L}_{\det}} \\
	\Theta && {\text{HBun}(C)} \\
	&&&& {\mathcal{L}_{\det}} \\
	&&&& {\text{Bun}(C)}
	\arrow["f", from=2-1, to=2-3]
	\arrow[no head, from=1-3, to=2-3]
	\arrow[no head, from=3-5, to=4-5]
	\arrow["g"', from=2-3, to=4-5]
	\arrow["h"', from=2-1, to=4-5]
	\arrow[no head, from=1-1, to=2-1]
\end{tikzcd}\]

So, for semistability of Higgs bundles we get the same slope condition as for vector bundles, but only for filtrations by Higgs subbundles. This recovers the usual definition that a Higgs bundle $(E, \varphi)$ is semistable if and only if for every Higgs subbundle $F$, we have that $\mu(F) \leq \mu(E)$.

The line bundle $\Ldet$ extends to the stack $\HCoh$, but now to test for semistability, we also allow filtrations by Higgs subsheaves. For any Higgs sheaf, the maximal torsion subsheaf is always a Higgs subsheaf and its slope is always $+ \infty$. This implies that a semistable Higgs sheaf is torsion-free and hence a Higgs bundle.  

Similarly, for anti-invariant Higgs bundles, we get a slope condition on the filtrations described by Proposition \ref{prop:filtonanti-invHiggs}. Suppose $F$ is a subbundle of degree $d$ of an anti-invariant Higgs bundle $E$. Because $F^{\perp_\sigma}$ is isomorphic to $\Ann(F)$, it follows that the degree of $F^{\perp_\sigma}$ is also $d$, and $\text{rk}(F) = \text{rk}(E) - \text{rk}(F)$. Because of this, if 

\begin{equation*}
        0 = F^0 \subset F^1 \subset \ldots \subset F^k \subseteq {F^k}^{\perp_\sigma} \subset \ldots {F^1}^{\perp_\sigma} \subset {F^0}^{\perp_\sigma} = E.
\end{equation*}
is a filtration of $E$, then Equation \ref{eq:weightBun} reduces to 
\begin{equation}
    \wt(f^* \Ldet) = 4 \sum\limits_{i = 1}^k \text{rk}(E^i)(\text{rk}(E)- \text{rk}(E^i))(\mu(E^i) - \mu(E / E^i)).
\end{equation}

From this formula, we can again reduce to the case of 1-step filtrations. In this case, a 1-step filtration looks like 
\begin{equation*}
    0 \subset F \subset F^{\perp_\sigma} \subset E.
\end{equation*}
combining this with the fact that $\text{deg}(E) = 0$ leads to the following definition from \cite[Definition 4.1]{zelaci2019moduli}.
\begin{definition}
    Let $(E, \varphi, \psi)$ be an anti-invariant Higgs bundle. A subbundle $F$ is \textbf{$\sigma$-isotropic} if $F \subset F^{\perp_\sigma}$ \footnote{In \cite{zelaci2019moduli}, this is defined as the induced morphism $\psi: \sigma^* F \rightarrow F^*$ being zero, but this is equivalent to the one given here.}.
    An anti-invariant Higgs bundle $(E, \varphi, \psi)$ is \textbf{semistable} if and only if for every $\sigma$-isotropic subbundle $F$, we have that 
    \begin{equation*}
        \text{deg}(F) \leq 0.
    \end{equation*}
\end{definition}

In \cite[Proposition 4.2]{zelaci2019moduli} it is shown that an anti-invariant bundle $(E, \psi)$ is semistable if and only if it is stable as a vector bundle. 

\begin{proposition}\label{lem:antiinvstab}
    An anti-invariant Higgs bundle $(E, \varphi, \psi)$ is semistable if and only if the underlying Higgs bundle $(E, \varphi)$ is semistable. 
\end{proposition}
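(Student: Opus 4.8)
The plan is to prove both implications by comparing destabilizing objects on the two sides. One direction is essentially immediate: if $(E,\varphi)$ is semistable as a Higgs bundle, then for any $\sigma$-isotropic Higgs subbundle $F$ (which is in particular a Higgs subbundle) we have $\deg(F) \le \deg(E) \cdot \mathrm{rk}(F)/\mathrm{rk}(E) = 0$, so $(E,\varphi,\psi)$ is semistable as an anti-invariant Higgs bundle. This uses only the definitions and the fact that $\deg(E)=0$.

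For the converse I would argue by contradiction. Suppose $(E,\varphi,\psi)$ is semistable as an anti-invariant Higgs bundle but $(E,\varphi)$ is not semistable as a Higgs bundle. Then there is a nonzero proper Higgs subbundle with positive degree, and hence a canonical maximal destabilizing Higgs subbundle $G \subset E$ — the first step of the Harder--Narasimhan filtration of $(E,\varphi)$ — with $\mu(G) > 0$ and $G$ Higgs-invariant. The key point is that the HN filtration is unique, so it is preserved by any automorphism of the Higgs bundle, and in particular is compatible with the anti-invariant structure. Concretely, applying $\psi$ and the compatibility Diagram \ref{eq:antiinvHiggs} turns the HN filtration of $(E,\varphi)$ into the HN filtration of $(\sigma^*E, \sigma^*\varphi)$ via pullback and into that of $(E^*, -{}^t\varphi)$; but the HN filtration of a dual Higgs bundle is the "orthogonal complement" filtration, and $\sigma^*(-)^{\perp_\sigma}$ was shown to correspond to $\mathrm{Ann}(-)$ under $\psi$. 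Chasing this through, the maximal destabilizing subbundle $G$ satisfies $G = (G')^{\perp_\sigma}$ for $G'$ the last step of the HN filtration, and since $\mu(G)>0 > \mu(E/G)$ the quotient $E/G$ (equivalently its dual, a subbundle of $E$) is itself destabilizing; one deduces that $G^{\perp_\sigma}$ has negative degree and $G^{\perp_\sigma} \subset G$, or equivalently that $G^{\perp_\sigma}$ is a $\sigma$-isotropic Higgs subbundle of negative degree — wait, that is the wrong sign, so instead one should observe that a small enough step $F^1$ of the HN filtration is $\sigma$-isotropic. Precisely: since the HN filtration is self-dual under $\perp_\sigma$ (up to the index reversal $F^p \leftrightarrow (F^{-p+1})^{\perp_\sigma}$ of Proposition \ref{prop:filtonanti-invHiggs}), there is an index $k$ with $F^k \subseteq (F^k)^{\perp_\sigma}$, i.e. $F^k$ is $\sigma$-isotropic, and because it is a nontrivial step of a destabilizing filtration of a degree-zero bundle it has $\deg(F^k) > 0$, contradicting semistability of the anti-invariant Higgs bundle.

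Alternatively, and perhaps more cleanly, I would phrase the converse through the weight computation already in the text. A morphism $f \colon \Theta \to \HBunsl$ with $f(0) \not\cong f(1)$ destabilizing $(E,\varphi)$ gives, via the forgetful morphism and Equation \ref{eq:weightBun}, a filtration by Higgs subbundles with $\wt(f^*\Ldet) > 0$. By replacing this filtration with its "symmetrization" $F^\bullet \cap (F^\bullet)^{\perp_\sigma}$-type construction — concretely, taking the filtration $0 \subset F^k \subset (F^k)^{\perp_\sigma} \subset E$ for an appropriate single step — one produces a filtration of the shape appearing in Proposition \ref{prop:filtonanti-invHiggs}, hence a morphism $\Theta \to \HBunantisym$ (resp. $\HBunantialt$), still of positive weight by the reduction to $1$-step filtrations discussed above, which destabilizes $(E,\varphi,\psi)$. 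The main obstacle is ensuring that this symmetrization genuinely produces a $\sigma$-isotropic Higgs subbundle of positive degree: one must check that if some step $F$ of the HN filtration of $(E,\varphi)$ has positive degree then either $F$ or $F \cap F^{\perp_\sigma}$ (which is again a Higgs subbundle, and $\sigma$-isotropic by construction) still has positive degree, using $\deg(F^{\perp_\sigma}) = \deg(\mathrm{Ann}(F)) = -\deg(F) + \deg(E) = -\deg(F)$ together with $\deg(E)=0$; the worst case, when $F$ and $F^{\perp_\sigma}$ are transverse, is handled by noting $\deg(F \cap F^{\perp_\sigma}) \ge \deg(F) + \deg(F^{\perp_\sigma}) - \deg(E) $ would fail the sign test, so one instead selects $F$ among the HN steps to be small enough to be isotropic, which is possible precisely because the HN filtration is self-dual under $\perp_\sigma$. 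I expect verifying this self-duality of the HN filtration — which follows from uniqueness of the HN filtration applied to the isomorphism $(E,\varphi) \cong (\sigma^*E^*, -{}^t(\sigma^*\varphi))$ coming from $\psi$ — to be the technical heart of the argument.
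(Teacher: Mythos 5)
Your proof is correct in substance, and the easy direction coincides with the paper's. For the converse the paper follows Zelaci's Proposition 4.2: starting from an arbitrary destabilizing Higgs subbundle $F$, it passes to the $\sigma$-isotropic Higgs subbundle $N$ generated by $F \cap F^{\perp_\sigma}$ and imports from loc.\ cit.\ the degree identity $\deg(N)=\deg(F)$ to get the contradiction. You instead prove that the Harder--Narasimhan filtration $0\subset G_1\subset\cdots\subset G_m=E$ of $(E,\varphi)$ is self-dual under $\perp_\sigma$, i.e.\ $G_i^{\perp_\sigma}=G_{m-i}$, by combining uniqueness of the HN filtration with the facts that $\psi$ is an isomorphism of Higgs bundles $(\sigma^*E,\sigma^*\varphi)\to(E^*,-{}^t\varphi)$ and that both $\sigma^*$ and dualization carry HN filtrations to HN filtrations; then $G_1\subseteq G_{m-1}=G_1^{\perp_\sigma}$ is a $\sigma$-isotropic Higgs subbundle of positive degree. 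Both arguments ultimately rest on the same phenomenon (the maximal destabilizing Higgs subbundle of an unstable $E$ is automatically $\sigma$-isotropic), but your mechanism is different and more self-contained: it avoids the degree computation borrowed from Zelaci at the cost of verifying the routine compatibilities of HN filtrations with $\sigma^*$ (using $\sigma^*L\cong L$) and with $(-)^*$ (using that $\Ann$ of a $\varphi$-invariant subbundle is ${}^t\varphi$-invariant). Two blemishes, neither fatal to the corrected main line: the detour in your first paragraph asserting that $G^{\perp_\sigma}$ has negative degree and is contained in $G$ is backwards, as you notice yourself; and in the alternative argument you write $\deg(\Ann(F))=-\deg(F)+\deg(E)$, whereas $\Ann(F)\cong(E/F)^*$ gives $\deg(\Ann(F))=\deg(F)-\deg(E)=\deg(F)$ --- this is the identity the paper actually uses, and with your sign $F^{\perp_\sigma}$ could never serve as a destabilizing object. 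I would suppress the second, half-worked ``symmetrization'' paragraph entirely and keep only the HN self-duality argument.
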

\begin{proof}
    The proof is analogous to the proof of \cite[Proposition 4.2]{zelaci2019moduli}. The if direction is clear. Suppose that $E$ is semistable as an anti-invariant Higgs bundle. If $F$ is any Higgs subbundle of $(E, \varphi)$, then $F^{\perp_\sigma}$ is also a Higgs subbundle. It follows that the bundle $N$ generated by $F \cap F^{\perp_\sigma}$ is also a Higgs subbundle, which is $\sigma$-isotropic by construction. In loc. cit. it is shown that $\text{deg}(N) = \text{deg}(F)$, so it follows that $\text{deg}(F) \leq 0$ because $E$ is semistable as an anti-invariant Higgs bundle. It follows that $E$ is also semistable as a Higgs bundle.
\end{proof}

\subsection{Stratification}

In fact, the locus of semistable anti-invariant Higgs bundles is just one part of a bigger structure, called a $\Theta$-stratification. These stratifications were introduced by Halpern-Leistner in \cite{halpern2014structure}, and form a generalisation of the HKKN stratification in GIT and the Harder-Narasimhan stratification for the stack of vector bundles. Here, we follow \cite[Definition 6.1]{alper2023existence}. We denote by $\text{Filt}(\mathcal{X})$ the stack of morphisms from $\Theta$ to $\mathcal{X}$. This is an algebraic stack by \cite[Proposition 1.1.2]{halpern2014structure}. There is a natural morphism $\text{ev}_1 \colon \text{Filt}(\mathcal{X}) \rightarrow \mathcal{X}$ given by sending a morphism $f \colon \Theta \rightarrow \mathcal{X}$ to $f(1)$.

\begin{definition}\cite[Definition 6.1]{alper2023existence}
    Let $\mathcal{X}$ be an algebraic stack over a locally Noetherian algebraic space $S$. 
    \begin{enumerate}
        \item A \textbf{$\Theta$-stratum} in $\mathcal{X}$ consists of a union of connected components $\mathcal{Z}^+ \subset \text{Filt}(\mathcal{X})$ such that $\text{ev}_1 \colon \mathcal{Z}^+ \rightarrow \mathcal{X}$ is a closed immersion. 
        \item A \textbf{$\Theta$-stratification} indexed by a totally ordered set $\Gamma$ is a cover of $\mathcal{X}$ by open substacks $\mathcal{X}_{\leq c}$ for $c \in \Gamma$ such that $\mathcal{X}_{\leq c} \subset \mathcal{X}_{\leq c'}$ for $c \leq c'$, along with a $\Theta$-stratum $\mathcal{Z}_c^+$ in $\mathcal{X}_{\leq c}$ such that the complement of the image in $\mathcal{X}_{\leq c}$ is 
        \begin{equation*}
            \mathcal{X}_{<c} := \bigcup\limits_{c\sp{\prime} < c} \mathcal{X}_{\leq c\sp{\prime}}.
        \end{equation*}
        We require that for all $x \in \mathcal{X}$, the set $\lbrace c \in \Gamma \, | \, x \in \mathcal{X}_{\leq c}$ has a minimal element. We assume that $\Gamma$ has a minimal element $0 \in \Gamma$.
        \item We say that a $\Theta$-stratification is \textbf{well-ordered} if for any $x \in \mathcal{X}$ the totally ordered set $\lbrace c \in \Gamma \, | \, \text{ev}_1(\mathcal{Z}_c^+) \cap \overline{\lbrace x \rbrace} \neq \emptyset \rbrace$ is well-ordered.
    \end{enumerate}
\end{definition}

The open substack $\mathcal{X}^{ss} := \mathcal{X}_{\leq 0}$ is called the \textbf{semistable locus}. For any $x \in \mathcal{X} / \mathcal{X}^{ss}$, it follows from the definition that there is a unique $c$ such that $x \in \text{ev}_1(\mathcal{Z}_c^+)$. This gives rise to a canonical map $f: \Theta \rightarrow \mathcal{X}$ with $f(1) \cong x$, this is called an \textbf{HN filtration}. Our goal in this section is to show that the stack of anti-invariant Higgs bundles admits a well-ordered $\Theta$-stratification such that the semistable locus agrees with our earlier definition of semistable anti-invariant Higgs bundles. We will show this for $\sigma$-symmetric bundles, the case of $\sigma$-alternating is completely analogous.

One way of constructing $\Theta$-stratifications is through standard numerical invariants, as explained in \cite[Section 4]{halpern2014structure}. We will not explain the theory of numerical invariants in general, but explain a natural standard numerical invariant on the stack of anti-invariant Higgs bundles and how it gives rise to a $\Theta$-stratification. 

Consider a morphism $ f \colon \Theta \rightarrow \Bunsl$. As explained in \cite[Lemma 1.13]{heinloth2018hilbert}, such a morphism is induced by a cocharacter $\lambda_f \colon \Gm \rightarrow \text{SL}(n)$. After fixing a conjugation invariant norm $\lVert \cdot \rVert$, we can define a locally constant function

\begin{equation*}
    \mu \colon  \lvert \text{Filt}(\HBunantisym) \rvert \rightarrow \mathbb{R} \colon f \mapsto \frac{\wt f^* \Ldet}{\lVert \lambda_f \rVert},
\end{equation*}
where $\lambda_f$ is the cocharacter induced by the composition $f \colon \Theta \rightarrow \HBunantisym$.

Using the function $\mu$, we can define a function
\begin{equation*}
\begin{split}
    M^\mu \colon \HBunantisym \rightarrow \mathbb{R}_{\geq 0} \colon
\\
    x \mapsto \max{\lbrace 0 \rbrace \cup \sup \lbrace \mu(f) \, | \, f \in \text{Filt}(\HBunantisym) \text{ such that } f(1) \cong x \rbrace}
\end{split}
\end{equation*}

For $c \in \mathbb{R}_{\geq 0}$, the corresponding open substack of $\HBunantisym$ will be given by 
\begin{equation*}
    {\text{HBun}^{\sigma, +}_{\text{SL}_n(C)}}_{\leq c}  = \lbrace x \in \HBunantisym \, | \, M^\mu(x) \leq c \rbrace.
\end{equation*}

To verify that this forms a $\Theta$-stratification, we will use the following theorem. 

\begin{theorem}\cite[Theorem 4.5.1]{halpern2014structure} \label{theorem:stratcriteria}
    The numerical invariant $\mu$ defines a $\Theta$-stratification if and only if it satisfies the following two properties: 
    \begin{enumerate}
        \item \textbf{Simplified HN-Specialization Property}: For any essentially of finite type DVR $R$ with fraction field $K$ and residue field $\kappa$, and any map $\xi \colon \Spec R \rightarrow \HBunantisym$ whose generic point is unstable and a HN filtration $f_K$ (a filtration $f$ such that $\mu(f) = M^\mu(f(1))$), one has 
        \begin{equation*}
            \mu(f_K) \leq M^\mu(\xi_{\Spec \kappa}),
        \end{equation*}
        and when equality holds there is an extension of DVRs $R \subset R'$ with fraction fields $K \subset K'$ such that $f_K|_{K'}$ extends to $\Spec R'$.
        \item \textbf{HN-Boundedness}: For any map $\xi \colon T \rightarrow \HBunantisym$ from a finite type affine scheme, there exists a quasicompact substack $\mathcal{X}' \subset \HBunantisym$ such that for all finite type points $p \in T(k)$, and filtrations $f$ of $p$, there is another filtration $f'$ with $\mu(f) \leq \mu(f')$ and $f'(0) \in \mathcal{X}'$.
    \end{enumerate}
\end{theorem}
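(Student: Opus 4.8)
The plan is to prove both implications of the equivalence, treating $\mu$ as a numerical invariant on a general algebraic stack $\mathcal{X}$ (here $\mathcal{X} = \HBunantisym$), with the candidate stratification given by the sublevel sets $\mathcal{X}_{\leq c} = \{ x : M^\mu(x) \leq c \}$ and the strata $\mathcal{Z}_c^+ \subset \text{Filt}(\mathcal{X})$ consisting of the maximizing filtrations, i.e. those $f$ with $\mu(f) = M^\mu(f(1)) = c$. The content of the theorem is that these data assemble into a $\Theta$-stratification precisely when the two conditions hold, so the proof amounts to reconciling the definition of a $\Theta$-stratum with the valuative and boundedness hypotheses.

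For the \emph{only if} direction I would start from a given $\Theta$-stratification and extract the two properties. HN-Boundedness is the more formal: for a finite-type $T \to \mathcal{X}$, quasi-compactness of $T$ together with the minimal-element requirement on $\{ c : x \in \mathcal{X}_{\leq c} \}$ forces the image to meet only finitely many strata; choosing $\mathcal{X}'$ to be a quasi-compact open containing the corresponding associated-graded loci $\text{ev}_0(\mathcal{Z}_c^+)$ and replacing an arbitrary filtration $f$ of a point $p$ by its HN filtration $f'$ (which maximizes $\mu$, so $\mu(f) \leq \mu(f')$) puts $f'(0) \in \mathcal{X}'$. For HN-Specialization, the inequality $\mu(f_K) \leq M^\mu(\xi_\kappa)$ is upper semicontinuity of $M^\mu$, immediate from openness of the $\mathcal{X}_{\leq c}$; and the equality-plus-extension clause is exactly the valuative criterion applied to the closed immersion $\text{ev}_1 \colon \mathcal{Z}_c^+ \hookrightarrow \mathcal{X}_{\leq c}$, since equality means the generic HN filtration and the special fiber lie in the same stratum.

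The \emph{if} direction is the substantive one, and I would build the stratification in stages. First, HN-Boundedness reduces every question to a quasi-compact finite-type substack of $\text{Filt}(\mathcal{X})$, which guarantees that the supremum defining $M^\mu$ is attained by an actual filtration (existence of an HN filtration) and that $\mu$ takes only finitely many values in each bounded range. Second, the HN-Specialization inequality shows $M^\mu$ is upper semicontinuous, so the $\mathcal{X}_{\leq c}$ are open and cover $\mathcal{X}$. Third, and this is the heart of the matter, one must show the maximizing filtration is unique up to the rescaling $\Theta$ allows; this makes $\text{ev}_1|_{\mathcal{Z}_c^+}$ a monomorphism, and combined with the extension clause of HN-Specialization it verifies the valuative criterion of properness, so that $\text{ev}_1 \colon \mathcal{Z}_c^+ \to \mathcal{X}_{\leq c}$ is a proper monomorphism of finite type, hence a closed immersion. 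Finally, unwinding the definition of $M^\mu$ shows the complement of $\text{ev}_1(\mathcal{Z}_c^+)$ in $\mathcal{X}_{\leq c}$ is precisely $\mathcal{X}_{<c}$, and finiteness of the value set yields the minimal-element and well-ordering conditions.

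The hardest step is the uniqueness of the maximizing filtration, and the resulting closed-immersion property, because uniqueness is not a pointwise statement but must hold in families. Following Halpern-Leistner, I would prove it by studying degenerations in the iterated mapping stack $\text{Map}(\Theta \times \Theta, \mathcal{X})$: given two filtrations $f_0, f_1$ of the same point both attaining $\mu = c$, one interpolates between them inside $\Theta \times \Theta$ and applies the equality case of HN-Specialization along the resulting family to force $f_0 \cong f_1$. Making this interpolation precise, and checking that the boundedness input is strong enough to keep the whole construction inside a finite-type locus so that the valuative criteria genuinely apply, is where essentially all the difficulty lies; the remaining manipulations of $M^\mu$ and of the $\Theta$-stratum structure are formal.
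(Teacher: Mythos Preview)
The paper does not prove this theorem at all: it is stated as a citation of \cite[Theorem 4.5.1]{halpern2014structure} and is used as a black box, with only a remark explaining why the additional hypotheses in Halpern-Leistner's formulation are automatically satisfied for the numerical invariant at hand. There is therefore no ``paper's own proof'' to compare your proposal against.

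Your proposal is a reasonable high-level sketch of the argument in \cite{halpern2014structure}, but it is not a proof in any meaningful sense: it is a plan, and you yourself flag that the hardest step --- uniqueness of the maximizing filtration and the resulting closed-immersion property of $\text{ev}_1$ --- is left as ``where essentially all the difficulty lies,'' with only a gesture toward the $\Theta \times \Theta$ interpolation technique. That step genuinely requires the convexity/strict-concavity properties of the numerical invariant (what Halpern-Leistner encodes in the notion of a \emph{standard} numerical invariant built from a line bundle and a norm on cocharacters), and your sketch does not invoke these. Without them the uniqueness argument does not go through, and the ``monomorphism plus proper implies closed immersion'' step has no input. If your intent is to supply a self-contained proof rather than cite the result, you would need to state and verify these convexity hypotheses for $\mu = \wt(f^*\mathcal{L}_{\det})/\lVert \lambda_f \rVert$ and then carry out the recursive argument on $\text{Filt}(\text{Filt}(\mathcal{X}))$ in detail; otherwise, citing the theorem as the paper does is the appropriate move.
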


\begin{remark}
    The theorem as stated in \cite{halpern2014structure} contains some additional hypotheses on $\mu$. However, our numerical invariant is of the form described in Definition 4.1.14 of loc. cit. and by Example 4.5.2 of loc. cit. it follows that these additional hypotheses are automatically satisfied.
\end{remark}
The rest of this section will be dedicated to verifying the two properties in this theorem. We will first verify the existence and uniqueness of HN filtrations. The following result is completely analogous to the case of ordinary vector bundles. 

\begin{lemma}\label{lem:Hnfiltcriteria}
    Let $(E, \varphi, \psi)$ be an unstable $\sigma$-symmetric anti-invariant Higgs bundle and let 
    \begin{equation*}
        0 = F^0 \subset F^1 \subset \ldots \subset F^k \subseteq {F^k}^{\perp_\sigma} \subset \ldots {F^1}^{\perp_\sigma} \subset {F^0}^{\perp_\sigma} = E.
    \end{equation*}
    be a filtration by Higgs subbundles denoted by $f$. Then this filtration maximises $\mu(f) = M^\mu((E, \varphi, \psi))$ if and only if
    \begin{itemize}
        \item $\mu(E/F^k) < \mu(F^k/F^{k-1}) < \ldots < \mu(F_2/F_1) < \mu(F_1)$,
        \item and $F^j/F^{j-1} \oplus {F^{j-1}}^{\perp_\sigma}/{F^j}^{\perp_\sigma}$ for $j=1, \ldots k$, and ${F^k}^{\perp_\sigma} / F^k$ are semistable $\sigma$-symmetric anti-invariant Higgs bundles.
    \end{itemize}
\end{lemma}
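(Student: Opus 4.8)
The plan is to reduce the whole statement to the analogous, classical fact about the ordinary Harder--Narasimhan filtration of the underlying Higgs bundle $(E,\varphi)$. First I would note that by Proposition~\ref{prop:filtonanti-invHiggs} a filtration $f$ as in the statement is nothing but a palindromic filtration of $(E,\varphi)$ by Higgs subbundles, and that the numerical invariant $\mu$ on $\HBunantisym$ is the pullback, along the forgetful morphism to $\Bunsl$, of the corresponding invariant on the stack of ordinary $\mathrm{SL}_n$-Higgs bundles: the weight $\wt(f^{*}\Ldet)$ depends only on the underlying filtration of the Higgs bundle (it is given by the same Equation~\ref{eq:weightBun}), and the cocharacter $\lambda_f$, hence its norm, is by construction the one of the underlying $\mathrm{SL}_n$-filtration. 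Therefore $M^{\mu}\big((E,\varphi,\psi)\big)$ is the supremum of the Higgs-side invariant over \emph{palindromic} filtrations of $(E,\varphi)$, and $f$ realizes it if and only if its underlying Higgs filtration realizes this restricted supremum.

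The crucial point is that the Harder--Narasimhan filtration $0=G^{0}\subset G^{1}\subset\dots\subset G^{m}=E$ of the Higgs bundle $(E,\varphi)$ is automatically palindromic, i.e.\ $G^{i}=(G^{m-i})^{\perp_{\sigma}}$. This follows from uniqueness of the HN filtration: the isomorphism $\psi$ is an isomorphism of Higgs bundles $\sigma^{*}(E,\varphi)\xrightarrow{\ \sim\ }(E^{*},-{}^{t}\varphi)$; the HN filtration of $\sigma^{*}(E,\varphi)$ has terms $\sigma^{*}G^{i}$, while that of $(E^{*},-{}^{t}\varphi)$ has terms $\mathrm{Ann}(G^{m-i})$ (its successive quotients being the duals, in reverse order, of those of $(E,\varphi)$, so the slopes are still strictly decreasing). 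Comparing the two filtrations under $\psi$ and using uniqueness gives $\psi(\sigma^{*}G^{i})=\mathrm{Ann}(G^{m-i})$, and since $\psi$ identifies $\sigma^{*}F^{\perp_{\sigma}}$ with $\mathrm{Ann}(F)$ this means exactly $G^{i}=(G^{m-i})^{\perp_{\sigma}}$. As the HN filtration is the unique filtration of $(E,\varphi)$ maximizing the Higgs-side invariant (the classical statement, which the text records as ``completely analogous to the case of ordinary vector bundles'') and it is palindromic, it is also the unique palindromic such filtration; hence $f$ satisfies $\mu(f)=M^{\mu}$ exactly when its underlying filtration is the HN filtration of $(E,\varphi)$.

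It then remains to rewrite the classical characterization of the HN filtration --- strictly decreasing slopes of successive quotients, all of which are semistable --- in terms of the two bulleted conditions, and this bookkeeping is the step I expect to require the most care. Using the remark following Proposition~\ref{prop:filtonanti-invHiggs}, the successive quotients of a palindromic filtration are $F^{1},F^{2}/F^{1},\dots,F^{k}/F^{k-1}$, then the self-dual piece $(F^{k})^{\perp_{\sigma}}/F^{k}$, then $(F^{k-1})^{\perp_{\sigma}}/(F^{k})^{\perp_{\sigma}}\cong\sigma^{*}\big((F^{k}/F^{k-1})^{*}\big)$, and so on down to $E/(F^{1})^{\perp_{\sigma}}$; since a Higgs bundle and $\sigma^{*}$ of its dual have opposite slopes and are simultaneously semistable, the strict decrease of all these slopes is equivalent to the single chain $\mu(E/F^{k})<\mu(F^{k}/F^{k-1})<\dots<\mu(F^{1})$, and, invoking Proposition~\ref{lem:antiinvstab} together with the fact that a direct sum of semistable Higgs bundles of equal slope is semistable, semistability of all the successive quotients as Higgs bundles is equivalent to $(F^{k})^{\perp_{\sigma}}/F^{k}$ and each $F^{j}/F^{j-1}\oplus(F^{j-1})^{\perp_{\sigma}}/(F^{j})^{\perp_{\sigma}}$ being semistable anti-invariant Higgs bundles. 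Apart from this translation, the argument is a routine adaptation of the classical HN argument, the one genuinely new ingredient being the palindromy of the HN filtration established above.
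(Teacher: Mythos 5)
The paper itself offers no proof of this lemma---it is introduced only with the remark that it is ``completely analogous to the case of ordinary vector bundles''---so your write-up is supplying an argument the paper leaves implicit. Your route (reduce to the ordinary Higgs-bundle statement by observing that the numerical invariant on $\HBunantisym$ is pulled back from $\HBunsl$, and that the HN filtration of $(E,\varphi)$ is automatically palindromic, i.e.\ $G^{i}=(G^{m-i})^{\perp_\sigma}$, by uniqueness of HN filtrations applied to the Higgs-bundle isomorphism $\psi\colon\sigma^*(E,\varphi)\to(E^*,-{}^t\varphi)$) is sound and is in fact the same mechanism the paper deploys one proposition later, where it observes that the anti-invariant HN filtration ``is also the Harder--Narasimhan filtration of the Higgs bundle $(E,\varphi)$'' to get uniqueness. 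So your argument is a legitimate, and arguably cleaner, substitute for the convexity analysis of Equation~\ref{eq:weightBun} restricted to palindromic filtrations that the phrase ``completely analogous'' presumably points to. (Both you and the paper elide the choice of weights on the flag: the normalized invariant $\wt(f^*\Ldet)/\lVert\lambda_f\rVert$ is maximized only for a specific rescaling class of weights, so strictly speaking the characterization is of the underlying flag; this is standard and harmless.)

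There is, however, one concrete misstep in your final translation. You invoke ``the fact that a direct sum of semistable Higgs bundles of equal slope is semistable'' to pass between semistability of the individual quotients and semistability of the hyperbolic pieces $F^{j}/F^{j-1}\oplus(F^{j-1})^{\perp_\sigma}/(F^{j})^{\perp_\sigma}$. But the two summands have \emph{opposite} slopes $\mu_j$ and $-\mu_j$, and the first bullet forces $\mu_j\neq 0$; so the hyperbolic piece is never semistable as a plain Higgs bundle, and---testing against the $\sigma$-isotropic subbundle $F^{j}/F^{j-1}$ of positive degree---it is not semistable as an anti-invariant Higgs bundle under the paper's literal definition either. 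As the paper's proof of the existence/uniqueness proposition makes clear, the intended meaning of the second bullet is semistability in the graded (Levi) sense, i.e.\ that each summand $F^{j}/F^{j-1}$ is individually a semistable Higgs bundle (equivalently, by Proposition~\ref{lem:antiinvstab}, that the middle piece $(F^{k})^{\perp_\sigma}/F^{k}$ is a semistable anti-invariant Higgs bundle and each off-center graded piece is a semistable Higgs bundle). With that reading your equivalence goes through and matches the classical HN characterization; as written, the ``equal slope'' step does not apply and should be replaced by this graded interpretation.
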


Using this proposition, we can prove completely analogously to the case of ordinary vector bundles \cite[Proposition 1.3.9]{harder1975cohomology} and orthogonal/symplectic bundles \cite[Proposition 3.2]{choe2022minimal} that HN filtrations exist and are unique. 
\begin{proposition}
     Let $(E, \varphi, \psi)$ be an unstable $\sigma$-symmetric anti-invariant Higgs bundle. Then there exists a unique HN filtration of $(E, \varphi, \psi)$.
\end{proposition}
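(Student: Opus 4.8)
The plan is to mirror the classical Harder–Narasimhan argument, using Lemma~\ref{lem:Hnfiltcriteria} as the characterization of the filtration we seek. The proof has two halves: existence and uniqueness. For existence, I would induct on the rank of $E$. First I would extract a \textbf{maximal $\sigma$-isotropic destabilizing subbundle}. Among all $\sigma$-isotropic Higgs subbundles $F$ with $\deg(F) > 0$, there is a standard two-step selection: first pass to those of maximal slope $\mu(F)$ (such a maximum exists and is attained, by the usual boundedness of subsheaves of bounded rank in a fixed bundle, applied within the category of Higgs subsheaves — torsion-freeness of quotients being automatic as in the earlier lemmas), and among those of maximal slope, take one of maximal rank. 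Call it $F^1$. One checks, exactly as in \cite[Proof of Proposition~4.2]{zelaci2019moduli} and \cite[Proposition~3.2]{choe2022minimal}, that this $F^1$ is unique with these two extremal properties, that it is semistable as a Higgs bundle, and that $\mu(F^1) > \mu(E/F^1)$.

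Next I would form the quotient anti-invariant Higgs bundle ${F^1}^{\perp_\sigma}/F^1$: since $F^1$ is $\sigma$-isotropic, $\psi$ descends to a $\sigma$-symmetric isomorphism on this quotient, and the Higgs field descends as well, so ${F^1}^{\perp_\sigma}/F^1$ is again a $\sigma$-symmetric anti-invariant Higgs bundle of strictly smaller rank. By the inductive hypothesis it has a unique HN filtration $0 = G^0 \subset G^1 \subset \cdots \subset {G^1}^{\perp_\sigma} \subset \cdots \subset {F^1}^{\perp_\sigma}/F^1$ (or it is already semistable, terminating the recursion). Pulling these back through the surjection ${F^1}^{\perp_\sigma} \twoheadrightarrow {F^1}^{\perp_\sigma}/F^1$ and inserting $F^1$ at the bottom produces a filtration of $E$ of the form demanded by Proposition~\ref{prop:filtonanti-invHiggs}. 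I would then verify that it satisfies the two bullet points of Lemma~\ref{lem:Hnfiltcriteria}: the slope inequalities hold because $\mu(F^1)$ exceeds every slope appearing in the HN filtration of the quotient (this uses maximality of $\mu(F^1)$ among \emph{all} $\sigma$-isotropic destabilizers, together with the fact that any destabilizer of the quotient lifts to one of $E$), and the semistability of the graded pieces is inherited from $F^1$ being semistable and from the inductive HN property for the quotient. By Lemma~\ref{lem:Hnfiltcriteria} this filtration maximizes $\mu$, hence is an HN filtration.

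For uniqueness, suppose $f$ and $f'$ are two HN filtrations. By Lemma~\ref{lem:Hnfiltcriteria} the bottom step $F^1$ of either one is a semistable Higgs subbundle with $\mu(F^1) = \mu(F_1/F_0)$ equal to the top slope in the chain, and this top slope is a numerical invariant of $(E,\varphi,\psi)$ since $\mu(f) = \mu(f') = M^\mu((E,\varphi,\psi))$ forces both strict-inequality chains to have the same maximal slope. A standard argument — if $F^1$ and ${F'}^1$ are both semistable of the same slope $\mu_{\max}$, then any nonzero Higgs map between them, in particular the inclusions into $E$ and the compositions $F^1 \hookrightarrow E \twoheadrightarrow E/{F'}^1$, must vanish because $E/{F'}^1$ has all its HN slopes $< \mu_{\max}$ — shows $F^1 \subseteq {F'}^1$, and symmetrically ${F'}^1 \subseteq F^1$, so $F^1 = {F'}^1$. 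The two filtrations then agree on the quotient ${F^1}^{\perp_\sigma}/F^1$ by induction, and pulling back is injective on subbundles, so $f = f'$.

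\textbf{The main obstacle} I anticipate is making the descent and lifting arguments for the $\perp_\sigma$-quotient fully rigorous in the Higgs setting: one must check carefully that $F^1$ being $\sigma$-isotropic and $\varphi$-invariant really does force $\psi$ and $\varphi$ to induce the claimed structure on ${F^1}^{\perp_\sigma}/F^1$ (this is where the compatibility Diagram~\ref{eq:antiinvHiggs} is used), and that the correspondence ``Higgs destabilizers of $E$ containing $F^1$'' $\leftrightarrow$ ``Higgs destabilizers of ${F^1}^{\perp_\sigma}/F^1$'' preserves the relevant slope comparisons — the bookkeeping with $\perp_\sigma$ and ranks/degrees (using $\deg F^{\perp_\sigma} = \deg F$ and $\mathrm{rk}\, F^{\perp_\sigma} = \mathrm{rk}\, E - \mathrm{rk}\, F$, already noted in the text) is routine but error-prone. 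Everything else is a faithful transcription of \cite[Proposition~1.3.9]{harder1975cohomology} and \cite[Proposition~3.2]{choe2022minimal}, with ``subbundle'' replaced by ``Higgs subbundle'' throughout.
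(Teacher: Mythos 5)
Your proposal is correct and follows essentially the same route as the paper: both build the filtration greedily from the $\sigma$-isotropic Higgs subbundle of maximal slope and, among those, maximal rank, and then continue on what remains (the paper phrases this as iterating over $\sigma$-isotropic subbundles containing $E_1$ rather than as induction on the rank of ${F^1}^{\perp_\sigma}/F^1$, but these are the same construction). The only substantive difference is the uniqueness step: you run the standard Hom-vanishing argument between semistable graded pieces directly, whereas the paper observes that the resulting chain has semistable Higgs quotients of strictly decreasing slopes and is therefore the Harder--Narasimhan filtration of the underlying Higgs bundle $(E,\varphi)$, whose uniqueness is already known --- a slightly quicker shortcut, but both arguments are valid.
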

\begin{proof}
    We let $E_1 \subset E$ be a $\sigma$-isotropic Higgs subbundle such that 
    the slope of $E_1$ is maximal among the $\sigma$-isotropic Higgs subbundles of $E$, and the rank of $E_1$ is maximal among those subbundles maximising the slope. We can then continue this process for subbundles containing $E_1$. Like this we arrive at a filtration by  $\sigma$-isotropic Higgs subbundles
    \begin{equation*}
        0 \subset E_1 \subset E_2 \subset \ldots E_{k} \subset E
    \end{equation*}
    such that successive quotients are semistable Higgs bundles, their degrees are strictly decreasing and if $F$ is a $\sigma$-isotropic subbundle containing $E_k$, then $\mu(F/E_k) \leq \mu(E/E_k)$. The last point immediately implies that ${E_k}^{\perp_\sigma}/E_k$ is a semistable $\sigma$-symmetric anti-invariant Higgs bundle. For $j \neq k$, the bundle $E_j / E_{j-1} \oplus {E_{j-1}}^{\perp_\sigma} / {E_j}^{\perp_\sigma}$ is of the form $F \oplus \sigma^* F^*$, and this is semistable as an anti-invariant Higgs bundle if and only if $F$ is semistable as a Higgs bundle. We thus obtain a filtration of the form
    \begin{equation*}
        0 \subset E_1  \subset \ldots E_{k} \subset {E_k}^{\perp_\sigma} \ldots \subset {E_1}^{\perp_\sigma} \subset E,
    \end{equation*}
    satisfying the criteria of Lemma \ref{lem:Hnfiltcriteria}.
    Because all quotients are semistable Higgs bundles and the slopes are decreasing, it immediately follows that this filtration is also the Harder-Narasimhan filtration of the Higgs bundle $(E, \varphi)$. This proves the uniqueness.
\end{proof}

Once we know that Harder-Narasimhan filtrations exist and are unique, one of the two criteria for the existence of a $\Theta$-stratification is immediate. 
\begin{proposition}\label{prop:HNbounded}
    The stack $\HBunantisym$ satisfies HN Boundedness. 
\end{proposition}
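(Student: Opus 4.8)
The plan is to verify the HN-boundedness condition in Theorem \ref{theorem:stratcriteria}(2). Fix a map $\xi \colon T \to \HBunantisym$ from a finite type affine scheme, a finite type point $p$ corresponding to an anti-invariant Higgs bundle $(E,\varphi,\psi)$, and a filtration $f$ of $p$. First I would observe that $\mu(f) \le M^\mu(p)$ by the very definition of $M^\mu$, so it is always legitimate to replace $f$ by a filtration realising this bound: if $p$ is semistable then $M^\mu(p) = 0$ and the trivial filtration works, its associated graded being $p$ itself; if $p$ is unstable then the preceding proposition produces a \emph{unique} HN filtration $f_p$, and by Lemma \ref{lem:Hnfiltcriteria} this filtration satisfies $\mu(f_p) = M^\mu(p) \ge \mu(f)$. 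So the problem reduces to exhibiting a quasi-compact substack $\mathcal{X}' \subset \HBunantisym$ containing the image of $\xi$ together with all associated gradeds $f_p(0)$ for $p$ an unstable finite type point of $T$.

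Next I would reduce the boundedness of $\{f_p(0)\}$ to a classical statement about ordinary Higgs bundles. By the preceding proposition, the HN filtration of $(E,\varphi,\psi)$, written $0 \subset E_1 \subset \dots \subset E_k \subset E_k^{\perp_\sigma} \subset \dots \subset E_1^{\perp_\sigma} \subset E$, restricts to the Harder--Narasimhan filtration $0 \subset E_1 \subset \dots \subset E_k \subset E$ of the underlying Higgs bundle $(E,\varphi)$. Composing $\xi$ with the forgetful morphism $\text{Forget}_\psi \colon \HBunantisym \to \HBunsl$ yields a bounded family of Higgs bundles, and since a destabilising Higgs subbundle is in particular a subsheaf of $E$, the maximal and minimal slopes appearing in the Harder--Narasimhan filtrations are bounded above and below uniformly over $T$; hence the Hilbert polynomials of the terms $E_i$ take finitely many values, and by Grothendieck's boundedness of the relevant Quot schemes the possible $E_i$ — and therefore the associated graded Higgs bundles $\bigoplus_i E_i/E_{i-1}$ — form a bounded family. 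Concretely I would stratify $T$ by finitely many locally closed subschemes on which the Harder--Narasimhan type is constant and on which the filtration is cut out by a morphism to a product of relative Quot schemes, so that it varies algebraically.

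Finally I would upgrade this to the anti-invariant setting. The associated graded $f_p(0)$ is the anti-invariant Higgs bundle whose underlying Higgs bundle is $\bigoplus_{j} E_j/E_{j-1} \oplus E_k^{\perp_\sigma}/E_k \oplus \bigoplus_{j} E_{j-1}^{\perp_\sigma}/E_j^{\perp_\sigma}$ and whose isomorphism $\psi^{\mathrm{gr}}$ is the one induced by $\psi$ via the identifications recorded in the remark after Proposition \ref{prop:filtonanti-invHiggs}. Since $F^{\perp_\sigma} \cong \Ann(F)$ has the same degree as $F$ and complementary rank, the terms $E_j^{\perp_\sigma}$ also lie in a bounded family, so over each stratum of $T$ the assignment $p \mapsto f_p(0)$ is a morphism to $\HBunantisym$ (the isomorphism $\psi^{\mathrm{gr}}$ being produced functorially from $\psi$) with quasi-compact image; taking $\mathcal{X}'$ to be the union of these finitely many images with the image of $\xi$ finishes the proof. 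The step I expect to demand the most care is making the variation of the Harder--Narasimhan filtration in families genuinely algebraic/constructible — producing the stratification of $T$ and the resulting morphisms to $\text{Filt}(\HBunantisym)$ — and checking that $\psi^{\mathrm{gr}}$ is obtained functorially so that no new moduli enter; both follow from the already established existence and uniqueness of HN filtrations together with standard boundedness for coherent sheaves on a curve.
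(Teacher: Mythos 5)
Your proof is correct and follows essentially the same route as the paper: replace an arbitrary filtration by the (unique) HN filtration, which maximises $\mu$, and then observe that only finitely many HN types occur over a finite type base, so the associated gradeds land in a quasicompact substack $\mathcal{X}'$. The paper obtains the quasicompactness more directly from the known quasicompactness of the stacks of semistable Higgs bundles of fixed rank and degree (and the fact that the preimage under the forgetful morphism stays quasicompact), so your stratification of $T$ and the algebraicity of the HN filtration in families are not actually needed --- the HN-boundedness condition is checked pointwise over finite type points $p \in T(k)$.
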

\begin{proof}
    Suppose $\xi : T \rightarrow \HBunantisym$ is a map from a finite type affine scheme. We know that only finitely many HN types of Higgs bundles occur in this family.  Moreover, for a fixed HN type $\tau$ the stack parametrising graded Higgs bundles of this HN type is a product of stacks of semistable Higgs bundles, which are known to be quasicompact, and thus its preimage in $\HBunantisym$ is as well. Let $\mathcal{X}'$ be the union of these finitely many quasicompact stacks. For a finite type point $p \in T(k)$, if $f'$ is the HN filtration of $\xi(p)$, then it follows by definition that for any other filtration $f$ of $\xi(p)$, we have $\mu(f) \leq \mu(f')$, and $f'(0) \in \mathcal{X}'$ by construction.
\end{proof}

The HN-specialization property can be deduced from that of ordinary bundles. 

\begin{proposition}\label{prop:HNspec}
    The stack $\HBunantisym$ satisfies the simplified HN-Specialisation property.    
\end{proposition}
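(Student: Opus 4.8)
The plan is to reduce the statement to the corresponding property for ordinary $\text{SL}_n$-Higgs bundles, via the forgetful morphism $\text{Forget}_\psi \colon \HBunantisym \to \HBunsl$. The first thing I would establish is that $\text{Forget}_\psi$ is compatible with the numerical invariant $\mu$. The line bundle on $\HBunantisym$ defining $\mu$ is pulled back from $\Ldet$ on $\HBunsl$; and for $f \colon \Theta \to \HBunantisym$ corresponding, via Proposition \ref{prop:filtonanti-invHiggs}, to a filtration $0 = F^0 \subset F^1 \subset \dots \subset F^k \subseteq (F^k)^{\perp_\sigma} \subset \dots \subset (F^0)^{\perp_\sigma} = E$, the weight $\wt(f^* \Ldet)$ given for the composite morphism to $\HBunsl$ by Equation \ref{eq:weightBun} equals the reduced expression $4 \sum_{i=1}^k \text{rk}(E^i)(\text{rk}(E) - \text{rk}(E^i))(\mu(E^i) - \mu(E/E^i))$ obtained in the semistability discussion for the morphism to $\HBunantisym$ --- the factor $2$ versus $4$ being absorbed by the doubling of steps from the $\perp_\sigma$-symmetry of the filtration. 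Since the cocharacter $\lambda_f$ of $\text{SL}_n$, hence $\lVert \lambda_f \rVert$, depends only on the ranks and weights of the graded pieces, it is unchanged by $\text{Forget}_\psi$, so $\mu(f) = \mu(\text{Forget}_\psi \circ f)$. Combining this with the previous proposition, whose proof shows that the HN filtration of $(E, \varphi, \psi)$ is exactly the Harder--Narasimhan filtration of the underlying Higgs bundle $(E, \varphi)$ and is of $\perp_\sigma$-form, I get that $\text{Forget}_\psi$ carries HN filtrations to HN filtrations and that $M^\mu(x) = M^\mu(\text{Forget}_\psi(x))$ for every $x \in \HBunantisym$ (consistently with Proposition \ref{lem:antiinvstab} on the semistable locus).

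Granting this dictionary, the inequality in the simplified HN-Specialization property is immediate. Given $\xi \colon \Spec R \to \HBunantisym$ with unstable generic fibre and HN filtration $f_K$, set $\bar\xi = \text{Forget}_\psi \circ \xi$ and $\bar f_K = \text{Forget}_\psi \circ f_K$, the HN filtration of the Higgs bundle $\bar\xi_K$. Then
\[
\mu(f_K) = \mu(\bar f_K) \leq M^\mu(\bar\xi_{\Spec \kappa}) = M^\mu(\xi_{\Spec \kappa}),
\]
the middle inequality being the simplified HN-Specialization property for $\HBunsl$; that property holds because $\HBunsl$ carries its Harder--Narasimhan $\Theta$-stratification, which one proves by the same Theorem \ref{theorem:stratcriteria} --- HN-boundedness exactly as in Proposition \ref{prop:HNbounded}, using quasicompactness of the stack of semistable Higgs bundles of fixed invariants --- or alternatively reads off from Nitsure's construction.

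For the equality case, the HN-Specialization property for $\HBunsl$ supplies an extension of essentially-finite-type DVRs $R \subset R'$, with fraction fields $K \subset K'$, and an extension of $\bar f_K|_{K'}$ to a morphism $\bar f_{R'} \colon \Spec R' \to \text{Filt}(\HBunsl)$, i.e. a filtration of the family $\bar\xi|_{R'}$ by Higgs subbundles over $\Spec R'$. The family $\xi|_{R'}$ already carries the isomorphism $\psi_{R'}$, so it is enough to check that $\bar f_{R'}$ has $\perp_\sigma$-form with respect to $\psi_{R'}$; then by the family version of Proposition \ref{prop:filtonanti-invHiggs} it lifts to $\Spec R' \to \text{Filt}(\HBunantisym)$, extending $f_K|_{K'}$ as required. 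For each $p$, both $F^p_{R'}$ and $(F^{-p+1}_{R'})^{\perp_\sigma}$ are subbundles of $E_{R'}$ over $C \times \Spec R'$ with $R'$-flat quotient, and they agree over the generic point $\Spec K'$ because $f_K$, hence $f_K|_{K'}$, is an anti-invariant filtration. Since such a subbundle is recovered from its generic fibre as $F = \ker\bigl(E_{R'} \to (E_{R'}/F) \otimes_{R'} K'\bigr)$, it follows that $F^p_{R'} = (F^{-p+1}_{R'})^{\perp_\sigma}$ over all of $\Spec R'$. The $\sigma$-alternating case is identical.

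The hard part is really the bookkeeping of the first paragraph: one must pin down that $\text{Forget}_\psi$ matches the two numerical invariants on the nose, so that $\mu$, $M^\mu$, and HN filtrations all transport correctly, and one must isolate as a clean input the simplified HN-Specialization property for $\HBunsl$, which is the only genuinely new ingredient and is itself an instance of Theorem \ref{theorem:stratcriteria}. Once those are in hand, the equality case is only the rigidity of subbundles over a DVR, as above.
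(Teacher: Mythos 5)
Your reduction to $\HBunsl$ via $\text{Forget}_\psi$ is structurally sound, and the dictionary in your first paragraph ($\mu(f) = \mu(\text{Forget}_\psi \circ f)$, $M^\mu(x) = M^\mu(\text{Forget}_\psi(x))$, HN filtrations mapping to HN filtrations) is correct and consistent with Lemma \ref{lem:Hnfiltcriteria} and Proposition \ref{lem:antiinvstab}. The equality-case argument --- extend the Higgs filtration over $\Spec R'$ and then recover the $\perp_\sigma$-form by rigidity of flat subbundles over a DVR --- is also fine. But there is a genuine gap at the load-bearing step: the simplified HN-Specialization property for $\HBunsl$ itself, which you correctly identify as ``the only genuinely new ingredient,'' is never actually proved. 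Your justification is circular: Theorem \ref{theorem:stratcriteria} takes HN-Specialization as a \emph{hypothesis} and outputs a $\Theta$-stratification, so you cannot use the existence of the stratification (proved ``by the same Theorem'') to deduce HN-Specialization. The appeal to Nitsure's GIT construction is likewise only a gesture; extracting the stack-theoretic statement about $\mu(f_K) \leq M^\mu(\xi_{\Spec\kappa})$ from it would itself require the argument you are omitting.

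The missing content is exactly what the paper supplies, and it does so \emph{directly} on $\HBunantisym$ rather than routing through $\HBunsl$: by \cite[Theorem 2]{shatz1977decomposition}, any filtration of $\xi_K$ by subbundles extends (after saturating) to a filtration of $\xi$ by subsheaves over all of $\Spec R$, with the degrees of the special fibres only able to increase; since being a Higgs subbundle and being $\sigma$-isotropic are closed conditions, the extended filtration of $\xi_{\Spec\kappa}$ is again of the form in Proposition \ref{prop:filtonanti-invHiggs}, whence $\mu(f_K) \leq M^\mu(\xi_{\Spec\kappa})$, with the extension in the equality case coming for free. If you insist on your reduction, you must run precisely this Shatz argument for $\HBunsl$ to establish its HN-Specialization property --- at which point the detour through the forgetful morphism buys you nothing over applying Shatz directly to the anti-invariant filtration, as the paper does.
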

\begin{proof}
    Let $\xi : \Spec R \rightarrow \HBunantisym$ be a morphism from the spectrum of a DVR. In \cite[Theorem 2]{shatz1977decomposition}, it is shown that if $E$ is a family of vector bundles on $C$ over $\Spec R$, then any filtration over the generic point extends uniquely to the whole of $\Spec R$. Because the conditions of being a Higgs subbundle and being $\sigma$-isotropic are closed, it follows that if the filtration over the generic point was of the type in Proposition \ref{prop:filtonanti-invHiggs}, then the filtration over $\Spec R$ is as well. It follows that the HN filtration $f_K$ extends to a filtration of $\xi_{\Spec \kappa}$ and this implies that
    \begin{equation*}
        \mu(f_K) \leq M^\mu(\xi_{\Spec \kappa}).
    \end{equation*}
\end{proof}

Putting this all together, we obtain the following theorem.

\begin{theorem}\label{theorem:antiinvHiggsstrat}
    The stack $\HBunantisym$ admits a well-ordered $\Theta$-stratification such that the semistable locus is quasicompact. 
\end{theorem}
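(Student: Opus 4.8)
The plan is to apply Theorem \ref{theorem:stratcriteria} directly, using the work already assembled in this subsection. Both hypotheses of that theorem have been verified: HN-Boundedness is Proposition \ref{prop:HNbounded}, and the simplified HN-Specialization property is Proposition \ref{prop:HNspec}. By the remark following Theorem \ref{theorem:stratcriteria}, the numerical invariant $\mu$ defined via $\wt f^* \Ldet / \lVert \lambda_f \rVert$ is of the form covered by Definition 4.1.14 and Example 4.5.2 of \cite{halpern2014structure}, so the additional hypotheses on $\mu$ required there hold automatically. Hence $\mu$ defines a $\Theta$-stratification of $\HBunantisym$ with open substacks $\HBunantisym_{\leq c}$ as described above.

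Next I would identify the semistable locus of this stratification with the earlier notion of semistable anti-invariant Higgs bundle. The semistable locus is $\HBunantisym_{\leq 0} = \{x : M^\mu(x) = 0\}$, which by definition of $M^\mu$ is exactly the set of $x$ such that $\mu(f) \leq 0$ for every filtration $f$ with $f(1) \cong x$. By Proposition \ref{prop:filtonanti-invHiggs} such filtrations are precisely the filtrations by Higgs subbundles of the form $0 = F^0 \subset F^1 \subset \ldots \subset F^k \subseteq {F^k}^{\perp_\sigma} \subset \ldots \subset {F^0}^{\perp_\sigma} = E$, and the weight computation reducing Equation \ref{eq:weightBun} to $4\sum_{i=1}^k \mathrm{rk}(E^i)(\mathrm{rk}(E)-\mathrm{rk}(E^i))(\mu(E^i)-\mu(E/E^i))$ shows that nonpositivity of $\wt f^*\Ldet$ for all such $f$ is equivalent to nonpositivity for all $1$-step filtrations $0 \subset F \subset F^{\perp_\sigma} \subset E$, i.e. to $\deg(F) \leq 0$ for every $\sigma$-isotropic subbundle $F$ (using $\deg E = 0$). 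This is precisely Zelaci's Definition \ref{def}, so the semistable locus of the $\Theta$-stratification coincides with the open substack of semistable anti-invariant Higgs bundles. (One should note that since $\lVert\lambda_f\rVert > 0$ for nontrivial $f$, dividing by it does not affect the sign, so $M^\mu(x)=0$ iff $\wt f^*\Ldet \le 0$ for all $f$.)

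It remains to check that the stratification is well-ordered and that the semistable locus is quasicompact. Well-orderedness follows from the uniqueness and structure of HN filtrations established above: for each unstable $x$ there is a unique HN filtration, and the HN filtration of an anti-invariant Higgs bundle is simultaneously the Harder--Narasimhan filtration of the underlying Higgs bundle (as shown in the proof of the preceding proposition), whose strata are indexed by HN types with the standard well-order on HN polygons; restricting this well-order along the map sending an anti-invariant HN filtration to the underlying HN type shows the relevant index set is well-ordered. For quasicompactness of the semistable locus, by Proposition \ref{lem:antiinvstab} an anti-invariant Higgs bundle is semistable iff the underlying Higgs bundle is, so $\HBunantisymss$ is the preimage under $\text{Forget}_\psi$ of the semistable locus $\HBunslss$; since $\text{Forget}_\psi$ is affine (hence quasicompact) and $\HBunslss$ is quasicompact (it is a standard fact, e.g. following from boundedness of semistable Higgs bundles of fixed rank and degree, cf. \cite{nitsure1991moduli, Simpson2}), the preimage is quasicompact.

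The main obstacle is the bookkeeping in the second step: carefully matching the filtrations produced by maps $\Theta \to \HBunantisym$ (Proposition \ref{prop:filtonanti-invHiggs}) with Zelaci's semistability condition, in particular checking that the reduction from general admissible filtrations to $1$-step $\sigma$-isotropic filtrations goes through exactly as the weight formula suggests, and that no filtrations of a genuinely different shape can lower the weight. Everything else is assembling results already in hand.
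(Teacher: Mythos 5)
Your overall structure matches the paper's: both proofs invoke Theorem \ref{theorem:stratcriteria} via Propositions \ref{prop:HNbounded} and \ref{prop:HNspec}, and both treat well-orderedness briefly (the paper defers to \cite[Lemma 8.2]{alper2023existence}, you sketch the standard HN-polygon argument; either is acceptable). Your identification of $\HBunantisym_{\leq 0}$ with Zelaci's semistable locus is correct and is exactly the content of the weight computation in Section 4.2. Where you genuinely diverge is the quasicompactness of the semistable locus. You pull back $\HBunantisymss = \text{Forget}_\psi^{-1}(\HBunslss)$ and assert that $\text{Forget}_\psi$ is affine; but the paper only establishes affineness for the arrows forgetting the Higgs field ($\text{Forget}_\varphi$), not for those forgetting $\psi$, and affineness of $\text{Forget}_\psi$ is not obvious (its fibres are open loci inside Hom-schemes where $\psi$ is an isomorphism). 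What you actually need is only quasicompactness of $\text{Forget}_\psi$, which does hold since over a Noetherian base the Isom locus is an open substack of a finite-type affine Hom-scheme; if you weaken your claim to that, your argument goes through and is arguably shorter. The paper instead routes through the other side of the square: semistability of $(E,\varphi,\psi)$ bounds the Harder--Narasimhan type of the underlying anti-invariant bundle $(E,\psi)$ (the analogue of \cite[Corollary 3.3]{nitsure1991moduli}), the corresponding substack of $\Bunantisym$ is quasicompact by the parahoric uniformization results of \cite{heinloth2018hilbert}, and the preimage under the affine morphism $\text{Forget}_\varphi$ is then quasicompact and contains $\HBunantisymss$. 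The paper's route uses only the affineness it has already established; yours avoids the boundedness-of-HN-types input but needs the quasicompactness of $\text{Forget}_\psi$ to be justified rather than asserted as affineness.
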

\begin{proof}
    Because of Proposition \ref{prop:HNbounded} and Proposition \ref{prop:HNspec}, Theorem \ref{theorem:stratcriteria} now implies that the stack admits a $\Theta$-stratification. The well-ordering is analogous to the proof of well-ordering in \cite[Lemma 8.2
    ]{alper2023existence}. 
    Finally suppose $(E, \varphi, \psi)$ is a semistable anti-invariant bundle. By analogy with \cite[Corollary 3.3]{nitsure1991moduli} it follows that there are only finitely many Harder-Narasimhan types for the underlying anti-invariant bundle $(E, \psi)$. Denote $\mathcal{X}' \subset \Bunantisym$ for the substack parametrising those Harder-Narasimhan types. Because $\Bunantisym$ is isomorphic to a stack of parahoric torsors, it follows by the proof of  \cite[Proposition 3.18]{heinloth2018hilbert} that $\mathcal{X'}$ is quaiscompact. Thus the preimage under the affine morphism $\HBunantisym \rightarrow \Bunantisym$ is as well and this contains the semistable locus by construction.
\end{proof}

\section{Good Moduli Spaces} 

\subsection{Good Moduli Spaces}

Good moduli spaces were introduced in \cite{alper2013good} as an approximation of an algebraic stack by an algebraic space generalising the notion of good quotient. 

\begin{definition}
    A quasicompact and quasiseparated morphism $\pi : \mathfrak{X} \rightarrow X$ from an algebraic stack $\mathfrak{X}$ to an algebraic space $X$ is a \textbf{good moduli space} if 
    \begin{enumerate}
        \item $\mathcal{O}_X \rightarrow \pi_* \mathcal{O}_\mathfrak{X}$ is an isomorphism, and
        \item  $\pi_* \colon\QCoh(\mathfrak{X}) \rightarrow \QCoh(X)$ is exact.
    \end{enumerate}
\end{definition}
\begin{remark}
    \begin{enumerate}
        \item A morphism satisfying the second property in this definition is said to be cohomologically affine.
        \item In \cite[Theorem 4.16]{alper2013good} it is shown (along with other useful properties) that $\pi$ is surjective, universally closed, universal for maps to schemes and $\pi(x) = \pi(y)$ if and only if $\overline{\lbrace x \rbrace} \cap \overline{\lbrace y \rbrace} \neq \emptyset$.
    \end{enumerate}
\end{remark}

\subsection{Criteria}

In \cite{alper2023existence}, criteria are given for when an algebraic stack admits a good moduli space. 

\begin{theorem}\cite[Theorem A]{alper2023existence}
    Let $\mathfrak{X}$ be an algebraic stack of finite type with affine diagonal. Then $\mathfrak{X}$ admits a separated good moduli space if and only if it is $\Theta$-complete and S-complete.
\end{theorem}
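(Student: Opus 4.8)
This is \cite[Theorem A]{alper2023existence}, so rather than reconstructing the full argument the plan below records the shape of the proof given there. The ``only if'' direction is the soft one. If $\pi\colon\mathfrak{X}\to X$ is a good moduli space with $X$ separated, I would verify the two valuative criteria by hand. Given a morphism from a punctured test stack $[\Spec S/\Gm]\setminus\{O\}$, with $S=R[t]$ (weight $1$ on $t$) or $S=R[x,y]/(xy-\pi)$ (weights $\pm1$) and $O$ the codimension-two $\Gm$-fixed point, push it forward along $\pi$ to a morphism to $X$. The complement of $O$ is the punctured spectrum of a regular two-dimensional local ring and $X$ is a separated algebraic space, so this morphism extends over $O$; one then lifts the extension back to $\mathfrak{X}$ using that $\pi$ is universally closed and universal for maps to algebraic spaces, and uniqueness of the filling follows from the valuative criterion of separatedness applied to $\pi$ and then to $X$.

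The ``if'' direction is the real content. First I would invoke the étale-local structure theory of \cite{alper2013good} (and the associated local structure theorems for algebraic stacks) to reduce, at each point $x\in\mathfrak{X}$ whose stabilizer is linearly reductive, to a quotient chart $[\Spec A/G_x]\to\Spec A^{G_x}$. The hypothesis that closed points have linearly reductive stabilizers is not assumed but \emph{forced}: a closed point with non-reductive automorphism group produces a family over $[\Spec(R[x,y]/(xy-\pi))/\Gm]\setminus\{O\}$ with no filling, contradicting S-completeness. Next I would glue these local good moduli spaces: the disjoint union of the $\Spec A_i^{G_i}$ carries a natural étale equivalence relation, and the task is to show it is effective, producing an algebraic space $X$ with $\mathfrak{X}\to X$ still a good moduli space. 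Here $\Theta$-completeness supplies the canonical limits of filtrations compatibly across charts, while S-completeness is exactly the valuative criterion guaranteeing the resulting $X$ is separated; cohomological affineness of $\pi$ and the isomorphism $\mathcal{O}_X\to\pi_*\mathcal{O}_{\mathfrak{X}}$ are then inherited from the quotient charts.

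The main obstacle is this gluing step: assembling a single separated algebraic space from the étale-local good moduli spaces and checking that the global morphism remains a good moduli space. That is where the full weight of both completeness conditions, together with the careful analysis in \cite{alper2023existence} of maps out of $[\A^1/\Gm]$ and $[\Spec(R[x,y]/(xy-\pi))/\Gm]$, is unavoidable; by contrast the local charts, the reductivity of the stabilizers, and the ``only if'' direction are comparatively formal.
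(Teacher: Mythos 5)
This statement is an imported result: the paper quotes \cite[Theorem A]{alper2023existence} verbatim and supplies no proof of its own, so there is no internal argument to compare your proposal against. Your sketch is a fair high-level summary of the strategy actually used in that reference: the ``only if'' direction is formal, the ``if'' direction runs through the \'etale local structure theorem (quotient presentations $[\Spec A/G_x]$ near closed points), the observation that S-completeness forces closed points to have linearly reductive stabilizers, and a gluing argument in which $\Theta$-completeness and S-completeness are used to arrange that the local charts are strongly \'etale so that the local good moduli spaces descend to a single separated algebraic space.

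Two small cautions if you intend to flesh this out. First, in the ``only if'' direction the extension of the map $[\Spec S/\Gm]\setminus\{O\}\to X$ over $O$ is not obtained merely from separatedness of $X$: the composite to $X$ factors through the good moduli space of the punctured test stack, which is already all of $\Spec R$, and the lifting back to $\mathfrak{X}$ uses cohomological affineness together with a Hartogs-type argument on sections over the puncture, not just universal closedness. Second, the division of labour between the two hypotheses is slightly cleaner than you state: S-completeness is responsible both for linear reductivity of stabilizers at closed points and for separatedness of the resulting space, while $\Theta$-completeness is what guarantees that closures of points contain unique closed points compatibly across charts, which is what makes the \'etale equivalence relation effective. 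Since the paper only uses the theorem as a black box (verifying its hypotheses for $\HBunantisymss$ and $\HBunantialtss$), none of this affects the rest of the article.
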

\begin{remark}
    We know that $\HBunantisymss$ is locally of finite type with affine diagonal because it is an open substack of $\HBunantisym$, and Theorem \ref{theorem:antiinvHiggsstrat} shows that it is quasicompact. Therefore, it satisfies the hypotheses of this theorem. 
\end{remark}

The notions of $\Theta$- and S-completeness are both filling conditions with respect to codimension 2 points of algebraic stacks over discrete valuation rings. 

For $\Theta$-completeness, we consider the stack 
\begin{equation*}
    \Theta_R = \Theta \times \Spec R = [\Spec R[t] / \Gm ]
\end{equation*}
where $R$ is a DVR. This has a codimension 2 point $0$ defined by the vanishing of $t$ and a uniformizer of $R$. 

\begin{definition}
    Let $\mathfrak{X}$ be an algebraic stack. We say that $\mathfrak{X}$ is \textbf{$\Theta$-complete} \footnote{In \cite{alper2023existence} this is called $\Theta$-reductive.} if for all DVRs $R$ and every diagram 
\[\begin{tikzcd}
	{\Theta_R / 0} && {\mathfrak{X}} \\
	\\
	{\Theta_R}
	\arrow[hook, from=1-1, to=3-1]
	\arrow[from=1-1, to=1-3]
	\arrow["{\exists !}", dashed, from=3-1, to=1-3]
\end{tikzcd}\]
    there exists a unique map $\Theta_R \rightarrow \mathfrak{X}$ filling in the diagram.
\end{definition}

For the stacks under consideration here, whose elements are sheaves on $C$ with some decoration, there is a nice interpretation of this criterion. We have 
\begin{equation*}
    \Theta_R / 0 = \Spec R \bigcup_{\Spec K} \Theta_K,
\end{equation*}
where $K$ is the fraction field of $R$. Thus a morphism from $\Theta_R \backslash 0$ consists of a sheaf $\mathcal{E}$ with these decorations on $C_R$ and a morphism from $\Theta_K$ such that the image of 1 is the restriction of $\mathcal{E}$ to $C_K$, or equivalently a filtration of $\mathcal{E}_K$ satisfying some extra conditions determined by the decorations as in Section 4.1. Being $\Theta$-complete then requires that this filtration extends uniquely to $\mathcal{E}$.

For S-completeness, the stack under consideration is 
\begin{equation*}
    \Phi_R = [(\Spec R[x,y]_{1,-1} / (xy - \pi)) / \Gm ]
\end{equation*}
where $\pi$ is a uniformiser of $R$ and $\Gm$ acts with weights 1 and -1 on $x$ and $y$ respectively. This again has a codimension 2 point defined by the vanishing of $x$ and $y$. 

\begin{definition}
    Let $\mathfrak{X}$ be an algebraic stack. We say that $\mathfrak{X}$ is \textbf{S-complete} if for all DVRs $R$ and every diagram 
\[\begin{tikzcd}
	{\Phi_R / 0} && {\mathfrak{X}} \\
	\\
	{\Phi_R}
	\arrow[hook, from=1-1, to=3-1]
	\arrow[from=1-1, to=1-3]
	\arrow["{\exists !}", dashed, from=3-1, to=1-3]
\end{tikzcd}\]
    there exists a unique map $\Phi_R \rightarrow \mathfrak{X}$ filling in the diagram.
\end{definition}

We have that

\begin{equation*}
    \Phi_R / 0 = \Spec R \bigcup_{\Spec K} \Spec R,
\end{equation*}
so this can be interpreted as a weakened separatedness condition.

In \cite[Proposition 6.8.29]{alpernotes} it is shown that the stack of coherent sheaves $\Coh$ is both $\Theta$- and S-complete. Furthermore, \cite[Propositions 3.21 and 3.44]{alper2023existence} show that if $\mathfrak{X} \rightarrow \mathfrak{Y}$ is an affine morphism of algebraic stacks and $\mathfrak{Y}$ is $\Theta$- and/or S-complete, then so is $\mathfrak{X}$. Together with Proposition \ref{prop:Hcohaff}, this implies the following result.

\begin{proposition}\label{prop:HCoHcrit}
    The stack $\HCoh$ is $\Theta$- and S-complete.
\end{proposition}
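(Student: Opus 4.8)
The plan is to deduce $\Theta$- and S-completeness of $\HCoh$ directly from the two inputs already assembled in the excerpt: Proposition \ref{prop:Hcohaff}, which says the forgetful morphism $\HCoh \rightarrow \Coh$ is affine, and the cited facts that $\Coh$ is $\Theta$- and S-complete (\cite[Proposition 6.8.29]{alpernotes}) and that affine morphisms are stable under these completeness properties (\cite[Propositions 3.21 and 3.44]{alper2023existence}). So the proof is essentially a one-line invocation, but I would spell out the logical chain so the reader sees exactly which result feeds into which.

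\begin{proof}
    By Proposition \ref{prop:Hcohaff}, the forgetful morphism $\HCoh \rightarrow \Coh$ is affine. By \cite[Proposition 6.8.29]{alpernotes}, the stack $\Coh$ is both $\Theta$-complete and S-complete. Now \cite[Proposition 3.21]{alper2023existence} shows that if $\mathfrak{X} \rightarrow \mathfrak{Y}$ is an affine morphism of algebraic stacks and $\mathfrak{Y}$ is $\Theta$-complete, then $\mathfrak{X}$ is $\Theta$-complete, and \cite[Proposition 3.44]{alper2023existence} gives the analogous statement for S-completeness. Applying these with $\mathfrak{X} = \HCoh$ and $\mathfrak{Y} = \Coh$ yields that $\HCoh$ is $\Theta$- and S-complete.
\end{proof}

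The only conceivable obstacle is a bookkeeping one: one must make sure the cited propositions of \cite{alper2023existence} are stated for \emph{arbitrary} affine morphisms (not merely, say, closed immersions or representable morphisms with some finiteness hypothesis), and that no Noetherian or finite-type assumption is silently needed on the source. Since $\HCoh$ is algebraic and locally of finite type and the completeness conditions are tested against the fixed stacks $\Theta_R$ and $\Phi_R$ for DVRs $R$, the pullback-along-affine-morphism argument goes through without any quasicompactness hypothesis, so this is genuinely immediate. I would therefore not expect to need more than the sentence above; the substance of this section is simply recording the consequence so it can be combined later (for $\HBunsl$ and the anti-invariant stacks) with the quasicompactness supplied by the $\Theta$-stratification.
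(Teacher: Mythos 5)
Your proof is correct and is exactly the paper's argument: the paper derives Proposition \ref{prop:HCoHcrit} in the paragraph immediately preceding it by combining the affineness of $\HCoh \rightarrow \Coh$ (Proposition \ref{prop:Hcohaff}), the $\Theta$- and S-completeness of $\Coh$ from \cite[Proposition 6.8.29]{alpernotes}, and the stability of these properties under affine morphisms from \cite[Propositions 3.21 and 3.44]{alper2023existence}. No gaps; your added remark about checking the hypotheses of the cited propositions is reasonable diligence but does not change the argument.
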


This stack will not admit a good moduli space, because it is not quasicompact. 

\section{Moduli Space of Semistable Higgs Bundles}

In this section we show that the stack of semistable Higgs bundles is $\Theta$- and S-complete and thus admits a good moduli space. Because there is already a GIT construction of this moduli space due to Nitsure in \cite{nitsure1991moduli} and Simpson in \cite{Simpson1} \cite{Simpson2}, it is expected that this is true, but we could not find a proof in the literature and this is a necessary component of the proof for the anti-invariant case. 

The idea is that in both the criteria, we already know that there exists a unique extension to the bigger stack $\HCoh$. The only thing left to verify then is that the image of $0$ is in the substack of semistable Higgs bundles. The arguments are analogous to those for the stack of semistable vector bundles on a curve as presented in \cite{alper2022projectivity}.

\begin{proposition}
    $\HBunrdss$ is $\Theta$-complete
\end{proposition}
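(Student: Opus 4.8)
The plan is to reduce the $\Theta$-completeness of $\HBunrdss$ to that of $\HCoh$ (Proposition \ref{prop:HCoHcrit}) plus a check that semistability is preserved at the central point. So let $R$ be a DVR and suppose we are given a morphism $\Theta_R / 0 \rightarrow \HBunrdss$. Composing with the open immersion $\HBunrdss \hookrightarrow \HCoh$, and using that $\HCoh$ is $\Theta$-complete, we obtain a unique extension $g \colon \Theta_R \rightarrow \HCoh$; moreover the uniqueness in $\HCoh$ forces uniqueness of any lift to $\HBunrdss$, since the latter is an open substack. It therefore remains to show that $g$ factors through $\HBunrdss$, and since $\HBunrdss$ is open in $\HCoh$ and $g$ already lands there over the open complement $\Theta_R / 0$, it is enough to show that the image of the single closed point $0 \in \Theta_R$ lies in $\HBunrdss$.

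The point $0 \in \Theta_R$ maps to some Higgs sheaf $(\mathcal{E}_0, \varphi_0)$ on $C$; concretely, the data of $g$ over $\Theta_R$ restricts over $\Theta_\kappa$ (with $\kappa$ the residue field) to a filtration by Higgs subsheaves of the generic-fiber-degenerated object, whose associated graded is $(\mathcal{E}_0, \varphi_0)$. First I would argue that $\mathcal{E}_0$ is torsion-free, hence a Higgs bundle: as noted in the excerpt (the discussion of $\HCoh$ right after Proposition \ref{lem:antiinvstab}), the maximal torsion subsheaf is always a Higgs subsheaf of slope $+\infty$, so if $\mathcal{E}_0$ had torsion it would already be destabilized as a Higgs sheaf, contradicting the valuative setup once we know the central fiber is a limit of semistable objects. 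Here the key input is a \emph{completeness/closedness of the semistable locus along specializations} inside $\HCoh$: a semistable Higgs bundle cannot specialize, inside $\HCoh$, to an unstable Higgs sheaf, because any destabilizing Higgs subsheaf of the central fiber would spread out (by the valuative criterion for the relative Quot/Hilbert-type functor, or by the same Shatz-type extension argument used in Proposition \ref{prop:HNspec}) to a destabilizing Higgs subsheaf over the punctured disk, contradicting generic semistability. Then I would use openness of semistability (the semistable locus $\HBunrdss$ is open in $\HBunrd$, hence in $\HCoh$) together with this closedness-under-specialization to conclude $(\mathcal{E}_0,\varphi_0) \in \HBunrdss$.

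The slight subtlety with the $\Theta_R$ geometry, as opposed to a plain DVR, is that $0$ is a codimension-two point and one must test destabilizing subobjects $\Gm$-equivariantly; but this is handled exactly as in \cite{alper2022projectivity} for vector bundles: a $\Gm$-equivariant destabilizing Higgs subsheaf of the central fiber corresponds to a sub-filtration, which again extends over $\Theta_R / 0$ by $\Theta$-completeness of $\HCoh$ and contradicts the assumption that the map over $\Theta_R/0$ lands in the semistable locus. Concretely: if $\mathcal{F}_0 \subset \mathcal{E}_0$ were a destabilizing $\Gm$-fixed Higgs subsheaf, its Rees-type extension gives a map $\Theta_R / 0 \rightarrow \operatorname{Filt}(\HCoh)$ whose restriction to the generic point violates the weight condition defining semistability for the original $\Theta_R/0 \rightarrow \HBunrdss$.

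The main obstacle I expect is the torsion-free/boundedness bookkeeping in this last step — making precise that a $\Gm$-equivariant destabilizing subobject of the central fiber of $g$ really does propagate to $\Theta_R / 0$, and that its generic-fiber shadow contradicts semistability there rather than merely semistability of the unfiltered generic fiber. Everything else (existence and uniqueness of the extension to $\HCoh$, reduction to checking one closed point, openness of $\HBunrdss$) is formal. I would lean on the Shatz-type unique-extension-of-filtrations result already invoked in Proposition \ref{prop:HNspec}, applied now not to HN filtrations but to hypothetical destabilizing ones, to carry this through cleanly.
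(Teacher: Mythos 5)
Your reduction is the same as the paper's: compose with the open immersion into $\HCoh$, use Proposition \ref{prop:HCoHcrit} to get the unique extension $g \colon \Theta_R \rightarrow \HCoh$, and reduce to checking that $g(0)$ lands in $\HBunrdss$. That part is fine and is exactly how the paper begins. The gap is in how you check $g(0)$ is semistable. Your key claim --- ``a semistable Higgs bundle cannot specialize, inside $\HCoh$, to an unstable Higgs sheaf, because any destabilizing Higgs subsheaf of the central fiber would spread out \ldots to a destabilizing Higgs subsheaf over the punctured disk'' --- is false as a general principle, and the cited tools do not support it. Shatz-type extension (as used in Proposition \ref{prop:HNspec}) and the valuative criterion for Quot both go in the \emph{opposite} direction: they extend subsheaves and filtrations from the generic point to the special point. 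A destabilizing subsheaf of a special fiber does not propagate back to the generic fiber; semistability is an \emph{open} condition in families, not a closed one, and the failure of your claimed closedness is precisely the content of Langton-type theorems. The same problem recurs in the $\Theta_\kappa$ direction: the associated graded of an arbitrary Higgs filtration of a semistable $E_\kappa$ can perfectly well be unstable (take any subbundle of strictly smaller slope), so a $\Gm$-fixed destabilizing subsheaf of $g(0) = \mathrm{gr}(E_\kappa)$ cannot be ``Rees-extended'' to a filtration over $\Theta_R/0$ contradicting semistability there --- it lives only over the closed point $0$, which is not in $\Theta_R/0$ at all.

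What actually makes the statement true is the extra numerical information carried by the generic fiber of the filtration, and the paper exploits it directly rather than by contradiction. Since $\mathrm{gr}(E_K)$ is semistable of slope $\mu$, every graded piece $F_i/F_{i-1}$ has slope $\mu$; the convexity of slopes in extensions then forces every step $F_i$ of the generic filtration to have slope exactly $\mu$, and constancy of slope in flat families transfers this to the extended steps $E_{i,\kappa}$. These are then same-slope Higgs subsheaves of the semistable $E_\kappa$, hence semistable bundles, and Lemma \ref{lem:quotss} (quotients of same-slope semistable subobjects are semistable) gives semistability of each graded piece and hence of $g(0)$. This slope bookkeeping is the real content of the proof and is entirely absent from your argument; without it the ``closedness under specialization'' you invoke is simply not available.
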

\begin{proof}
    Given a morphism $f: \Theta_R / 0 \rightarrow \HBunrdss$, we need to show that it extends uniquely to a morphism from $\Theta_R$. When we compose with the inclusion $\HBunrdss \rightarrow \HCoh$, then a unique extension $g: \Theta_R \rightarrow \HCoh$ exists because by Proposition \ref{prop:HCoHcrit}, $\HCoh$ is $\Theta$-complete. 

\[\begin{tikzcd}
	{\Theta_R / 0} && \HBunrdss \\
	\\
	{\Theta_R} && \HCoh
	\arrow["f"', from=1-1, to=1-3]
	\arrow[hook, from=1-1, to=3-1]
	\arrow[hook, from=1-3, to=3-3]
	\arrow["g"', from=3-1, to=3-3]
\end{tikzcd}\]
To get a unique lift in the above diagram, it then suffices to show that if $g: \Theta_R \rightarrow HCoh$ is a morphism such that the restriction to $\Theta_R / 0$ lands in $\HBunrdss$, then $g(0) \in \HBunrdss$. 

A morphism $\Theta_R / 0$ corresponds to a semistable Higgs bundle $E$ on $C_R$ of rank $r$ and degree $d$ and a Higgs filtration 
\begin{equation}
    \lbrace 0 \rbrace = F_0 \subset F_1 \subset \ldots \subset F_{n-1} \subset F_n = E_K
\end{equation}
of the restriction to the generic point of the DVR such that $gr(E_K)$ is semistable of rank $r$ and degree $d$.

The fact that this morphism extends in $\HCoh$ corresponds to the fact that the filtration of $E_K$ extends globally as a filtration by Higgs subsheaves
\begin{equation}
    \lbrace 0 \rbrace = E_0 \subset E_1 \subset \ldots \subset E_{n-1} \subset E_n = E.
\end{equation}

We first show that each $E_i$ for $i \in \lbrace 1, \ldots, n \rbrace$ is a semistable Higgs bundle of slope $ \mu := \frac{r}{d}$. Because we know that $gr(E_K)$ is semistable of slope $\mu$, it follows that for each $i$, $F_i / F_{i-1}$ is semistable of slope $\mu$. Because $F_n/F_{n-1}$ and $F_n$ both have slope $\mu$ and in an extension $0 \rightarrow V_1 \rightarrow V_2 \rightarrow V_3 \rightarrow 0$, the slope of $V_2$ is a (non-trivial) convex combination of those of $V_1$ and $V_3$, it must hold that $F_{n-1}$ has slope $\mu$. By induction we can similarly show that this holds for the other non-trivial subbundles in the filtration of $E_K$. Because the slope is constant in flat families, it follows that the slope of the non-trivial $E_i$ is also $\mu$.

Because all of the $E_i$ are now subsheaves of $E$ of the same slope, they must also be semistable Higgs bundles. Over the residue field $\kappa$ of the DVR, we now have a filtration
\begin{equation}
    \lbrace 0 \rbrace = E_{0,\kappa} \subset E_{1,\kappa} \subset \ldots \subset E_{n-1, \kappa} \subset E_{n, \kappa} = E
\end{equation}
where each nontrivial $E_{i,\kappa}$ is a semistable Higgs bundle of slope $\mu$. To conclude that $E_{i,\kappa}/ E_{i-1, \kappa}$ is semistable of slope $\mu$, we can apply Lemma \ref{lem:quotss}.

This shows that $gr(E_\kappa)$ is semistable of slope $\mu$, which is precisely the statement that $g(0) \in \HBunrdss$.
\end{proof}
\begin{lemma}\label{lem:quotss}
    Suppose $E$ is a semistable Higgs bundle of slope $\mu$ and $F$ is a non-trivial semistable Higgs subbundle of the same slope. Then $E/F$ is also semistable of slope $\mu$.
\end{lemma}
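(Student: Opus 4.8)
The plan is to run the standard argument for semistable objects in an abelian category with a slope function, here the category of Higgs sheaves on $C$ (with morphisms the Higgs morphisms, i.e. those compatible with $\varphi$). First I would record the elementary slope lemma: for a short exact sequence $0 \to V_1 \to V_2 \to V_3 \to 0$ of nonzero Higgs sheaves one has
\begin{equation*}
    \mu(V_2) = \frac{\text{rk}(V_1)\,\mu(V_1) + \text{rk}(V_3)\,\mu(V_3)}{\text{rk}(V_1) + \text{rk}(V_3)},
\end{equation*}
so $\mu(V_2)$ is a strictly convex combination of $\mu(V_1)$ and $\mu(V_3)$; in particular if $\mu(V_1) = \mu$ and $\mu(V_3) > \mu$ then $\mu(V_2) > \mu$, and if $\mu(V_1) = \mu(V_3) = \mu$ then $\mu(V_2) = \mu$. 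Applying this to $0 \to F \to E \to E/F \to 0$ (where we may assume $F$ is a proper nonzero subbundle, the case $F = E$ being vacuous) and using $\mu(F) = \mu(E) = \mu$ gives $\mu(E/F) = \mu$, settling the slope claim.

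For semistability, I would argue by contradiction: suppose $E/F$ admits a Higgs subbundle $G$ with $\mu(G) > \mu$ (it suffices to test saturated Higgs subsheaves, since the saturation of any Higgs subsheaf is again a Higgs subsheaf of degree at least as large). Let $\pi \colon E \to E/F$ be the projection and set $G' := \pi^{-1}(G) \subset E$. Two routine checks are needed: first, $G'$ is a Higgs subsheaf — since $\pi$ intertwines $\varphi$ with the induced $\bar\varphi$ on $E/F$ and $\bar\varphi(G) \subset G \otimes L$, for local sections $s$ of $G'$ the element $\varphi(s)$ lies in $G' \otimes L$ because its image in $(E/F)\otimes L$ lies in $G \otimes L$ and $F \otimes L \subset G' \otimes L$; second, $G'$ is saturated in $E$, because $E/G' \cong (E/F)/G$ is torsion-free. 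Now $0 \to F \to G' \to G \to 0$ is a short exact sequence of nonzero Higgs sheaves, so the slope lemma gives $\mu(G') > \mu = \mu(E)$, contradicting semistability of $E$. Hence every saturated Higgs subsheaf of $E/F$ has slope $\le \mu$, and therefore $E/F$ is semistable of slope $\mu$.

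This proof is entirely formal and I do not expect a genuine obstacle; the only points deserving an explicit line are the convexity of slope in an exact sequence (a one-line computation from additivity of rank and degree) and the assertion that the preimage of a saturated Higgs subsheaf under a Higgs morphism is again a saturated Higgs subsheaf, which follows from the quotient being torsion-free together with $\pi$ respecting the Higgs fields. I would state these as the two small lemmas above and then the conclusion is immediate.
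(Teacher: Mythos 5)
Your proof is correct and follows essentially the same route as the paper: the slope of $E/F$ is pinned down by convexity of slope in the extension $0 \to F \to E \to E/F \to 0$, and semistability is tested by passing between Higgs subbundles of $E/F$ and Higgs subbundles of $E$ containing $F$, using convexity again together with semistability of $E$. The only difference is cosmetic (you argue by contradiction via the preimage $\pi^{-1}(G)$ and spell out that it is a saturated Higgs subsheaf, whereas the paper runs the correspondence in the other direction and leaves those verifications implicit).
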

\begin{proof}
    Because $E$ is an extension of $F$ and $E/F$, it follows that the slope of $E$ is a (non-trivial) convex combination of the slopes of $F$ and $E/F$. This forces the slope of $E/F$ to be $\mu$. 
    Now suppose $H$ is another Higgs subbundle containing $F$, so that $H/F$ is a Higgs subbundle of $E/F$. As $E$ is semistable, the slope of $H$ is less than or equal to $\mu$. Because it is again a non-trivial convex combination of the slopes of $F$ and $H/F$, this implies that the slope of $H/F$ has to be less than or equal to $\mu$. This proves that $E/F$ is a semistable Higgs sheaf and thus a semistable Higgs bundle.
\end{proof}

We will prove that this stack also satisfies S-completeness by analysing opposite filtrations of semistable Higgs bundles. 

\begin{definition}
    Two $\Z$-graded filtrations 
    \begin{equation*}
        E_\bullet \colon \lbrace 0 \rbrace \subset \ldots E_{i-1} \subset E_{i} \subset E_{i + 1} \subset \ldots E
    \end{equation*}
    \begin{equation*}
        F_\bullet \colon F \supset \ldots F_{i-1} \supset F_{i} \supset F_{i + 1} \supset \ldots \lbrace 0 \rbrace
    \end{equation*}
    are \textbf{opposite} if 
    \begin{equation}
        E_i / E_{i-1} \cong F_i / F_{i + 1}
    \end{equation}
    for all $i \in \Z$.
\end{definition}

In \cite[Proposition 6.8.32]{alpernotes} it is shown that if $\mathcal{U} \subset \Coh$ is an open substack and for all opposite filtrations $E_\bullet, F_\bullet$ of $E,F \in \mathcal{U}(k)$ we have that $gr(E_\bullet) \in \mathcal{U}(k)$, then $\mathcal{U}$ is S-complete. The argument is that any morphism $\phi_R \backslash 0 \rightarrow \mathcal{U}$ extends to a morphism $ g \colon \phi_R \rightarrow \Coh$. Restricting this to the locus where $\pi = 0$, for $\pi$ some uniformiser of $R$, we get a morphism $ g' \colon [\Spec k[x,y]/(xy) / \Gm] \rightarrow \Coh$. To check that $g(0) \in \mathcal{U}$, it suffices to check that the $g'(0) \in \mathcal{U}$. The morphism $g'$ corresponds precisely to two opposite filtrations and $g'(0)$ is the associated graded of these filtrations.

If we want a similar criterion for open substacks of $\HCoh$, the only thing that changes is that we should consider opposite Higgs filtrations. This proves the following lemma.

\begin{lemma}\label{lem:Scomp=oppfilt}
    Suppose $\mathcal{U} \subset \HCoh$ is an open substack and for all opposite Higgs filtrations $E_\bullet, F_\bullet$ of $E,F \in \mathcal{U}(k)$ we have that $gr(E_\bullet) \in \mathcal{U}(k)$, then $\mathcal{U}$ is S-complete.
\end{lemma}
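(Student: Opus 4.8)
The plan is to follow verbatim the argument for $\Coh$ recalled just above, substituting $\HCoh$ for $\Coh$ throughout and using Proposition~\ref{prop:HCoHcrit} in place of the S-completeness of $\Coh$.

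Concretely, I would start with an arbitrary morphism $\Phi_R \setminus 0 \to \mathcal{U}$ together with the inclusion $\Phi_R \setminus 0 \hookrightarrow \Phi_R$, compose with the open immersion $\mathcal{U} \hookrightarrow \HCoh$, and invoke Proposition~\ref{prop:HCoHcrit} to obtain a unique extension $g \colon \Phi_R \to \HCoh$. Since $\mathcal{U} \hookrightarrow \HCoh$ is a monomorphism, any lift of $g$ through $\mathcal{U}$ is automatically unique, and such a lift exists precisely when $g(0) \in \mathcal{U}$: indeed $\mathcal{U}$ is open, so $g^{-1}(\mathcal{U})$ is then an open substack of $\Phi_R$ containing $\Phi_R \setminus 0$, and hence all of $\Phi_R$. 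As the codimension-two point $0$ lies on the closed substack $\{\pi = 0\} \subset \Phi_R$, which is isomorphic to $[\Spec k[x,y]/(xy)/\Gm]$, it suffices to show that the restriction $g' \colon [\Spec k[x,y]/(xy)/\Gm] \to \HCoh$ sends $0$ into $\mathcal{U}$.

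The next step is to identify $g'$ with a pair of opposite Higgs filtrations. The stack $[\Spec k[x,y]/(xy)/\Gm]$ is the pushout of two copies of $\Theta$ along $B\Gm$, the two copies carrying opposite $\Gm$-weights, so by the dictionary of Section~4 a morphism $\Theta \to \HCoh$ is exactly a filtration of a Higgs sheaf on $C$ by Higgs subsheaves; thus $g'$ amounts to Higgs filtrations $E_\bullet$ and $F_\bullet$ with matching graded pieces $E_i/E_{i-1} \cong F_i/F_{i+1}$, that is, two opposite Higgs filtrations in the sense of the preceding definition, with $g'(0)$ their common associated graded $gr(E_\bullet)$. The hypothesis on $\mathcal{U}$ gives $gr(E_\bullet) \in \mathcal{U}(k)$, hence $g'(0) \in \mathcal{U}$, hence $g(0) \in \mathcal{U}$, and $g$ factors (uniquely) through $\mathcal{U}$, as required.

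The only genuine content — and so the point to check carefully rather than a real obstacle — is the identification of morphisms out of the nodal quotient stack $[\Spec k[x,y]/(xy)/\Gm]$ with opposite Higgs filtrations, together with the identification of the image of $0$ with the associated graded. This is the Higgs-sheaf analogue of the computation underlying the $\Coh$ statement: it follows from the Rees-module description of $\Gm$-equivariant sheaves used in Section~4, the $\Gm$-equivariance of the Higgs field $\varphi$ forcing every subsheaf appearing in the two filtrations to be a Higgs subsheaf.
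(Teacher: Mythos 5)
Your proof is correct and follows essentially the same route as the paper: extend the map to $\Phi_R \to \HCoh$ using Proposition~\ref{prop:HCoHcrit}, reduce the lifting problem to checking $g(0) \in \mathcal{U}$ via openness, restrict to the nodal locus $\{\pi = 0\}$, and identify the resulting morphism out of $[\Spec k[x,y]/(xy)/\Gm]$ with a pair of opposite Higgs filtrations whose common associated graded is $g'(0)$. The paper's own justification is exactly this adaptation of the $\Coh$ argument from \cite[Proposition 6.8.32]{alpernotes}, with ordinary filtrations replaced by Higgs filtrations; your write-up simply spells out a few of the intermediate steps more explicitly.
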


Opposite filtrations of a semistable Higgs bundle imply a lot of structure for the filtrations. This is captured in the following lemma.

\begin{lemma}\label{lem:oppfiltslope}
    Let $E_\bullet$, $F_\bullet$ be opposite Higgs filtrations of semistable Higgs bundles $E,F$ of rank $r$ and degree $d$. Then all of the non-trivial $E_i (\text{and } E_i/E_{i-1}),F_i$ are semistable Higgs subbundles of $E,F$ of slope $\mu := r/d$.
\end{lemma}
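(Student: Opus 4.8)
The plan is to mimic the structure of the $\Theta$-completeness argument above, with Lemma \ref{lem:quotss} doing the final work; the new ingredient is that having \emph{two} opposite filtrations — an increasing one of the semistable bundle $E$ and a decreasing one of the semistable bundle $F$ with isomorphic graded pieces — rigidifies all the slopes.

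First I would set $G_i := E_i/E_{i-1} \cong F_i/F_{i+1}$ and write $r_i = \text{rk}(G_i)$, $d_i = \text{deg}(G_i)$, so that $\sum_i r_i = r$ and $\sum_i d_i = d$. Since $E_\bullet$ is an increasing Higgs filtration of $E$, each $E_j$ is a Higgs subsheaf of $E$ with $\text{rk}(E_j) = \sum_{i \le j} r_i$ and $\text{deg}(E_j) = \sum_{i \le j} d_i$, so semistability of $E$ gives $\sum_{i \le j}(d_i - \mu r_i) \le 0$ for all $j$. Dually, each $F_j$ is a Higgs subsheaf of $F$ with $\text{rk}(F_j) = \sum_{i \ge j} r_i$ and $\text{deg}(F_j) = \sum_{i \ge j} d_i$, and semistability of $F$ gives $\sum_{i \ge j}(d_i - \mu r_i) \le 0$ for all $j$. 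Setting $a_i := d_i - \mu r_i$, this says that every prefix sum and every suffix sum of the sequence $(a_i)$ is $\le 0$, while the total sum is $\text{deg}(E) - \mu\,\text{rk}(E) = 0$; a short bookkeeping argument then forces every prefix sum to vanish, hence $a_i = 0$ for all $i$. Thus every $G_i$, and therefore every $E_j$ and every $F_j$, has slope exactly $\mu$.

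Next I would promote ``slope $\mu$'' to ``semistable of slope $\mu$'': any Higgs subsheaf of $E_j$ is a Higgs subsheaf of $E$, hence of slope $\le \mu = \mu(E_j)$ by semistability of $E$, so $E_j$ is semistable, and likewise for $F_j$ inside $F$. The same comparison with $E$ also shows $E_j$ is saturated, i.e. an honest subbundle: its saturation would be a Higgs subsheaf of $E$ of the same rank and strictly larger degree, contradicting semistability of $E$. For the graded pieces, $E_{i-1}$ is either zero or a semistable Higgs subbundle of the semistable Higgs bundle $E_i$ of the same slope $\mu$, so Lemma \ref{lem:quotss} applies and $G_i = E_i/E_{i-1}$ is a semistable Higgs bundle of slope $\mu$; the same works for the $F$-side, and this also records that all graded pieces are semistable of slope $\mu$.

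The main obstacle — though it is short once set up correctly — is the slope-rigidity step: neither the increasing filtration of $E$ nor the decreasing filtration of $F$ alone pins down the slopes of the $G_i$, and it is precisely the combination of the two, i.e. the oppositeness hypothesis, that does. Everything afterward is a routine reduction to Lemma \ref{lem:quotss}, exactly as in the proof of $\Theta$-completeness.
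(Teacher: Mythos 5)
Your proof is correct, and it rests on the same mechanism as the paper's: each $E_j$ is constrained from above by being a Higgs subobject of the semistable $E$, and from below because oppositeness makes its numerical invariants those of the quotient $F/F_{j+1}$ of the semistable $F$. The difference is organizational. The paper extracts only the first step of this ($E_1\cong F/F_2$ forces $\mu(E_1)=\mu(F_2)=\mu$) and then runs an induction on the truncated filtrations of $E/E_1$ and $F_2$, which requires knowing at each stage that these truncations are again opposite filtrations of semistable bundles (semistability of $E/E_1$ coming from Lemma \ref{lem:quotss}). Your version replaces the induction by the one-shot observation that, with $a_i=d_i-\mu r_i$, every prefix sum and every suffix sum is $\leq 0$ while complementary prefix and suffix sums add to zero, forcing all $a_i=0$; this pins down every slope simultaneously and defers Lemma \ref{lem:quotss} to a single final application for the graded pieces. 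That is arguably cleaner, since the inductive hypothesis never needs to be re-verified, and your added remark that the $E_j$ are automatically saturated is a correct (if not strictly necessary) bonus. I see no gap.
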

\begin{proof}
    Suppose that $E_i = \lbrace 0 \rbrace$ for $i \leq 0$ (and thus $F_i = F$ for $i \leq 1$) and $E_i = E$ for $i \geq n$ (and thus $F_i = \lbrace 0 \rbrace$ for $ i \geq n + 1$). We show that $E_1$ and $F_2$ are semistable of slope $\mu$. Because $E_1$ is a Higgs subbundle of $E$, we have that
    \begin{equation*}
        \mu(E_1) \leq \mu.
    \end{equation*}
    But because the filtrations are opposite, we also know that 
    \begin{equation*}
        E_1 \cong F / F_2,
    \end{equation*}
    so because it is isomorphic to a Higgs quotient of $F$, we have that 
    \begin{equation*}
        \mu(E_1) \geq \mu.
    \end{equation*}
    We conclude that $\mu(E_1) = \mu.$ As $E_1 \cong F / F_2$, this also implies that $\mu(F_2) = \mu$. Because they are both Higgs subbundles of semistable Higgs bundles of slope $\mu$, it follows that they are semistable.

    The two filtrations 
    \begin{equation*}
        \lbrace 0 \rbrace \subset E_2 / E_1 \subset \ldots \subset E_{n-1}/E_1 \subset E/E_1,
    \end{equation*}
    \begin{equation}
        F_2 \supset F_3 \supset \ldots \supset F_n \supset \lbrace 0 \rbrace
    \end{equation}
    are now opposite filtrations of the semistable Higgs bundles $E/E_1$ and $F_2$. We can argue by induction that $F_i$ is a semistable Higgs bundle of slope $\mu$ for each $i$ and similarly that $E_i/E_{i -1}$ is. This will in turn imply that $E_i$ is a semistable Higgs bundle for each $i$.
\end{proof}

Combining the previous lemmas, we can show that the stack of Higgs bundles is S-complete.

\begin{proposition}
    $\HBunrdss$ is S-complete.
\end{proposition}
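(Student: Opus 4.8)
The plan is to deduce this from Lemma \ref{lem:Scomp=oppfilt} applied to the open substack $\mathcal{U} = \HBunrdss \subset \HCoh$. By that lemma it is enough to check the following: for any two opposite Higgs filtrations $E_\bullet$, $F_\bullet$ of semistable Higgs bundles $E, F$ of rank $r$ and degree $d$, the associated graded Higgs sheaf $gr(E_\bullet) = \bigoplus_i E_i/E_{i-1}$ again lies in $\HBunrdss(k)$, i.e.\ is a semistable Higgs bundle of rank $r$ and degree $d$.

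First I would invoke Lemma \ref{lem:oppfiltslope}, which tells us that each subquotient $E_i/E_{i-1}$ is a semistable Higgs bundle of the common slope $\mu$; in particular each is torsion-free, so $gr(E_\bullet)$ is a genuine Higgs bundle and not merely a Higgs sheaf. Since rank and degree are additive along the short exact sequences $0 \to E_{i-1} \to E_i \to E_i/E_{i-1} \to 0$, summing over $i$ gives $\text{rk}(gr(E_\bullet)) = \text{rk}(E) = r$ and $\deg(gr(E_\bullet)) = \deg(E) = d$. It then remains to see that $gr(E_\bullet)$, being a direct sum of semistable Higgs bundles all of slope $\mu$, is itself semistable of slope $\mu$: if $G \subset \bigoplus_i E_i/E_{i-1}$ is a Higgs subbundle, then each projection $G \to E_i/E_{i-1}$ has image a Higgs subsheaf, of slope $\leq \mu$ by semistability of the factor, and $G$ is an iterated extension of these images, so $\mu(G) \leq \mu$. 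Hence $gr(E_\bullet) \in \HBunrdss(k)$, and Lemma \ref{lem:Scomp=oppfilt} yields S-completeness of $\HBunrdss$.

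I do not expect a real obstacle here: all the substantive work has been packaged into Lemmas \ref{lem:Scomp=oppfilt} and \ref{lem:oppfiltslope}, and the only additional input is the standard fact that a finite direct sum of equal-slope semistable objects is semistable, whose proof is the short argument indicated above. The one point to stay alert to is bookkeeping: one must make sure that the notion of ``opposite filtration'' fed into Lemma \ref{lem:Scomp=oppfilt} is the Higgs version — filtrations by Higgs subsheaves — so that Lemma \ref{lem:oppfiltslope} is directly applicable, and that the torsion-freeness of the subquotients is precisely what upgrades $gr(E_\bullet)$ from an arbitrary object of $\HCoh(k)$ to an object of $\HBunrdss(k)$.
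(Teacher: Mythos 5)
Your proposal is correct and follows exactly the route the paper takes: reduce via Lemma \ref{lem:Scomp=oppfilt} to checking that the associated graded of opposite Higgs filtrations is semistable of the same rank and degree, then invoke Lemma \ref{lem:oppfiltslope} to see each summand is semistable of the common slope $\mu$. The only difference is that you spell out the (standard, and correctly argued) fact that a direct sum of equal-slope semistable Higgs bundles is semistable, which the paper leaves implicit.
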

\begin{proof}
    By Lemma \ref{lem:Scomp=oppfilt} it suffices to show that given two opposite filtrations of semistable Higgs bundles of rank $r$ and degree $d$, the associated graded is also semistable of rank $r$ and degree $d$. 
    
    By Lemma \ref{lem:oppfiltslope}, each of the summands of the associated graded is semistable of slope $\mu := r/d$. This implies that the whole associated graded is semistable of rank $r$ and degree $d$.
\end{proof}

Together with the $\Theta$-completeness, this establishes the following theorem.

\begin{theorem}\label{th:gmsH}
    The stack $\HBunrdss$ admits a good moduli space.
\end{theorem}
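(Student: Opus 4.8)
The plan is to invoke the existence criterion \cite[Theorem A]{alper2023existence}: since $\HBunrdss$ is an algebraic stack of finite type with affine diagonal (affine diagonal by Proposition \ref{prop:bunaffdiag}, and finite type because fixing rank and degree together with semistability cuts out a quasicompact stack, as is classical), it admits a separated good moduli space as soon as it is $\Theta$-complete and S-complete. Both of these properties have just been established in the two preceding propositions, so the proof is essentially an assembly of the pieces already in place.

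Concretely, I would first remark that $\HBunrdss$ is an open substack of $\HCoh$ (it is the preimage of $\Bun$ under the forgetful morphism intersected with the semistable locus), hence it is locally of finite type with affine diagonal; quasicompactness of the semistable locus follows from boundedness of semistable Higgs bundles of fixed rank and degree (Nitsure, \cite{nitsure1991moduli}), so the stack is of finite type. Then I would cite the two propositions immediately above: $\Theta$-completeness and S-completeness of $\HBunrdss$. Finally, applying \cite[Theorem A]{alper2023existence} yields a separated good moduli space.

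Since essentially all the content is contained in the cited results, there is no serious obstacle; the only point requiring a sentence of care is confirming that $\HBunrdss$ satisfies the standing hypotheses of the existence theorem (finite type plus affine diagonal), which is why one wants to record quasicompactness of the semistable locus explicitly.

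\begin{proof}
    The stack $\HBunrdss$ is an open substack of $\HCoh$, hence it is locally of finite type with affine diagonal by Proposition \ref{prop:bunaffdiag}. By boundedness of the family of semistable Higgs bundles of fixed rank $r$ and degree $d$ \cite[Section 3]{nitsure1991moduli}, the semistable locus is quasicompact, so $\HBunrdss$ is of finite type with affine diagonal. By the previous two propositions it is moreover $\Theta$-complete and S-complete. Therefore \cite[Theorem A]{alper2023existence} applies and shows that $\HBunrdss$ admits a separated good moduli space.
\end{proof}
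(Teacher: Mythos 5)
Your proof is correct and follows essentially the same route as the paper: the theorem is a direct assembly of the two preceding propositions ($\Theta$- and S-completeness) via \cite[Theorem A]{alper2023existence}. Your explicit verification of the finite type and affine diagonal hypotheses is a reasonable extra sentence of care that the paper leaves implicit for this stack.
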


Analogously, we can show that the stack of semistable $\text{SL}_n$-bundles $\HBunslss$ admits a good moduli space. The only added difficulty here consists of proving that the trivialisation of the determinant also extends over $\Theta_R$ and $\Phi_R$ in both criteria. This is done analogously to the extension of the isomorphism in the proof of Theorem \ref{th:gmsexists}
\section{Moduli Space of (anti-invariant) Higgs bundles}

Using the established results for the stack of Higgs bundles, we can show that the stacks of anti-invariant Higgs bundles admit good moduli spaces. Because of Lemma \ref{lem:antiinvstab}, we have forgetful morphisms. 
\begin{equation*}
    \HBunantisymss \rightarrow \HBunslss,
\end{equation*}
and 
\begin{equation}
    \HBunantialtss \rightarrow \HBunslss.
\end{equation}

We already know the target stack admits a good moduli space and is thus $\Theta$- and S-complete. We will show that these properties lift along the forgetful morphism. 

\begin{theorem}\label{th:gmsexists}
    The stacks $\HBunantisymss$ and $\HBunantialtss$ admit good moduli spaces, which we will denote by $\gmssym$ and $\gmsalt$. 
\end{theorem}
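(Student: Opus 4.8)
The plan is to invoke Theorem A of \cite{alper2023existence}: since $\HBunantisymss$ and $\HBunantialtss$ are open substacks of the algebraic stacks $\HBunantisym$, $\HBunantialt$, they are locally of finite type with affine diagonal, and by Theorem \ref{theorem:antiinvHiggsstrat} they are quasicompact, hence of finite type. So it suffices to verify $\Theta$-completeness and S-completeness. The strategy mirrors Section 6: we already know the target stack $\HBunslss$ is $\Theta$- and S-complete (having a good moduli space), and the forgetful morphism $\text{Forget}_\psi$ to $\HBunslss$ is affine (as recorded in the diagram of forgetful morphisms at the end of Section 3). Therefore a map from $\Theta_R \setminus 0$ (resp.\ $\Phi_R \setminus 0$) into $\HBunantisymss$ first projects to a map into $\HBunslss$, which extends uniquely over $\Theta_R$ (resp.\ $\Phi_R$) by hypothesis; we must then check that the extra datum, the anti-invariant isomorphism $\psi \colon \sigma^* \mathcal{E} \to \mathcal{E}^*$, also extends over the codimension-two point, and that the resulting filling lands in the semistable locus.

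Concretely, I would argue as follows. For $\Theta$-completeness: a map $\Theta_R \setminus 0 \to \HBunantisymss$ gives a semistable Higgs bundle $\mathcal{E}$ on $C_R$ with a Higgs filtration of $\mathcal{E}_K$ of the type in Proposition \ref{prop:filtonanti-invHiggs} (i.e.\ by $\sigma$-isotropic Higgs subbundles up to $\perp_\sigma$), together with $\psi$. By the $\Theta$-completeness of $\HBunslss$ this filtration extends uniquely to a Higgs filtration of $\mathcal{E}$ over all of $\Theta_R$, and $\gr(\mathcal{E}_\kappa)$ is semistable. The isomorphism $\psi$ is a section over $\Theta_R \setminus 0$ of the affine-over-$\Coh$ space parametrising morphisms $\sigma^*\mathcal{E} \to \mathcal{E}^*$; since $\Coh$ is $\Theta$-complete (Proposition \ref{prop:HCoHcrit}) and this space is affine over it, the section extends over $\Theta_R$, and the extended section is still an isomorphism because being an isomorphism is an open condition and the locus where it fails is closed of codimension $\geq 2$ — but a vector bundle on $C_R$ has no sections supported in codimension $\geq 2$, so $\psi$ extends as an isomorphism (alternatively: the inverse $\psi^{-1}$ likewise extends, and the two compositions, agreeing with the identity away from $0$, agree everywhere). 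The $\sigma$-symmetry (resp.\ $\sigma$-alternating of fixed type $\tau$) and the Higgs-compatibility of Diagram \ref{eq:antiinvHiggs} are closed conditions holding over $\Theta_R \setminus 0$, hence hold over $\Theta_R$. Finally $g(0)$ lands in $\HBunantisymss$: its underlying Higgs bundle is semistable (shown above), and by Proposition \ref{lem:antiinvstab} an anti-invariant Higgs bundle is semistable iff its underlying Higgs bundle is, so $g(0) \in \HBunantisymss$. Uniqueness of the filling follows from uniqueness on the bigger stack and the codimension-two extension argument. For S-completeness the argument is identical, replacing $\Theta_R$ by $\Phi_R = [(\Spec R[x,y]/(xy-\pi))/\Gm]$ and using Lemma \ref{lem:Scomp=oppfilt}: the extension of $\mathcal{E}$ and of its opposite Higgs filtrations comes from S-completeness of $\HBunslss$, the extension of $\psi$ as an isomorphism comes from S-completeness of $\Coh$ together with the codimension-two argument, and membership of $g(0)$ in the semistable locus again follows from Proposition \ref{lem:antiinvstab}.

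The main obstacle, and the only point requiring genuine care, is the extension of $\psi$ across the codimension-two point \emph{as an isomorphism} together with the preservation of its symmetry type: one must be sure that $\psi$ does not degenerate to a non-invertible morphism, and that the discrete invariant $\tau$ (the collection of Pfaffians over the ramification divisor) is locally constant in the family and hence automatically matches. Both follow from the fact that the ambient object is a vector bundle over the two-dimensional regular base $C_R$, so reflexive/torsion-free considerations let sections and their inverses extend uniquely over the missing codimension-two locus; I would phrase this using that $j_* j^* \mathcal{F} = \mathcal{F}$ for $\mathcal{F}$ locally free and $j$ the open immersion of the complement of a codimension-$\geq 2$ closed subset, exactly as in the treatment of the $\SL_n$ case at the end of Section 6. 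Once $\psi$ is known to extend as an isomorphism, everything else is closedness of the defining conditions and an appeal to Proposition \ref{lem:antiinvstab}.
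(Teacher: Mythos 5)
Your proposal follows essentially the same route as the paper: extend the underlying semistable Higgs bundle using $\Theta$-/S-completeness of $\HBunslss$, extend $\psi$ across the codimension-two point by Hartogs, check it remains an isomorphism, and observe that symmetry, type, Higgs-compatibility and equivariance are closed conditions, with semistability of the filling handled by Proposition \ref{lem:antiinvstab}. The only cosmetic difference is in how you see that the extended $\psi$ is invertible (the paper notes the non-isomorphism locus is cut out by a determinant, hence a divisor or empty, while your parenthetical inverse-extension argument is an equally valid substitute for your slightly imprecise ``open condition'' phrasing).
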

\begin{proof}
    We argue that the stack of $\sigma$-symmetric anti-invariant Higgs bundles is $\Theta$-complete. The property of S-completeness and the $\sigma$-alternating case are proven completely analogously.

    So suppose we are given a morphism
    \begin{equation*}
        \Theta_R / 0 \rightarrow \HBunantisymss.
    \end{equation*}
    We can interpret this as a family of $\sigma$-symmetric anti-invariant Higgs bundles $(\mathcal{E}', \varphi', \psi')$ on $(\Spec R[t] / 0) \times C$ equivariant with respect to the $\Gm$-action on $\Spec R[t]$. 

    Because $\HBunslss$ is $\Theta$-complete, it follows that we can extend the underlying semistable Higgs bundle $(\mathcal{E}', \varphi')$ to a $\Gm$-equivariant semistable Higgs bundle $(\mathcal{E}, \varphi)$ on $\Spec R[t] \times C$ which admits an isomorphism $\psi'$ away from $\lbrace 0 \rbrace \times C$. Such an isomorphism is a section of the vector bundle $\mathcal{E}^* \otimes \sigma^* \mathcal{E}^*$. Because $\lbrace 0 \rbrace \times C$ is codimension 2, it follows from algebraic Hartogs' lemma that this extends to an endomorphism 
    \begin{equation}
        \psi : \sigma^* \mathcal{E} \rightarrow \mathcal{E}^*.
    \end{equation}
    Locally, the locus where $\psi$ is not an isomorphism is given by the vanishing of a determinant. Thus, it follows that this locus is either codimension 1 or empty. But we already know that $\psi$ is an isomorphism outside a subset of codimension 2. It follows that $\psi$ is an isomorphism everywhere. Because it is compatible with $\varphi$ on the open subset $(\Spec R[t] / 0) \times C$, it is compatible everywhere. Lastly, it is $\Gm$-equivariant because its restriction to $(\Spec R[t] / 0) \times C$ is $\Gm$-equivariant and $0$ is a fixed point for the action. 

    The anti-invariant bundle $(\mathcal{E}, \varphi, \psi)$ on $\Theta_R \times C$ then corresponds to the unique extension
    \begin{equation*}
        \Theta_R \rightarrow \HBunantisymss.
    \end{equation*}
\end{proof}

\begin{remark}
    Completely analogously to \cite[Theorem 3.12]{alper2022projectivity}, it can be shown that these moduli spaces classify S-equivalence classes of anti-invariant Higgs bundles, for the natural generalisation of S-equivalence to this setting.
\end{remark}

\section{Hitchin system}

Composing the Hitchin system $\HBunsl \rightarrow B$ with the forgetful morphism $\HBunantisym \rightarrow \HBunsl$ (respectively $\HBunantialt \rightarrow \HBunsl$), we get the following morphisms: 

\begin{align*}
    \HBunantisym \rightarrow B \text{ and }
    \HBunantialt \rightarrow B.
\end{align*}

Because good moduli spaces are universal for maps to algebraic spaces by \cite[Theorem 6.6]{alper2013good} we get morphisms 

\begin{equation*}
    \gmssym \rightarrow B \text{ and } \gmsalt \rightarrow B.
\end{equation*}
The goal of this section is to prove that these morphisms are proper. Our strategy will be to reduce this to showing that the entire stack of anti-invariant bundles satisfies the existence part of the valuative criterion for properness. 

In \cite[Corollary 6.18]{alper2023existence} it is shown that if a stack $\mathfrak{X}$ admits a well-ordered $\Theta$-stratification such that the semistable locus $\mathfrak{X}^{ss}$ admits a separated good moduli space $M$, then a morphism $M \rightarrow B$ to a Noetherian algebraic space $B$  is proper if and only if $\mathfrak{X} \rightarrow B$ satisfies the existence part of the valuative criterion for properness. For our situation, this is summarised in the following lemma. 

\begin{lemma}\label{lem:existvalue}
    The morphism $\gmssym \rightarrow B$ (resp. $\gmsalt \rightarrow B$) is proper if and only if $\HBunantisym \rightarrow B$ (resp. $\HBunantialt \rightarrow B$) satisfies the existence part of the valuative criterion of properness. 
\end{lemma}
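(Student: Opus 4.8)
The plan is to deduce the lemma directly from \cite[Corollary 6.18]{alper2023existence}, applied to the stack $\mathfrak{X} = \HBunantisym$ (resp. $\HBunantialt$) together with the morphism $\mathfrak{X} \to B$. For this one must check the hypotheses of that corollary: that $\mathfrak{X}$ carries a well-ordered $\Theta$-stratification whose semistable locus admits a separated good moduli space, and that $B$ is a Noetherian algebraic space.

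The $\Theta$-stratification is supplied by Theorem \ref{theorem:antiinvHiggsstrat}, which moreover identifies the open stratum with the semistable locus $\HBunantisymss$ defined in Section 4.2. By Theorem \ref{th:gmsexists} this locus admits a good moduli space $\gmssym$, and it is separated: the proof of that theorem establishes that $\HBunantisymss$ is both $\Theta$- and S-complete, so separatedness of $\gmssym$ follows from \cite[Theorem A]{alper2023existence}. Finally $B$, being a finite-dimensional affine space over $\C$, is a Noetherian algebraic space. The morphism $\gmssym \to B$ appearing in the statement is precisely the one induced from $\HBunantisym \to B$ by the universal property of good moduli spaces \cite[Theorem 6.6]{alper2013good}, so \cite[Corollary 6.18]{alper2023existence} yields exactly the asserted equivalence between properness of $\gmssym \to B$ and the existence part of the valuative criterion of properness for $\HBunantisym \to B$. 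The $\sigma$-alternating case is handled identically, using the $\HBunantialt$-analogue of Theorem \ref{theorem:antiinvHiggsstrat} and the corresponding half of Theorem \ref{th:gmsexists}.

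I do not expect a genuine obstacle here: the lemma is simply the specialisation of \cite[Corollary 6.18]{alper2023existence} to the present situation, and all the geometric input — the stratification, the quasicompactness of the semistable locus, and the completeness properties producing a separated good moduli space — has already been assembled in the preceding sections. The only points worth spelling out are the passage from the $\Theta$- and S-completeness of $\HBunantisymss$ to the separatedness of $\gmssym$, and the remark that the morphism $\gmssym \to B$ in the statement coincides with the induced morphism on good moduli spaces.
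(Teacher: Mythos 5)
Your proposal is correct and follows essentially the same route as the paper: the lemma is stated there precisely as the specialisation of \cite[Corollary 6.18]{alper2023existence} to $\HBunantisym \to B$, with the hypotheses supplied by Theorem \ref{theorem:antiinvHiggsstrat} (the well-ordered $\Theta$-stratification with quasicompact semistable locus) and Theorem \ref{th:gmsexists} together with \cite[Theorem A]{alper2023existence} (the separated good moduli space). Your additional remarks on separatedness and on the compatibility of $\gmssym \to B$ with the induced morphism on good moduli spaces are exactly the points the paper leaves implicit.
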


\begin{theorem}\label{th:Hitchinproper}
    The morphism $\gmssym \rightarrow B$ (resp. $\gmsalt \rightarrow B$) is proper. 
\end{theorem}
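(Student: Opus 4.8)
The plan is to apply Lemma~\ref{lem:existvalue} and thereby reduce the theorem to checking that $\HBunantisym \to B$ (resp.\ $\HBunantialt \to B$) satisfies the existence part of the valuative criterion of properness; the two cases being identical, I treat the $\sigma$-symmetric one. So let $R$ be a DVR with fraction field $K$ and residue field $\kappa$, let $\eta_K \in \HBunantisym(K)$ be a $\sigma$-symmetric anti-invariant Higgs bundle $(E_K,\varphi_K,\psi_K,\delta_K)$ on $C_K$, and let $b \in B(R)$ have $b|_K$ equal to the Hitchin image of $(E_K,\varphi_K)$. After a finite extension of $R$ we must produce $\eta \in \HBunantisym(R)$ restricting to $\eta_K$ and lying over $b$. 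First I would extend the underlying $\mathrm{SL}_n$-Higgs bundle: composing $\eta_K$ with the forgetful morphism to $\HBunsl$ and invoking the classical properness of the $\mathrm{SL}_n$-Hitchin map --- equivalently, applying \cite[Corollary~6.18]{alper2023existence} to the Harder--Narasimhan $\Theta$-stratification of $\HBunsl$ together with the separated good moduli space of $\HBunslss$ (cf.\ Theorem~\ref{th:gmsH}) --- the morphism $\HBunsl \to B$ satisfies the existence part of the valuative criterion, so after a finite extension of $R$ we obtain $(E,\varphi,\delta) \in \HBunsl(R)$ restricting to $(E_K,\varphi_K,\delta_K)$ and lying over $b$.

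The remaining task is to extend the anti-invariant structure, i.e.\ to promote $(E,\varphi,\delta)$ to a point of $\HBunantisym(R)$ by extending $\psi_K \colon \sigma^*E_K \xrightarrow{\ \sim\ } E_K^*$ to an isomorphism $\psi \colon \sigma^*E \to E^*$ over $C_R$. The module $\Gamma\big(C_R,\mathcal{H}om(\sigma^*E,E^*)\big)$ is finitely generated, torsion-free and hence free over $R$, with generic fibre $\operatorname{Hom}(\sigma^*E_K,E_K^*)$, so after rescaling by an appropriate power of a uniformizer $\pi$ we may take $\psi \in \Gamma\big(C_R,\mathcal{H}om(\sigma^*E,E^*)\big)$ not divisible by $\pi$. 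Since $E$, and hence $\sigma^*E$ and $E^*$, has trivial determinant, $\det\psi$ is a section of $\O_{C_R}$, that is an element of $R$, and it is nonzero as $\psi_K$ is an isomorphism. If $\det\psi \in R^\times$ then $\psi$ is an isomorphism; its $\sigma$-symmetry and its compatibility with $\varphi$ then hold over all of $C_R$ because they hold over the schematically dense open $C_K$, and $(E,\varphi,\delta,\psi)$ is the required extension over $b$.

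If instead $\det\psi = u\pi^k$ with $u \in R^\times$ and $k>0$, the restriction $\psi|_{C_\kappa}$ is a nonzero, non-injective morphism of $\mathrm{SL}_n$-Higgs bundles on the special fibre, so the extension of the underlying Higgs bundle chosen above was ``the wrong one'' and must be modified. Here I would run the classical semistable-reduction argument for Hitchin-type maps (in the style of Langton, Faltings and Nitsure): replace $(E,\varphi)$ by the Hecke modification of $E$ along $C_\kappa$ determined by the degeneracy of $\psi|_{C_\kappa}$ --- which is $\varphi$-stable because $\psi$ is $\varphi$-compatible --- obtaining another $R$-flat family of Higgs bundles, still restricting to $(E_K,\varphi_K)$ over $K$ and lying over $b$, and whose determinant is again trivializable after twisting by a line bundle pulled back from $\Spec R$ (which one reabsorbs into $\delta$). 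One then checks that a suitable non-negative integer invariant --- the order of vanishing of $\det\psi$, or the length of $\operatorname{coker}(\psi|_{C_\kappa})$ --- strictly decreases under this operation, so that after finitely many modifications $\det\psi$ is a unit and the previous paragraph applies. When $\eta_K$ is unstable one can alternatively first extend the unique Harder--Narasimhan filtration via Proposition~\ref{prop:HNspec} and Lemma~\ref{lem:Hnfiltcriteria} and reduce to its associated graded, whose factors are semistable Higgs bundles and semistable anti-invariant Higgs bundles with good moduli spaces already known to be proper over $B$ by Theorem~\ref{th:gmsexists} and the $\mathrm{SL}_n$-analogue of Theorem~\ref{th:gmsH}. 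The $\sigma$-alternating case goes through word for word using the type-$\tau$ versions of Lemma~\ref{lem:Hnfiltcriteria}, Theorem~\ref{theorem:antiinvHiggsstrat} and Theorem~\ref{th:gmsexists}.

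I expect the main obstacle to be this last step. Unlike in the proof of Theorem~\ref{th:gmsexists}, where the locus to be filled in was of codimension two and a Hartogs-type extension sufficed, here the special fibre $C_\kappa \subset C_R$ is a divisor; so one cannot simply extend $\psi$ by hand, and must instead modify the chosen extension of $(E,\varphi)$ by a sequence of Hecke modifications, verifying that these stay compatible with $\varphi$, preserve (after twisting) the trivialization of the determinant, and terminate.
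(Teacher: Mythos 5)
Your reduction via Lemma~\ref{lem:existvalue} and your first step --- extending the underlying $\mathrm{SL}_n$-Higgs bundle over $\Spec R$ using the existence part of the valuative criterion for $\HBunsl \rightarrow B$ --- coincide with the paper's proof. After that the two arguments diverge, and your version has a genuine gap exactly where you suspect it. Having fixed an extension $(E,\varphi)$ of the Higgs bundle, you extend $\psi$ as a section of $\mathcal{H}om(\sigma^*E,E^*)$ and, when $\det\psi$ vanishes on the special fibre, you propose a Langton-style sequence of Hecke modifications along $C_\kappa$ with the assertion that $\mathrm{ord}_\pi(\det\psi)$ (or the length of the cokernel of $\psi|_{C_\kappa}$) strictly decreases. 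That assertion is the entire content of the argument and is not verified: a modification $E' \subset E$ along $\ker(\psi|_{C_\kappa})$ shrinks $\sigma^*E$ but \emph{enlarges} $E^*$, so the effect on $\det\psi$ after rescaling is not obviously a strict decrease, and Langton-type processes are well known to require a delicate two-stage analysis to rule out cycling. In effect you are re-proving from scratch the semistable reduction statement for the parahoric torsors underlying anti-invariant bundles (equivalently, the existence of a self-dual lattice after modification, i.e.\ the ind-properness of the relevant affine Grassmannian), which is a real theorem, not a routine verification. Your fallback paragraph for the unstable case does not repair this: Lemma~\ref{lem:existvalue} requires the valuative criterion for the \emph{whole} stack $\HBunantisym$, and passing to the associated graded of the HN filtration does not produce a lift of the original $K$-point.

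The paper avoids this entirely. It never tries to extend $\psi$ over a prescribed extension of the Higgs bundle; instead it produces a second, independent lift $\Spec R \rightarrow \Bunantisym$ of the underlying anti-invariant bundle by quoting the existence part of the valuative criterion for stacks of parahoric torsors (\cite[Proposition 3.3]{heinloth2018hilbert}), combines the two lifts into a map to the fibre product $\HBunsl \times_{\Bunsl} \Bunantisym$, and observes that $\HBunantisym$ is the closed substack where the compatibility of Diagram~\ref{eq:antiinvHiggs} holds, so the lift lands there because it does so generically. If you want to complete your route, the missing ingredient is precisely a proof of termination of your modification process (or a citation of a semistable-reduction theorem for $\mathrm{SO}$/$\mathrm{Sp}$-type parahoric torsors); as written, the step ``one then checks that a suitable invariant strictly decreases'' cannot be accepted.
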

\begin{proof}
We show the theorem for the $\sigma$-symmetric case. The $\sigma$-alternating case is completely analogous.
    By Lemma \ref{lem:existvalue} it suffices to show that $\HBunantisym \rightarrow B$ satisfies the existence part of the valuative criterion of properness. Let $R$ be a DVR with fraction field $K$. We are looking for a lift $\Spec R \rightarrow \HBunantisym$ in the following diagram
\[\begin{tikzcd}
	{\Spec K} && \HBunantisym \\
	\\
	{\Spec R} && B
	\arrow[from=1-1, to=1-3]
	\arrow[hook, from=1-1, to=3-1]
	\arrow[from=1-3, to=3-3]
	\arrow[from=3-1, to=3-3]
\end{tikzcd}\]
Using our various forgetful morphisms, we get a diagram 

\[\begin{tikzcd}
	{\Spec K} && \HBunantisym &&& \HBunsl \\
	\\
	&& \Bunantisym \\
	{\Spec R} &&&&& B
	\arrow[from=1-1, to=1-3]
	\arrow[from=1-3, to=1-6]
	\arrow[from=1-6, to=4-6]
	\arrow[from=1-3, to=3-3]
	\arrow[hook, from=1-1, to=4-1]
	\arrow[from=4-1, to=4-6]
\end{tikzcd}\]

In \cite[Proposition 3.3]{heinloth2018hilbert} it is shown that if $\mathcal{G}$ is a parahoric group scheme on a curve $C'$ and $\text{Bun}_\mathcal{G}$ is the stack of torsors for $\mathcal{G}$ on $C'$, then this stack satisfies the existence part of the valuative criterion. The stack $\Bunantisym$ is of this form for a certain group scheme on $C / \sigma$, so it also satisfies the existence part of the valuative criterion. So we get a lift $\Spec R \rightarrow \Bunantisym$ in the above diagram. 

Similarly, it is shown in \cite[Lemma 6.20]{alper2023existence} that the morphism $\HBunsl \rightarrow B$ satisfies the existence part of the valuative criterion. Hence, there is also a lift $\Spec R \rightarrow \HBunsl$. 

Combining these, we get a lift $\Spec R \rightarrow \HBunsl \times_{\Bunsl} \Bunantisym$. This gives us the following diagram
 
\[\begin{tikzcd}
	{\Spec K} &&& \HBunantisym \\
	\\
	&&& {\HBunsl \times_{\Bunsl} \Bunantisym} \\
	\\
	{\Spec R} &&& B. 
	\arrow[from=1-1, to=1-4]
	\arrow[from=1-4, to=3-4]
	\arrow[from=3-4, to=5-4]
	\arrow[from=5-1, to=5-4]
	\arrow[from=5-1, to=3-4]
	\arrow[from=1-1, to=5-1]
\end{tikzcd}\]

The stack $\HBunsl \times_{\Bunsl} \Bunantisym$ parametrises triples $(E, \varphi, \psi)$ consisting of a Higgs bundle $(E, \varphi)$ and an isomorphism $\psi \colon \sigma^*E \rightarrow E^*$, but they need not satisfy the compatibility condition of Equation \ref{eq:antiinvHiggs}. This compatibility condition cuts out a closed substack of $\HBunsl \times_{\Bunsl} \Bunantisym$, and the morphism $\HBunantisym \rightarrow \HBunsl \times_{\Bunsl} \Bunantisym$ is the corresponding closed immersion. It follows that the map $\Spec R \rightarrow \HBunsl \times_{\Bunsl} \Bunantisym$ lands in $\HBunantisym$ as it does so generically, and this is the desired lift of the map $\Spec R \rightarrow B$.
\end{proof}

\begin{remark}
    A description of the image of these Hitchin systems can be found in \cite[Section 5]{hacen2022hitchin}.
\end{remark}

\bibliography{References}
\bibliographystyle{alpha}

\end{document}